\newcommand{\punto}{\,\cdot\,}
\newcommand{\ds}{\displaystyle}
\newcommand{\smallfrac}[2]{{\textstyle\frac{#1}{#2}}} 
\newcommand{\jump}[1]{[\![#1]\!]}
\newcommand{\triple}[1]{|\!|\!|#1|\!|\!|}
\newcommand{\bs}{\boldsymbol}
\newtheorem{proposition}{Proposition}[section]
\newtheorem{corollary}[proposition]{Corollary}
\newtheorem{lemma}[proposition]{Lemma}
\numberwithin{equation}{section}
\title{Boundary integral solvers\\ for an evolutionary exterior Stokes problem}
\date{\today}
\author{Constantin Bacuta, Matthew E. Hassell, George C. Hsiao,\\
 \& Francisco--Javier Sayas\footnote{FJS and MH partially funded by NSF grant DMS 1216356.}  \\
Department of Mathematical Sciences, University of Delaware, USA\\
{\tt \{bacuta,mhassell,hsiao,fjsayas\}@udel.edu }}
\begin{document}

\maketitle

\begin{abstract}
This paper proposes and analyzes a full discretization of the exterior transient Stokes problem with Dirichlet boundary conditions. The method is based on a single layer boundary integral representation, using Galerkin semidiscretization in the space variables and multistep Convolution Quadrature in time. Convergence estimates are based on a Laplace domain analysis, which translates into a detailed study of the exterior Brinkman problem. Some numerical experiments are provided.\\
{\bf AMS Subject classification.} 65R20, 65M38\\
{\bf Keywords.} exterior Stokes problem, convolution quadrature, boundary element method
\end{abstract}

\section{Introduction}

In this paper we propose a fully discrete method based on an integral equation for the exterior Stokes problem with Dirichlet boundary conditions in two or three dimensions. The integral equation is based on a single layer potential representation of the velocity field. The numerical discretization uses a general Galerkin scheme for semidiscretization in space and Convolution Quadrature \cite{Lubich:1988} for discretization in time. The analysis is carried out by combining ideas of Laplace domain analysis of integral operators \cite{LuSc:1992} with the transformation of the Galerkin-BEM discretization in space into a set of exotic transmission conditions \cite{LaSa:2009}. As part of the paper, we include a novel analysis of the single layer potential and operator for the Stokes resolvent equations (the Brinkman equations) on a general Lipschitz domain. 

The literature on numerical methods for integral representations of parabolic problems has focused extensively on the heat equation. Most theoretical results are based on the single-layer representation, leading to a Volterr\`a-Fredholm integral equation that can be formally considered to be of the first kind. (We note that the mapping properties of the integral operators make the integral equations of the second kind for parabolic problems not to be a smooth perturbation of the identity, due to the mapping properties in the time variable.  Additional complications arise when the boundary is not smooth.) This analysis was sparked by the work of Arnold and Noon \cite{ArNo:1989} and Costabel \cite{Costabel:1990}, with some sequels as \cite{HsSa:1991}. The work of Lubich and Schneider \cite{LuSc:1992} offered a numerical treatment of the heat equation single layer operator equation. Other formulations, including fast multiplication techniques, appear in recent work of Tausch \cite{Tausch:2007, Tausch:2009, MeScTa:2014}. The mathematical literature for the unsteady exterior Stokes problem using integral equations seems to be quite limited: see, for instance, \cite{HeHs:1993}, \cite{HeHs:2007}. A general overview of the state of the art of time domain integral equations one decade ago can be found in \cite{Costabel:2004}.

For our analysis we will rely on properties of the Brinkman single layer potential. We will however take a different approach than the one given in \cite{KoPo:2004, Kohr:2007, KoWe:2009}, since we need to study the behavior of all the bounds as functions of the parameter in the Brinkman model. We will adopt a Laplace domain approach similar to the one used in \cite{BaHa:1986} for the wave equation. For some technical issues, we will rely on recent results on the Stokes potentials on general Lipschitz domains \cite{SaSe:2014}. The passage to the time-domain will be done with a modification of a result in \cite{LuSc:1992}. Following \cite{LaSa:2009} we will analyze the semidiscretization in space in a systematic way, showing that a postprocessed solution (the velocity field) can have better properties than the preprocessed solution (the boundary density and, therefore, the pressure field, which is postprocessed with a steady-state operator). Finally, we will apply a general multistep-based Convolution Quadrature strategy and analyze it using the results in \cite{Lubich:1988}. We note that this final step will be the only one where we will not be able to analyze how the constants that appear in the error estimates depend on time (as the latter grows to infinity).

The paper starts with two long sections (Sections \ref{sec:2} and \ref{sec:3}) presenting the integral and variational forms of the single-layer potential for the Brinkman problem and providing bounds in terms of the parameter of the Brinkman equation. In Section \ref{sec:4} we transfer the Laplace domain estimates to estimates for the transient single layer potential for the Stokes equation, using a technical result that is proved in Appendix \ref{app:A}. In Section \ref{sec:5} we introduce and analyze a general Galerkin semidiscretization in space of the integral equation. We provide bounds for the semidiscretization in space (Galerkin error operator) plus some stability bounds (Galerkin solver) that are needed for the analysis of the fully discrete method. In Section \ref{sec:6} we present and analyze the fully discrete scheme and show some numerical experiments. Finally, Appendix \ref{sec:AppB} shows an alternative integral formulation that can be used to eliminate some inconvenient Lagrange multipliers that are needed to impose conformity restrictions in the boundary element space.

\paragraph{Foreword on background and notation.} We will use basic properties of Sobolev spaces on bounded Lipschitz domains and on their boundaries \cite{AdFo:2003}. All aspects related to integral operators will be proved using variational techniques \cite{SaSe:2014}. The passage to the time-domain requires the momentary use of basic vector-valued distribution theory. It is important to remark that {\em all brackets will be taken to be bilinear}, even if they are employed in the context of complex-valued functions. In particular, for scalar fields complex-valued $u,v$, vector fields $\mathbf u,\mathbf v$ and matrix-valued fields (tensors) $\mathrm U,\mathrm V$, and an open set $\mathcal O\subset\mathbb R^d$, we will denote
\[
(u,v)_{\mathcal O}:=\int_{\mathcal O} u\,v
\qquad
(\mathbf u,\mathbf v)_{\mathcal O}:=\int_{\mathcal O} \mathbf u\cdot\mathbf v
\qquad
(\mathrm U,\mathrm V)_{\mathcal O}:=\int_{\mathcal O} \mathrm U:\mathrm V
\]
where $\mathrm U:\mathrm V:=\sum_{i,j} \mathrm U_{ij}\mathrm V_{ij}$. Given a Hilbert space $X$, we will write $\mathbf X:=X^d$ and immediately assume it to be endowed with the product topology. 

\section{The Brinkman single layer potential}\label{sec:2}

In this section we present the variational theory for the Brinkman single layer potential as a holomorphic function of its parameter. This is equivalent to studying the single layer potential associated to the resolvent Stokes problem. In all the following arguments, the parameter $s$ is a complex number not in the negative real axis
\[
s\in \mathbb C_\star:=\mathbb C\setminus (-\infty,0].
\]
The space of solenoidal vector fields
\begin{equation}\label{eq:N2.1}
\widehat{\mathbf V}(\mathbb R^d):=\{\mathbf u\in \mathbf H^1(\mathbb R^d)\,:\, \mathrm{div}\,\mathbf u=0\}
\end{equation}
will also play a key role. The geometric setting is as follows: we consider a bounded Lipschitz domain $\Omega_-$, with connected boundary $\Gamma$, and the associated unbounded exterior domain $\Omega_+:=\mathbb R^d\setminus\overline{\Omega_-}$. The superindices $\pm$ will be used to refer to limits/traces from $\Omega_\pm$. We will use the angled bracket $\langle\,\cdot\,,\,\cdot\,\rangle_\Gamma$ to denote the $L^2(\Gamma)$ and $\mathbf L^2(\Gamma)$ inner products (with the above convention on not conjugating any component) as well as its extension to duality products between the spaces $H^{\pm1/2}(\Gamma)$, as well as between their vector-valued counterparts.

\paragraph{Jumps of traces and normal stresses}
The jump of the trace across $\Gamma$, for a locally $\mathbf H^1$ function, is defined as
$
\jump{\gamma\mathbf v}:=\gamma^-\mathbf v-\gamma\mathbf v^+.
$
Let now $\mathbf u\in \mathbf H^1(\mathbb R^d\setminus\Gamma)$ and $p\in L^2(\mathbb R^d\setminus\Gamma$), be such that
\[
\mathbf f:=-2\nu\mathrm{div}\,\bs\varepsilon(\mathbf u)+\nabla p \in L^2(\mathbb R^d\setminus\Gamma), 
\qquad \bs\varepsilon(\mathbf u):=\smallfrac12(\mathrm D\mathbf u+(\mathrm D\mathbf u)^\top).
\]
We can then define the functionals $\mathbf t^\pm(\mathbf u,p)\in \mathbf H^{-1/2}(\Gamma)$ given by the relations:
\begin{eqnarray*}
\langle \mathbf t^-(\mathbf u,p),\gamma\mathbf v\rangle_\Gamma &:=& 2\nu \left( \boldsymbol\varepsilon(\mathbf u),\boldsymbol\varepsilon(\mathbf v)\right)_{\Omega_-}-(p,\mathrm{div}\,\mathbf v)_{\Omega_-}
-(\mathbf f,\mathbf v)_{\Omega_-}\qquad \forall \mathbf v\in \mathbf H^1(\Omega_-),\\
\langle \mathbf t^+(\mathbf u,p),\gamma\mathbf v\rangle_\Gamma &:=& -2\nu \left( \boldsymbol\varepsilon(\mathbf u),\boldsymbol\varepsilon(\mathbf v)\right)_{\Omega_+}+(p,\mathrm{div}\,\mathbf v)_{\Omega_+}
+(\mathbf f,\mathbf v)_{\Omega_+}
\qquad \forall \mathbf v\in \mathbf H^1(\Omega_+).
\end{eqnarray*}
We can thus define the jump of the normal stress
$
\jump{\mathbf t(\mathbf u,p)}:=\mathbf t^-(\mathbf u,p)-\mathbf t^+(\mathbf u,p).
$
In spite of the global definitions of $\mathbf t^\pm(\mathbf u,p)$ using test functions in $\mathbf H^1(\Omega_\pm)$, it is clear that these operators have a local behavior and can be extended to pairs $(\mathbf u,p)$ that only exhibit the required properties in a neighborhood of the boundary. This subtle distinction will be employed in the two dimensional case, where the pressure $p$ is only locally in $L^2$. In particular we will use the formula
\begin{equation}\label{eq:N2.2}
\langle \jump{\mathbf t(\mathbf u,p)},\gamma\mathbf v\rangle_\Gamma=a(\mathbf u,\mathbf v)-(p,\mathrm{div}\,\mathbf v)_{\mathbb R^d}+(-2\nu\mathrm{div}\,\bs\varepsilon(\mathbf u)+\nabla p,\mathbf v)_{\mathbb R^d\setminus\Gamma}
\qquad\forall \mathbf v\in \mathcal D(\mathbb R^d)^d,
\end{equation}
where
\[
a(\mathbf u,\mathbf v):=2\nu \left( \boldsymbol\varepsilon(\mathbf u),\boldsymbol\varepsilon(\mathbf v)\right)_{\mathbb R^d}
\]
and $\mathcal D(\mathbb R^d)$ is the set of $\mathcal C^\infty$ compactly supported functions. 

\subsection{Integral forms}

The following definitions can be found in \cite{Kohr:2007}, \cite{KoWe:2009}, \cite[p.81]{KoPo:2004}

\paragraph{The pressure  potential.} For a given density $\boldsymbol\lambda\in \mathbf H^{-1/2}(\Gamma)$, we define
\[
(\mathrm S_p\boldsymbol\lambda)(\mathbf z):=\langle 
	\mathbf e_p(\mathbf z-\cdot),\boldsymbol\lambda \rangle_\Gamma, \qquad \mathbf z\in \mathbb R^d\setminus\Gamma,
\]
where
\[
\mathbf e_p(\mathbf r):=\frac1{2(d-1)\pi}\frac1{r^d}\mathbf r
\]	
is the negative gradient of the fundamental solution to the Laplace equation. The behavior at infinity of $\mathbf e_p$ gives different properties for the operator $\mathrm S_p$ in two and three dimensions. In the two dimensional case, the
closed subspace
\[
\mathbf H^{-1/2}_0(\Gamma):=\{\boldsymbol\lambda \in \mathbf H^{-1/2}(\Gamma)\,:\, \langle\boldsymbol\lambda,\mathbf a \rangle_\Gamma=0\quad\forall \mathbf a \in \boldsymbol P_0(\Gamma)\}=\boldsymbol P_0(\Gamma)^\circ
\]
plays an important role. As a simple fact that this pressure part of the single layer potential is the same for the Brinkman as for the Stokes problems we can show the following result \cite[Propositions 5.2 and 7.2]{SaSe:2014}:
\begin{proposition}\label{prop:2.1}
\
\begin{itemize}
\item[{\rm (a)}] When $d=3$, $\mathrm S_p:\mathbf H^{-1/2}(\Gamma) \to L^2(\mathbb R^3)$ is bounded.
\item[{\rm (b)}] When $d=2$, $\mathrm S_p:\mathbf H^{-1/2}_0(\Gamma) \to L^2(\mathbb R^2)$ is bounded.
\end{itemize}
\end{proposition}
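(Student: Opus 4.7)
The plan is to recognize $\mathrm S_p\boldsymbol\lambda$ as the divergence of a componentwise Laplace single layer potential and reduce the claim to the standard mapping properties of the latter on Beppo--Levi spaces. A direct computation shows that $\mathbf e_p(\mathbf r)=-\nabla_{\mathbf r}\Phi(\mathbf r)$, where $\Phi$ is the fundamental solution of $-\Delta$ in $\mathbb R^d$ ($\Phi(\mathbf r)=\frac{1}{4\pi|\mathbf r|}$ for $d=3$ and $\Phi(\mathbf r)=-\frac{1}{2\pi}\log|\mathbf r|$ for $d=2$). Since the kernel is smooth off $\Gamma$, differentiating inside the duality bracket yields, for $\mathbf z\in\mathbb R^d\setminus\Gamma$,
\[
(\mathrm S_p\boldsymbol\lambda)(\mathbf z)=-\mathrm{div}_{\mathbf z}(\mathbf V_L\boldsymbol\lambda)(\mathbf z),\qquad (\mathbf V_L\boldsymbol\lambda)_i(\mathbf z):=\langle\Phi(\mathbf z-\punto),\lambda_i\rangle_\Gamma,
\]
where $V_L$ is the scalar Laplace single layer potential, extended componentwise to $\mathbf V_L$. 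The proposition then reduces to bounding $\|\nabla V_L\lambda\|_{\mathbf L^2(\mathbb R^d)}$ in terms of the $H^{-1/2}(\Gamma)$ norm of each scalar component $\lambda$.

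For $d=3$, I would appeal to the standard variational theory for $V_L$ on Lipschitz domains: $V_L\lambda$ is characterized as the minimizer of $\int_{\mathbb R^3}|\nabla u|^2$ over the Beppo--Levi space $\dot H^1(\mathbb R^3)$ subject to the conormal jump condition $\jump{\partial_\nu u}=-\lambda$, which delivers the bounded map $V_L:H^{-1/2}(\Gamma)\to\dot H^1(\mathbb R^3)$. Since elements of $\dot H^1(\mathbb R^3)$ decay like $|\mathbf z|^{-1}$ at infinity, their gradients belong to $L^2(\mathbb R^3)$; applying the divergence componentwise proves (a).

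For $d=2$ the principal obstacle is the logarithmic far-field behavior of $V_L$: the expansion $V_L\lambda(\mathbf z)=-\frac{1}{2\pi}\log|\mathbf z|\,\langle\lambda,1\rangle_\Gamma+O(|\mathbf z|^{-1})$ shows that $\nabla V_L\lambda\notin L^2(\mathbb R^2)$ for generic $\lambda$. Restricting to $\mathbf H^{-1/2}_0(\Gamma)=\mathbf P_0(\Gamma)^\circ$ forces each scalar component to annihilate constants, cancelling the leading logarithm and leaving $\nabla V_L\lambda_i=O(|\mathbf z|^{-2})\in L^2(\mathbb R^2)$. Quantitatively, I would work in the quotient Beppo--Levi space $\dot H^1(\mathbb R^2)/\mathbb R$, in which $V_L:H^{-1/2}_0(\Gamma)\to\dot H^1(\mathbb R^2)/\mathbb R$ is continuous; the divergence kills the constant equivalence class and yields (b). The careful tracking of this orthogonality condition and its associated quotient is the main technical delicacy, and is the reason the two dimensional case must be singled out.
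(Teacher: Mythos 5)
Your argument is correct, but it follows a different route from the paper. The paper does not reprove the mapping property at all: it observes that the pressure part of the Brinkman single layer potential is literally the same operator as for the steady Stokes system, and then invokes the variational/integral theory for Stokes layer potentials on Lipschitz domains developed in the cited reference (Propositions 5.2 and 7.2 of \cite{SaSe:2014}), where exactly these $L^2(\mathbb R^3)$ and, under the condition $\boldsymbol\lambda\in\mathbf H^{-1/2}_0(\Gamma)$, $L^2(\mathbb R^2)$ bounds are established. You instead reduce everything to the scalar Laplace single layer potential through the identity $\mathrm S_p\boldsymbol\lambda=-\mathrm{div}\,\mathbf V_L\boldsymbol\lambda$ (valid since $\mathbf e_p=-\nabla\Phi$ and the kernel is smooth off $\Gamma$), and then use the classical Beppo--Levi (weighted Sobolev) theory for $V_L$: boundedness of $V_L:H^{-1/2}(\Gamma)\to\dot H^1(\mathbb R^3)$ in three dimensions, and well-posedness in the quotient $\dot H^1(\mathbb R^2)/\mathbb R$ for mean-zero scalar densities in two dimensions, the mean-zero condition on each component being exactly what membership in $\mathbf H^{-1/2}_0(\Gamma)=\boldsymbol P_0(\Gamma)^\circ$ provides. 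This buys a more elementary, essentially self-contained proof resting only on standard Laplace potential theory, whereas the paper's citation keeps the whole development inside the unified Stokes framework of \cite{SaSe:2014} that it reuses elsewhere (e.g.\ for Proposition \ref{prop:2.2} and Lemmas \ref{lemma:4.1}--\ref{lemma:4.2}). Two small remarks: your sentence justifying (a) by ``decay like $|\mathbf z|^{-1}$'' is circular --- elements of $\dot H^1(\mathbb R^3)$ have $L^2$ gradients by definition, and that is all you need once $V_L$ is bounded into that space; and in (b) the quantitative bound should be attributed to the coercivity of the Dirichlet form on the quotient space (equivalently, the variational characterization of $V_L$ for compatible data), with the $O(|\mathbf z|^{-2})$ far-field expansion serving only as the heuristic for why the restriction to $\mathbf H^{-1/2}_0(\Gamma)$ is needed.
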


\paragraph{First order asymptotics in the two dimensional case.}
We note that the zero integral  condition in the definition of the space $\mathbf H^{-1/2}_0(\Gamma)$ only affects the behavior at infinity of $\mathrm S_p\boldsymbol\lambda$. We therefore explore the first order asymptotics at infinity of $\mathrm S_p\boldsymbol\lambda$ for general $\boldsymbol\lambda$.
Expanding the kernel function $\mathbf e_p$, we can write
\[
(\mathrm S_p\boldsymbol\lambda)(\mathbf z)=\frac1{2\pi}\frac1{1+|\mathbf z|^2}\langle \mathbf z , \bs\lambda\rangle_\Gamma + \mathcal O(|\mathbf z|^{-2}), \qquad \mbox{as $|\mathbf z|\to \infty$.}
\]
Therefore, apart from the leading term, the pressure potential is in $L^2(\mathbb R^2)$. Let then
\begin{equation}\label{eq:2.9}
\mathbf p_\infty(\mathbf x):= \frac1{2\pi}\frac1{1+|\mathbf x|^2}\,\mathbf x, \qquad (\mathrm D \mathbf p_\infty)(\mathbf x)=\frac1{2\pi}\frac1{1+|\mathbf x|^2} \Big( \mathrm I -\frac2{1+|\mathbf x|^2} \mathbf x\otimes\mathbf x\Big),
\end{equation}
\begin{equation}\label{eq:2.10}
\mathbf g_\ell:=(\mathrm D\mathbf p_\infty)\mathbf e_\ell=\nabla (\mathbf p_\infty\cdot\mathbf e_\ell), \qquad \jmath_\ell(\boldsymbol\lambda):=\langle\boldsymbol\lambda,\mathbf e_\ell\rangle_\Gamma \qquad \ell\in \{1,2\}.
\end{equation}
Here $\{\mathbf e_1,\mathbf e_2\}$ is the canonical basis for $\mathbb R^2.$
This leads to the proof of the following result.

\begin{proposition}\label{prop:2.2}
When $d=2$,
$
\mathrm S_p-\sum_{\ell=1}^2 (\mathbf p_\infty\cdot\mathbf e_\ell)\jmath_\ell:\mathbf H^{-1/2}(\Gamma) \to \mathbf L^2(\mathbb R^2)
$
is bounded.
\end{proposition}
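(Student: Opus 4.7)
The first step is to package the correction term as a single duality bracket. Since $\mathbf p_\infty(\mathbf z)=\sum_{\ell=1}^2(\mathbf p_\infty(\mathbf z)\cdot\mathbf e_\ell)\,\mathbf e_\ell$ and $\jmath_\ell(\boldsymbol\lambda)=\langle\boldsymbol\lambda,\mathbf e_\ell\rangle_\Gamma$, a direct rearrangement yields
\[
u(\mathbf z)\;:=\;(\mathrm S_p\boldsymbol\lambda)(\mathbf z)-\sum_{\ell=1}^2(\mathbf p_\infty\cdot\mathbf e_\ell)(\mathbf z)\,\jmath_\ell(\boldsymbol\lambda)\;=\;\langle\mathbf e_p(\mathbf z-\punto)-\mathbf p_\infty(\mathbf z),\boldsymbol\lambda\rangle_\Gamma,
\]
where $\mathbf p_\infty(\mathbf z)$ inside the bracket is viewed as a $\mathbf y$-independent element of $\mathbf H^{1/2}(\Gamma)$. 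The plan is to fix a ball $B_R\supset\overline{\Omega_-}$ and a cutoff $\chi\in\mathcal C^\infty_c(\mathbb R^2)$ with $\chi\equiv 1$ on $B_R$, and to treat $\chi u$ and $(1-\chi)u$ separately.

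For the near field, one only needs $L^2_{\mathrm{loc}}(\mathbb R^2)$ control on each summand of $u$. The correction $\sum_\ell(\mathbf p_\infty\cdot\mathbf e_\ell)\jmath_\ell(\boldsymbol\lambda)$ is clearly in $L^2_{\mathrm{loc}}$ since $\mathbf p_\infty$ is smooth and bounded on compact sets and $\jmath_\ell$ is continuous on $\mathbf H^{-1/2}(\Gamma)$. For $\mathrm S_p\boldsymbol\lambda$ itself, the identity $\mathbf e_p=-\nabla\Phi$ (with $\Phi$ the two-dimensional Laplace fundamental solution) lets us factor $\mathrm S_p\boldsymbol\lambda=-\nabla\punto(\mathcal V\boldsymbol\lambda)$, where $\mathcal V\boldsymbol\lambda(\mathbf z):=\langle\Phi(\mathbf z-\punto),\boldsymbol\lambda\rangle_\Gamma$ is the componentwise Laplace single-layer potential of $\boldsymbol\lambda$. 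Standard mapping properties of this operator (see \cite{SaSe:2014}) give $\mathcal V:\mathbf H^{-1/2}(\Gamma)\to\mathbf H^1_{\mathrm{loc}}(\mathbb R^2)$ boundedly, so $\chi\,\mathrm S_p\boldsymbol\lambda\in L^2(\mathbb R^2)$ with the expected norm bound.

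For the far field $\{|\mathbf z|\ge R\}$, the key is to show that
\[
\|\mathbf e_p(\mathbf z-\punto)-\mathbf p_\infty(\mathbf z)\|_{\mathbf H^{1/2}(\Gamma)}=\mathcal O(|\mathbf z|^{-2})\qquad\text{as }|\mathbf z|\to\infty,
\]
uniformly. A Taylor expansion of $\mathbf e_p(\mathbf z-\mathbf y)$ in $\mathbf y$ around the origin gives $\mathbf e_p(\mathbf z-\mathbf y)=\mathbf e_p(\mathbf z)+\mathcal O(|\mathbf z|^{-2})$ uniformly for $\mathbf y\in\Gamma$, with the same decay for all $\mathbf y$-derivatives (the $k$th being $\mathcal O(|\mathbf z|^{-k-1})$). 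A direct computation on the closed-form expression \eqref{eq:2.9} gives the scalar identity $\mathbf p_\infty(\mathbf z)=\mathbf e_p(\mathbf z)+\mathcal O(|\mathbf z|^{-3})$. Combining the two and using that $\mathbf p_\infty(\mathbf z)$ is $\mathbf y$-independent, one reads off both the $\mathbf L^2(\Gamma)$ bound and the Slobodeckij seminorm bound on the test function, producing $|u(\mathbf z)|\le C|\mathbf z|^{-2}\|\boldsymbol\lambda\|_{\mathbf H^{-1/2}(\Gamma)}$. Since $\int_{|\mathbf z|\ge R}|\mathbf z|^{-4}\,d\mathbf z<\infty$ in $\mathbb R^2$, the far-field piece $(1-\chi)u$ is in $L^2(\mathbb R^2)$ with the required norm control.

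The main obstacle is that we cannot work only with the pointwise asymptotic estimate announced between \eqref{eq:2.9}--\eqref{eq:2.10}; the duality pairing forces us to estimate the full $\mathbf H^{1/2}(\Gamma)$ norm of the test function, which is what really justifies using $\mathbf p_\infty$ (smooth and $\mathbf y$-independent, with no artificial singularity at the origin) as the subtraction rather than the bare leading term $\mathbf e_p(\mathbf z)$. Once the far-field Taylor analysis is carried out at the level of derivatives in $\mathbf y$, every step is routine and the combination of the near- and far-field bounds gives the asserted boundedness.
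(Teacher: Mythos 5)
Your argument is correct and follows essentially the same route as the paper: the result is obtained by expanding the kernel $\mathbf e_p(\mathbf z-\punto)$ at infinity, identifying the leading term with the subtracted $\mathbf p_\infty$ contribution, and observing that the $\mathcal O(|\mathbf z|^{-2})$ remainder is square integrable far away while the near field is covered by known mapping properties of the (Laplace/Stokes) layer potentials. Your write-up merely makes explicit the details the paper leaves implicit, namely the $\mathbf H^{1/2}(\Gamma)$-level estimate of the kernel difference needed for densities that are only in $\mathbf H^{-1/2}(\Gamma)$, and the cutoff splitting into near and far fields.
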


We note that while the result  in Proposition \ref{prop:2.1} is a direct consequence of what is known for the Stokes operator (see \cite{SaSe:2014} for a fully developed variational and integral theory), the decomposition of Proposition \ref{prop:2.2} (that subtracts an easily identifiable first order term from the potential) seems to be new.

\paragraph{The velocity potential.} For $\boldsymbol\lambda\in \mathbf H^{-1/2}(\Gamma)$, we define
\begin{equation}\label{eq:N2.5}
(\mathrm S_u(s)\boldsymbol\lambda)(\mathbf z)
:=\left\langle\mathrm E_u(\mathbf z-\,\cdot\,;s),\boldsymbol\lambda\right\rangle_\Gamma,
\end{equation}
where
\begin{equation}\label{eq:2.1}
\mathrm E_u(\mathbf r;s):=
	\frac1{4(d-1)\pi\nu} \left( 
		\frac{A_d(\sqrt{s}\, r)}{r^{d-2}}\,\mathrm I+ \frac{B_d(\sqrt{s}\, r)}{r^d}\,\mathbf r\otimes\mathbf r\right),
	\qquad \mathbf z\in \mathbb R^d\setminus\Gamma,
\end{equation}
and
\begin{eqnarray*}
A_3(z)&:=&2e^{-z}(1+z^{-1}+z^{-2})-2z^{-2}=2z^{-2} \left(e^{-z}(z^2+z+1)- 1\right),\\
B_3(z)&:=&-2e^{-z} (1+3z^{-1}+3z^{-2})+ 6 z^{-2}=-2z^{-2} \left( e^{-z}(z^2+3z+3)-3\right),\\
A_2(z) &:=& 2 (K_0(z)+z^{-1} K_1(z)-z^{-2}),\\
B_2(z) &:=& 2(-K_0(z)- 2 z^{-1} K_1(z)+ 2z^{-2})=2(2z^{-2}-K_2(z)),
\end{eqnarray*}
$K_\ell$ being the modified Bessel function of order $\ell$. The square root in \eqref{eq:2.1} is the one determination of the square root that is analytic in $\mathbb C_\star$. Note that
 $A_3$ and $B_3$ are entire functions with $A_3(0)=B_3(0)=1$. The functions $A_2$ and $B_2$ are only analytic in $\mathbb C_\star$, and have logarithmic singularities in the cut $(-\infty,0]$. Comparing the integral expressions of the Brinkman potential with those of the Stokes potential, it is possible to prove that for any $s\in \mathbb C_\star$
the operator $\mathrm S_u(s): \mathbf H^{-1/2}(\Gamma)\to \mathbf H^1(\mathbb R^d)$ is bounded. We are however interested in the dependence on $s$ of the bounds for this potential and some related integral operators.

\subsection{Variational theory in three dimensions}

\begin{proposition}[Existence via potential theory]\label{prop:4.1}
Let $\boldsymbol\lambda\in \mathbf H^{-1/2}(\Gamma)$ and consider the functions
$
\mathbf u_\lambda:=\mathrm S_u(s)\boldsymbol\lambda \in \mathbf H^1(\mathbb R^3)
$
and
$p_\lambda:=\mathrm S_p\boldsymbol\lambda\in L^2(\mathbb R^3).
$
Then
\begin{subequations}\label{eq:4.1}
\begin{alignat}{4}
\label{eq:4.1a}
 -2\nu \mathrm{div}\,\boldsymbol\varepsilon(\mathbf u_\lambda)+s\mathbf u_\lambda+\nabla p_\lambda = \mathbf 0 & \qquad & \mbox{in $\mathbb R^3\setminus\Gamma$},\\
\label{eq:4.1b}
 \mathrm{div}\,\mathbf u_\lambda = 0 & & \mbox{in $\mathbb R^3\setminus\Gamma$},\\
\label{eq:4.1c}
 \jump{\gamma\mathbf u_\lambda}=0, & & \\
\label{eq:4.1d}
 \jump{\mathbf t(\mathbf u_\lambda,p_\lambda)}= \boldsymbol\lambda. & &
\end{alignat}
\end{subequations}
Moreover, a pair $(\mathbf u_\lambda,p_\lambda)\in \mathbf H^1(\mathbb R^3)\times L^2(\mathbb R^3)$ is a solution of \eqref{eq:4.1} if and only if
\begin{equation}\label{eq:4.2}
\left[ \begin{array}{l}
\mathbf u_\lambda\in \mathbf H^1(\mathbb R^3), p_\lambda\in L^2(\mathbb R^3),\\[1.5ex]
\begin{array}{rll} \ds a(\mathbf u_\lambda,\mathbf v)+s(\mathbf u_\lambda,\mathbf v)_{\mathbb R^3}-(p_\lambda,\mathrm{div}\,\mathbf v)_{\mathbb R^3} &=\langle\boldsymbol\lambda,\gamma\mathbf v\rangle_\Gamma & \forall \mathbf v\in \mathbf H^1(\mathbb R^3),\\[1.5ex]
(\mathrm{div}\,\mathbf u_\lambda,q)_{\mathbb R^3} &=0 & \forall q \in L^2(\mathbb R^3).
\end{array}
\end{array} \right.
\end{equation}
\end{proposition}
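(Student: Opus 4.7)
The plan is to split the proposition into two tasks. First, establish the equivalence between the transmission system \eqref{eq:4.1} and the variational formulation \eqref{eq:4.2} for an arbitrary pair $(\mathbf u_\lambda,p_\lambda)\in\mathbf H^1(\mathbb R^3)\times L^2(\mathbb R^3)$; second, directly verify that the potential pair satisfies one of them. The central tool throughout is the Green-type identity \eqref{eq:N2.2}.

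For $\eqref{eq:4.1}\Rightarrow\eqref{eq:4.2}$, I would multiply \eqref{eq:4.1a} by $\mathbf v\in\mathbf H^1(\mathbb R^3)$ and apply the definitions of $\mathbf t^\pm(\mathbf u_\lambda,p_\lambda)$ on each of $\Omega_\pm$ separately. Since $\mathbf v$ has no trace jump across $\Gamma$, and neither does $\mathbf u_\lambda$ by \eqref{eq:4.1c}, summing the two local identities produces
\[
a(\mathbf u_\lambda,\mathbf v)+s(\mathbf u_\lambda,\mathbf v)_{\mathbb R^3}-(p_\lambda,\mathrm{div}\,\mathbf v)_{\mathbb R^3}=\langle\jump{\mathbf t(\mathbf u_\lambda,p_\lambda)},\gamma\mathbf v\rangle_\Gamma,
\]
which by \eqref{eq:4.1d} equals $\langle\boldsymbol\lambda,\gamma\mathbf v\rangle_\Gamma$; the divergence condition in \eqref{eq:4.2} is simply \eqref{eq:4.1b}. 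For the converse direction, I would first restrict the test function in \eqref{eq:4.2} to $\mathbf v\in\mathcal D(\mathbb R^3\setminus\Gamma)^d$, so that $\gamma\mathbf v=0$; this recovers \eqref{eq:4.1a} distributionally on $\mathbb R^3\setminus\Gamma$, and hence in $\mathbf L^2$ since both $\mathbf u_\lambda$ and $p_\lambda$ are square-integrable. With $\mathbf f:=-s\mathbf u_\lambda\in\mathbf L^2$ in hand, \eqref{eq:N2.2} becomes applicable; combining it with \eqref{eq:4.2} for general $\mathbf v\in\mathcal D(\mathbb R^3)^d$ yields $\langle\jump{\mathbf t(\mathbf u_\lambda,p_\lambda)}-\boldsymbol\lambda,\gamma\mathbf v\rangle_\Gamma=0$, and density of $\gamma\mathcal D(\mathbb R^3)^d$ in $\mathbf H^{1/2}(\Gamma)$ delivers \eqref{eq:4.1d}; \eqref{eq:4.1b} and \eqref{eq:4.1c} are immediate.

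It remains to verify \eqref{eq:4.1} for the potential pair. The regularity $\mathbf u_\lambda\in\mathbf H^1(\mathbb R^3)$ is pointed out in the discussion preceding the proposition, and $p_\lambda\in L^2(\mathbb R^3)$ is Proposition \ref{prop:2.1}(a). The PDE \eqref{eq:4.1a} and divergence condition \eqref{eq:4.1b} on $\mathbb R^3\setminus\Gamma$ follow by differentiating under the pairing with $\boldsymbol\lambda$, using that the tensor $\mathrm E_u(\cdot;s)$ together with the vector $\mathbf e_p$ form the fundamental solution of the Brinkman resolvent system. This is a direct, if tedious, calculation with the explicit formulas in \eqref{eq:2.1}. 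Condition \eqref{eq:4.1c} is automatic because $\mathbf u_\lambda\in\mathbf H^1(\mathbb R^3)$.

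The main obstacle is the jump relation \eqref{eq:4.1d}. I would establish it by inserting the potential representations into \eqref{eq:N2.2} for $\mathbf v\in\mathcal D(\mathbb R^3)^d$, interchanging the order of integration (legitimate because the three-dimensional kernel has only a weak $r^{-1}$ singularity), and invoking the distributional identity $-2\nu\,\mathrm{div}\,\bs\varepsilon\mathrm E_u(\cdot;s)+s\mathrm E_u(\cdot;s)+\nabla\mathbf e_p=\mathrm I\,\delta_0$ to collapse the volume contributions to the surface pairing $\langle\boldsymbol\lambda,\gamma\mathbf v\rangle_\Gamma$. A cleaner route that sidesteps this distributional manipulation observes that the Brinkman and Stokes single-layer kernels differ by an $\mathbf H^1_{\mathrm{loc}}$ correction in three dimensions, so the jump of normal stress is inherited directly from the classical Stokes jump formula established in \cite{SaSe:2014}.
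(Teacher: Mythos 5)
Your proposal is correct and follows essentially the paper's own route: the paper likewise verifies \eqref{eq:4.1a}--\eqref{eq:4.1b} by direct differentiation of the fundamental solution, gets \eqref{eq:4.1c} from $\mathbf u_\lambda\in\mathbf H^1(\mathbb R^3)$, deduces \eqref{eq:4.1d} from \eqref{eq:4.1a} together with \eqref{eq:N2.2} (your Fubini/distributional-identity computation is exactly what that compressed step amounts to), and treats the equivalence of \eqref{eq:4.1} and \eqref{eq:4.2} as the straightforward integration by parts that you spell out with the definitions of $\mathbf t^\pm$. One caveat on your optional shortcut for \eqref{eq:4.1d}: an $\mathbf H^1_{\mathrm{loc}}$ difference of the Brinkman and Stokes kernels does not by itself transfer the traction jump, since $\jump{\mathbf t(\punto,\punto)}$ involves first derivatives of the velocity, so one must additionally check that the correction potential has matching normal stresses from both sides of $\Gamma$ (true here because $A_3,B_3$ are entire with $A_3(0)=B_3(0)=1$, making the gradient of the correction kernel only weakly singular, of order $r^{-1}$), whereas your primary argument via the fundamental-solution identity needs no such verification.
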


\begin{proof} Regularity is guaranteed by the properties of the integral formulations of the layer potentials. The differential equations \eqref{eq:4.1a}-\eqref{eq:4.1b} are satisfied pointwise in a strong sense (this can be proved by differentiation directly in the fundamental solutions), and therefore, they are satisfied in a distributional sense. Condition \eqref{eq:4.1c} is a direct consequence of the fact that $\mathbf u_\lambda\in \mathbf H^1(\mathbb R^3).$ Finally, condition \eqref{eq:4.1d} follows from \eqref{eq:4.1a} and \eqref{eq:N2.2}. The equivalence of \eqref{eq:4.1} and \eqref{eq:4.2} is straightforward.
\end{proof}

\begin{proposition}[Variational form in solenoidal spaces]\label{prop:4.3}
If  $(\mathbf u_\lambda,p_\lambda)$ is a solution of \eqref{eq:4.2}, then $\mathbf u_\lambda$ is a solution of
\begin{equation}\label{eq:4.3}
\left[\begin{array}{l}\mathbf u_\lambda\in \widehat{\mathbf V}(\mathbb R^3),\\[1.5ex]
 a(\mathbf u_\lambda,\mathbf v)+s(\mathbf u_\lambda,\mathbf v)_{\mathbb R^3}=\langle\boldsymbol\lambda,\gamma\mathbf v\rangle_\Gamma\qquad \forall \mathbf v\in \widehat{\mathbf V}(\mathbb R^3).
\end{array}\right.
\end{equation}
Problem \eqref{eq:4.3} is well posed.
\end{proposition}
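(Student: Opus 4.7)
The plan is to split the argument into two largely independent halves: a trivial reduction showing that $\mathbf u_\lambda$ satisfies \eqref{eq:4.3} whenever $(\mathbf u_\lambda,p_\lambda)$ solves \eqref{eq:4.2}, and a direct proof of well-posedness of \eqref{eq:4.3} by a complex-valued Lax--Milgram argument adapted to $s\in\mathbb C_\star$.

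For the reduction, I would test the second equation of \eqref{eq:4.2} against arbitrary $q\in L^2(\mathbb R^3)$ to conclude $\mathrm{div}\,\mathbf u_\lambda=0$, so that $\mathbf u_\lambda\in\widehat{\mathbf V}(\mathbb R^3)$. Restricting the first equation of \eqref{eq:4.2} to test functions $\mathbf v\in\widehat{\mathbf V}(\mathbb R^3)$ kills the pressure term $(p_\lambda,\mathrm{div}\,\mathbf v)_{\mathbb R^3}$ and yields \eqref{eq:4.3} verbatim.

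For well-posedness I would work on the Hilbert space $\widehat{\mathbf V}(\mathbb R^3)$, which is closed in $\mathbf H^1(\mathbb R^3)$. Continuity of the form $a(\mathbf u,\mathbf v)+s(\mathbf u,\mathbf v)_{\mathbb R^3}$ and of the right-hand side $\mathbf v\mapsto\langle\bs\lambda,\gamma\mathbf v\rangle_\Gamma$ (via continuity of the trace on either side of $\Gamma$) are immediate. The only nontrivial point is coercivity. I would first establish the whole-space Korn identity
\[
2\|\boldsymbol\varepsilon(\mathbf u)\|_{\mathbb R^d}^2=\|\mathrm D\mathbf u\|_{\mathbb R^d}^2+\|\mathrm{div}\,\mathbf u\|_{\mathbb R^d}^2\qquad\forall\mathbf u\in\mathbf H^1(\mathbb R^d),
\]
which follows by two integrations by parts for $\mathbf u\in\mathcal D(\mathbb R^d)^d$ and density. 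On $\widehat{\mathbf V}(\mathbb R^3)$ this reduces to $2\|\boldsymbol\varepsilon(\mathbf u)\|^2=\|\mathrm D\mathbf u\|^2$, giving $\mathbf H^1$-equivalence of the deformation semi-norm.

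To deal with the complex parameter under the paper's bilinear-bracket convention, I would insert a conjugate by hand and test at $\overline{\mathbf u}$, obtaining $2\nu\|\boldsymbol\varepsilon(\mathbf u)\|^2+s\|\mathbf u\|^2$, then multiply by $\bar s/|s|$ and take real parts to get
\[
\bigl|a(\mathbf u,\overline{\mathbf u})+s(\mathbf u,\overline{\mathbf u})_{\mathbb R^3}\bigr|\ge \frac{\mathrm{Re}\,s}{|s|}\bigl(2\nu\|\boldsymbol\varepsilon(\mathbf u)\|^2_{\mathbb R^3}+|s|\,\|\mathbf u\|^2_{\mathbb R^3}\bigr),
\]
which, combined with the Korn identity, bounds $\|\mathbf u\|_{\mathbf H^1(\mathbb R^3)}^2$ from below with an $s$-dependent constant that stays positive on $\mathbb C_\star$. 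Complex Lax--Milgram then delivers existence, uniqueness, and continuous dependence.

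The only delicate point is really bookkeeping: the coercivity constant degenerates as $\mathrm{Re}\,s/|s|\to 0$ and as $|s|\to 0$ or $\infty$, and it is this explicit $s$-dependence — not the well-posedness itself — that will matter later when these Laplace-domain estimates are transferred to the time domain. I expect no other genuine obstacles.
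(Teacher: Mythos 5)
Your reduction to solenoidal test functions and your use of the whole-space Korn identity $2\|\boldsymbol\varepsilon(\mathbf u)\|_{\mathbb R^d}^2=\|\mathrm D\mathbf u\|_{\mathbb R^d}^2+\|\mathrm{div}\,\mathbf u\|_{\mathbb R^d}^2$ (by density from $\mathcal D(\mathbb R^d)^d$) are exactly the ingredients behind the paper's inequality $\|\mathbf u\|_{1,\mathbb R^3}^2\le 2\|\boldsymbol\varepsilon(\mathbf u)\|_{\mathbb R^3}^2+\|\mathbf u\|_{\mathbb R^3}^2$, so that part is fine. The genuine gap is in your coercivity step for complex $s$. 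Multiplying by $\bar s/|s|$ and taking real parts gives
\[
\mathrm{Re}\Bigl(\tfrac{\bar s}{|s|}\bigl(2\nu\|\boldsymbol\varepsilon(\mathbf u)\|_{\mathbb R^3}^2+s\|\mathbf u\|_{\mathbb R^3}^2\bigr)\Bigr)
=\frac{\mathrm{Re}\,s}{|s|}\,2\nu\|\boldsymbol\varepsilon(\mathbf u)\|_{\mathbb R^3}^2+|s|\,\|\mathbf u\|_{\mathbb R^3}^2,
\]
and your claimed lower bound carries the factor $\mathrm{Re}\,s/|s|$. This does \emph{not} ``stay positive on $\mathbb C_\star$'': the set $\mathbb C_\star=\mathbb C\setminus(-\infty,0]$ contains points with $\mathrm{Re}\,s\le 0$ (e.g.\ $s=i$ or $s=-1+2i$), for which your bound is vacuous, and no other fixed rotation $e^{i\theta}$ rescues the argument uniformly, since the $\boldsymbol\varepsilon$-term is then weighted by a non-positive number and can dominate. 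These left-half-plane values of $s$ cannot be discarded: the statement asserts well-posedness on all of $\mathbb C_\star$, and the time-domain transfer in Appendix~A integrates over contours that enter $\{\mathrm{Re}\,s<0\}$.

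The fix — and the device the paper uses systematically (Proposition \ref{prop:N3.3}(b), \eqref{eq:5.20}, Proposition \ref{prop:5.6}) — is to rotate by $\bar s^{1/2}$ rather than $\bar s$: since $\bar s^{1/2}s=|s|\,s^{1/2}$, one gets $\mathrm{Re}\bigl(\bar s^{1/2}\bigl(2\nu\|\boldsymbol\varepsilon(\mathbf u)\|^2+s\|\mathbf u\|^2\bigr)\bigr)=\omega\,\triple{\mathbf u}_{(s)}^2$ with $\omega=\mathrm{Re}\,s^{1/2}>0$ for \emph{every} $s\in\mathbb C_\star$, because the principal square root maps $\mathbb C_\star$ into the right half-plane. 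Hence $|a(\mathbf u,\overline{\mathbf u})+s(\mathbf u,\overline{\mathbf u})_{\mathbb R^3}|\ge \omega\,|s|^{-1/2}\triple{\mathbf u}_{(s)}^2$, which together with the Korn identity gives a strictly positive (if $s$-dependent) coercivity constant on all of $\mathbb C_\star$, and Lax--Milgram then yields well-posedness as you intended. With this single replacement your argument is correct and follows essentially the same route as the paper, which likewise restricts \eqref{eq:4.2} to $\widehat{\mathbf V}(\mathbb R^3)$ and invokes coercivity of the form via the Korn-type norm equivalence.
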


\begin{proof} Taking $\mathbf v\in \widehat{\mathbf V}(\mathbb R^3)$ as test functions in \eqref{eq:4.2} it is clear that $\mathbf u_\lambda$ satisfies the equations \eqref{eq:4.3}. To prove well posedness, note that
\[
\|\mathbf u\|_{1,\mathbb R^3}^2\le 2 \|\boldsymbol\varepsilon(\mathbf u)\|_{\mathbb R^3}^2+\|\mathbf u\|_{\mathbb R^3}^2\le 2\|\mathbf u\|_{1,\mathbb R^3}^2 \qquad \forall \mathbf u\in \boldsymbol{\mathcal D}(\mathbb R^3). 
\]
Applying a density argument, the bilinear form of \eqref{eq:4.3} is shown to be coercive.
\end{proof}

\begin{corollary}
Problem  \eqref{eq:4.2} has a unique solution. Problem \eqref{eq:4.1} has a unique solution in $\mathbf H^1(\mathbb R^3)\times L^2(\mathbb R^3)$.
\end{corollary}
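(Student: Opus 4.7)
The proof is a short assembly of the two propositions that precede it, so the plan is really to combine them carefully rather than to prove anything substantial. The existence half is immediate: by the boundedness of $\mathrm S_u(s):\mathbf H^{-1/2}(\Gamma)\to\mathbf H^1(\mathbb R^3)$ together with Proposition~\ref{prop:2.1}(a), the pair $(\mathbf u_\lambda,p_\lambda):=(\mathrm S_u(s)\boldsymbol\lambda,\mathrm S_p\boldsymbol\lambda)$ belongs to $\mathbf H^1(\mathbb R^3)\times L^2(\mathbb R^3)$, and Proposition~\ref{prop:4.1} tells us that this pair solves \eqref{eq:4.1}, hence also \eqref{eq:4.2}. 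So only uniqueness for \eqref{eq:4.2} needs attention; uniqueness for \eqref{eq:4.1} then follows from the equivalence \eqref{eq:4.1}$\Leftrightarrow$\eqref{eq:4.2} in Proposition~\ref{prop:4.1}.

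For uniqueness of \eqref{eq:4.2}, I would take the difference $(\mathbf u,p)$ of two putative solutions and observe that it satisfies \eqref{eq:4.2} with $\boldsymbol\lambda=\mathbf 0$. The second equation of \eqref{eq:4.2} forces $\mathrm{div}\,\mathbf u=0$, so $\mathbf u\in\widehat{\mathbf V}(\mathbb R^3)$. Restricting the first equation to solenoidal test functions kills the pressure term and leaves $\mathbf u$ as a solution of the homogeneous version of \eqref{eq:4.3}. By the well-posedness of \eqref{eq:4.3} granted by Proposition~\ref{prop:4.3}, this yields $\mathbf u=\mathbf 0$.

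With $\mathbf u=\mathbf 0$, the first equation of \eqref{eq:4.2} reduces to
\[
(p,\mathrm{div}\,\mathbf v)_{\mathbb R^3}=0\qquad\forall\mathbf v\in\mathbf H^1(\mathbb R^3).
\]
Testing against $\mathbf v\in\boldsymbol{\mathcal D}(\mathbb R^3)$ shows that $\nabla p=\mathbf 0$ as a distribution on $\mathbb R^3$, so $p$ is constant; since $p\in L^2(\mathbb R^3)$, the constant must vanish.

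There is no real obstacle here: the only step that is not a bookkeeping exercise is the recovery of pressure uniqueness from $(p,\mathrm{div}\,\mathbf v)_{\mathbb R^3}=0$, and even that is a standard full-space argument using that the only constant in $L^2(\mathbb R^3)$ is zero. I would flag that this argument uses the full-space setting crucially, so that when the analogous question is considered on a bounded or exterior Lipschitz domain the factoring out of a constant mode will have to be handled separately.
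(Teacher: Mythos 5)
Your proposal is correct and follows essentially the same route as the paper: existence via the potential pair from Proposition \ref{prop:4.1}, and uniqueness by invoking the well-posedness of the solenoidal problem \eqref{eq:4.3} (Proposition \ref{prop:4.3}) to get $\mathbf u=\mathbf 0$, then $\nabla p=\mathbf 0$ with $p\in L^2(\mathbb R^3)$ forcing $p=0$. Your write-up merely spells out the intermediate steps (restriction to solenoidal test functions, the distributional argument for $\nabla p=\mathbf 0$) that the paper leaves implicit.
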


\begin{proof} {\em Uniqueness.}
If $(\mathbf u,p)$ is a solution of the corresponding homogeneous problem, by Proposition \ref{prop:4.3} it follows that $\mathbf u\equiv\mathbf 0$. Therefore $p\in L^2(\mathbb R^3)$ satisfies $\nabla p=\mathbf 0$, which implies that $p\equiv 0$. {\em Existence.} Proposition \ref{prop:4.1} shows existence of solution using the integral form of the potentials.
\end{proof}

\begin{corollary}
$\mathrm S_u(s)\mathbf n\equiv \mathbf 0$
\end{corollary}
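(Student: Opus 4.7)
The plan is to use the variational characterization of Proposition \ref{prop:4.3} together with the solenoidal property of test functions. Set $\boldsymbol\lambda:=\mathbf n$ (which lies in $\mathbf L^\infty(\Gamma)\subset \mathbf H^{-1/2}(\Gamma)$) and let $\mathbf u_\lambda:=\mathrm S_u(s)\mathbf n$. By Proposition \ref{prop:4.3} combined with the preceding corollary, $\mathbf u_\lambda$ is the unique element of $\widehat{\mathbf V}(\mathbb R^3)$ satisfying
\[
a(\mathbf u_\lambda,\mathbf v)+s(\mathbf u_\lambda,\mathbf v)_{\mathbb R^3}=\langle\mathbf n,\gamma\mathbf v\rangle_\Gamma \qquad \forall \mathbf v\in \widehat{\mathbf V}(\mathbb R^3).
\]
The key observation is that the right-hand side vanishes identically. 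Indeed, for any $\mathbf v\in \widehat{\mathbf V}(\mathbb R^3)$ we have $\mathrm{div}\,\mathbf v=0$ in all of $\mathbb R^3$, so applying the divergence theorem on the bounded Lipschitz domain $\Omega_-$ (where $\mathbf n$ is the outward normal) yields
\[
\langle \mathbf n,\gamma\mathbf v\rangle_\Gamma=\int_{\Omega_-}\mathrm{div}\,\mathbf v=0.
\]

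With the right-hand side equal to zero, the variational problem reduces to a homogeneous coercive problem on $\widehat{\mathbf V}(\mathbb R^3)$, and the uniqueness argument in the proof of Proposition \ref{prop:4.3} forces $\mathbf u_\lambda=\mathbf 0$. The only mildly delicate point is confirming that the duality pairing $\langle \mathbf n,\gamma\mathbf v\rangle_\Gamma$ genuinely equals the integral that the divergence theorem handles; this is immediate from the convention declared in the foreword (all brackets are bilinear and extend the $\mathbf L^2(\Gamma)$ inner product), since $\mathbf n$ is bounded and $\gamma\mathbf v\in \mathbf H^{1/2}(\Gamma)\subset \mathbf L^2(\Gamma)$. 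No further technical obstacle is expected; the identity $\mathrm S_u(s)\mathbf n\equiv \mathbf 0$ is essentially a reformulation of the incompressibility of the solutions generated by the single layer representation.
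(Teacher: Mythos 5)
Your argument is correct, but it takes a different route from the one in the paper. The paper proves this corollary by exhibiting an explicit solution of the mixed problem \eqref{eq:4.2} with $\boldsymbol\lambda=\mathbf n$, namely the pair $(\mathbf 0,-\chi_{\Omega_-})$: with $p=-\chi_{\Omega_-}$ the term $-(p,\mathrm{div}\,\mathbf v)_{\mathbb R^3}$ reproduces $\langle\mathbf n,\gamma\mathbf v\rangle_\Gamma$ for all $\mathbf v\in\mathbf H^1(\mathbb R^3)$ by the divergence theorem, and the uniqueness for \eqref{eq:4.2} established in the preceding corollary identifies this pair with the layer potentials, giving $\mathrm S_u(s)\mathbf n=\mathbf 0$ and, as a byproduct, $\mathrm S_p\mathbf n=-\chi_{\Omega_-}$. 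You instead work entirely in the solenoidal formulation \eqref{eq:4.3}: since $\langle\mathbf n,\gamma\mathbf v\rangle_\Gamma=\int_{\Omega_-}\mathrm{div}\,\mathbf v=0$ for every $\mathbf v\in\widehat{\mathbf V}(\mathbb R^3)$, the potential solves the homogeneous coercive problem and must vanish by the well-posedness stated in Proposition \ref{prop:4.3}. This is in fact exactly the argument the authors use later, in the proof of Proposition \ref{prop:N3.3}(c), for general $d$; your version has the advantage of never touching the pressure (so it transfers verbatim to the two-dimensional case, where $p_\lambda$ is only locally $L^2$), whereas the paper's version yields the additional identification of the pressure potential. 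One small attribution point: the ingredient you need is Proposition \ref{prop:4.1} (the potentials solve \eqref{eq:4.2}) combined with Proposition \ref{prop:4.3} (solutions of \eqref{eq:4.2} solve the well-posed problem \eqref{eq:4.3}); the uniqueness corollary for \eqref{eq:4.2} that you cite is not the relevant step, though invoking it does no harm.
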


\begin{proof}
It is clear that $(\mathbf 0,-\chi_{\Omega_-})$ is a solution of \eqref{eq:4.2} with $\boldsymbol\lambda=\mathbf n$. By uniqueness, this is the layer potential.
\end{proof}

\subsection{Variational theory in two dimensions}

At this stage, the main difference between the two and three dimensional cases arises from the fact that $p=\mathrm S_p \boldsymbol\lambda\not\in L^2(\mathbb R^2)$ if $\boldsymbol\lambda\not\in \mathbf H^{-1/2}_0(\Gamma)$. Note that the condition $\boldsymbol\lambda\in\mathbf H^{-1/2}_0(\Gamma)$ is natural in the two-dimensional Stokes equation \cite[Proposition 3.2]{SaSe:2014} and it is somehow due to the fact that constant functions are elements of the associated weighted Sobolev spaces. This is not the case for the Brinkman problem. The following approach  uses the precise knowledge of the asymptotic behavior of the single layer potential for the pressure variable.

\begin{proposition}[Existence via potential theory]
If $\boldsymbol\lambda\in \mathbf H^{-1/2}(\Gamma)$ and we consider the functions
$
\mathbf u_\lambda:=\mathrm S_u(s)\boldsymbol\lambda \in \mathbf H^1(\mathbb R^2), 
$
and
$p_\lambda:=\mathrm S_p\boldsymbol\lambda\in L^2_{\mathrm{loc}}(\mathbb R^2),
$
then
\begin{subequations}\label{eq:4.4}
\begin{alignat}{4}
 -2\nu \mathrm{div}\,\boldsymbol\varepsilon(\mathbf u_\lambda)+s\mathbf u_\lambda+\nabla p_\lambda = \mathbf 0 & \qquad & \mbox{in $\mathbb R^2\setminus\Gamma$},\\
 \mathrm{div}\,\mathbf u_\lambda = 0 & & \mbox{in $\mathbb R^2\setminus\Gamma$},\\
 \jump{\gamma\mathbf u_\lambda}=0, & & \\
 \jump{\mathbf t(\mathbf u_\lambda,p_\lambda)}= \boldsymbol\lambda, & &
\end{alignat}
\end{subequations}
\end{proposition}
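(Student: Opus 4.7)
The plan is to mirror the three-dimensional argument in Proposition \ref{prop:4.1} essentially line by line, with the sole modification being the careful handling of the fact that in two dimensions the pressure potential $p_\lambda=\mathrm{S}_p\boldsymbol\lambda$ only lies in $L^2_{\mathrm{loc}}(\mathbb R^2)$ rather than in $L^2(\mathbb R^2)$.

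First I would record the regularity. The membership $\mathbf u_\lambda=\mathrm{S}_u(s)\boldsymbol\lambda\in\mathbf H^1(\mathbb R^2)$ is part of the standing boundedness statement for the Brinkman single layer potential mentioned right after \eqref{eq:2.1}. For the pressure, Proposition \ref{prop:2.2} yields the decomposition $p_\lambda=\sum_\ell(\mathbf p_\infty\cdot\mathbf e_\ell)\jmath_\ell(\boldsymbol\lambda)+r_\lambda$ with $r_\lambda\in L^2(\mathbb R^2)$, and since $\mathbf p_\infty$ defined in \eqref{eq:2.9} is clearly in $L^2_{\mathrm{loc}}(\mathbb R^2)$, one concludes $p_\lambda\in L^2_{\mathrm{loc}}(\mathbb R^2)$.

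Next, the differential equations (4.4a) and (4.4b) hold pointwise in $\mathbb R^2\setminus\Gamma$ by direct differentiation of the fundamental solutions $\mathrm E_u(\,\cdot\,;s)$ and $\mathbf e_p$: indeed $(\mathrm E_u(\,\cdot\,;s),\mathbf e_p)$ is constructed precisely so that each column solves the Brinkman resolvent system away from the origin (this is a computation with the Bessel functions $K_0,K_1,K_2$ appearing in $A_2,B_2$, using the resolvent identities they satisfy). Pointwise validity away from $\Gamma$ gives the equations in the sense of distributions on $\mathbb R^2\setminus\Gamma$. The trace jump condition (4.4c) is immediate from $\mathbf u_\lambda\in \mathbf H^1(\mathbb R^2)$.

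The only step requiring any care is the stress jump (4.4d). I would invoke the local extension of the normal-stress functionals $\mathbf t^\pm(\mathbf u,p)$ which the paper explicitly permits (see the remark right before \eqref{eq:N2.2}): because $p_\lambda$ is locally $L^2$ in a neighborhood of $\Gamma$ and $-2\nu\,\mathrm{div}\,\bs\varepsilon(\mathbf u_\lambda)+\nabla p_\lambda=-s\mathbf u_\lambda\in \mathbf L^2_{\mathrm{loc}}(\mathbb R^2)$, the functionals $\mathbf t^\pm(\mathbf u_\lambda,p_\lambda)\in\mathbf H^{-1/2}(\Gamma)$ are well defined. Testing with $\mathbf v\in \mathcal D(\mathbb R^2)^2$, only a bounded neighborhood of $\Gamma$ is seen, so formula \eqref{eq:N2.2} applies verbatim, and substituting $-2\nu\,\mathrm{div}\,\bs\varepsilon(\mathbf u_\lambda)+\nabla p_\lambda=-s\mathbf u_\lambda$ on $\mathbb R^2\setminus\Gamma$ together with $\mathrm{div}\,\mathbf u_\lambda=0$ produces
\[
\langle\jump{\mathbf t(\mathbf u_\lambda,p_\lambda)},\gamma\mathbf v\rangle_\Gamma=a(\mathbf u_\lambda,\mathbf v)+s(\mathbf u_\lambda,\mathbf v)_{\mathbb R^2}-(p_\lambda,\mathrm{div}\,\mathbf v)_{\mathbb R^2},
\]
and the right-hand side equals $\langle\boldsymbol\lambda,\gamma\mathbf v\rangle_\Gamma$ by the standard variational identity for the single-layer potential (checked either by a direct computation using $\mathrm E_u$, $\mathbf e_p$, or, as in the three-dimensional case, by noting the companion identity analogous to \eqref{eq:4.2} is automatically satisfied in a neighborhood of $\Gamma$). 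A density argument lifts the identity from $\mathcal D(\mathbb R^2)^2$ to arbitrary $\gamma\mathbf v\in \mathbf H^{1/2}(\Gamma)$, yielding (4.4d).

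The main obstacle is purely bookkeeping: one must consistently work with the local, rather than global, version of the normal-stress functionals, and keep track of the unbounded first-order asymptotic term $(\mathbf p_\infty\cdot\mathbf e_\ell)\jmath_\ell(\boldsymbol\lambda)$ in $p_\lambda$ when manipulating integrals. No genuinely new estimate is required beyond Propositions \ref{prop:2.1}--\ref{prop:2.2} and the pointwise fundamental-solution calculation.
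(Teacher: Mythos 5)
Your proposal is correct and follows essentially the same route the paper intends: the paper disposes of this proposition with the one-line remark that it is ``a consequence of the properties of the associated integral operators,'' implicitly reusing the argument of Proposition \ref{prop:4.1} together with the local extension of the stress functionals and the decomposition of Proposition \ref{prop:2.2}, which are exactly the ingredients you spell out. The additional detail you provide (local $L^2$ bookkeeping for $p_\lambda$ near $\Gamma$ and the use of \eqref{eq:N2.2} with compactly supported test functions) fills in what the paper leaves implicit, so there is no gap and no genuinely different method to compare.
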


\begin{proof}
This is just a consequence of the properties of the associated integral operators.
\end{proof}

\begin{proposition}[Variational form in solenoidal spaces]\label{prop:4.8}
Let $\boldsymbol\lambda\in \mathbf H^{-1/2}(\Gamma)$, and $\mathbf u_\lambda:=\mathrm S_u(s)\boldsymbol\lambda$. Then $\mathbf u_\lambda$ is the unique solution of the variational problem 
\begin{equation}\label{eq:4.5}
\left[\begin{array}{l}\mathbf u_\lambda \in \widehat{\mathbf V}(\mathbb R^2),\\[1.5ex]
 a(\mathbf u_\lambda,\mathbf v)+s(\mathbf u_\lambda,\mathbf v)_{\mathbb R^2}=\langle\boldsymbol\lambda,\gamma\mathbf v\rangle_\Gamma\qquad \forall \mathbf v\in \widehat{\mathbf V}(\mathbb R^2).
\end{array}\right.
\end{equation}
\end{proposition}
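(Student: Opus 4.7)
The plan is to mimic the proof of the three-dimensional version in Proposition \ref{prop:4.3}, while working around the fact that $p_\lambda\notin L^2(\mathbb R^2)$ in general. The essential observation is that the pressure enters a variational identity like \eqref{eq:4.5} only through the term $(p_\lambda,\mathrm{div}\,\mathbf v)$, which vanishes whenever $\mathbf v$ is solenoidal; hence by restricting first to compactly supported test functions and using only the local integrability of $p_\lambda$ (Proposition \ref{prop:2.1}(b), together with the asymptotic refinement of Proposition \ref{prop:2.2}), one can avoid touching the non-$L^2$ tail of the pressure.

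I would first derive \eqref{eq:4.5} for $\mathbf v\in\mathcal D(\mathbb R^2)^2$ satisfying $\mathrm{div}\,\mathbf v=0$. The preceding proposition supplies the PDE $-2\nu\mathrm{div}\,\boldsymbol\varepsilon(\mathbf u_\lambda)+s\mathbf u_\lambda+\nabla p_\lambda=\mathbf 0$ on $\mathbb R^2\setminus\Gamma$ together with the jump condition $\jump{\mathbf t(\mathbf u_\lambda,p_\lambda)}=\boldsymbol\lambda$. Substituting both into the local identity \eqref{eq:N2.2}---which is valid for $p\in L^2_{\rm loc}$ because $\mathbf v$ has compact support---and exploiting $\mathrm{div}\,\mathbf v=0$ to kill the pressure-divergence term, one obtains
\[
\langle\boldsymbol\lambda,\gamma\mathbf v\rangle_\Gamma=a(\mathbf u_\lambda,\mathbf v)+s(\mathbf u_\lambda,\mathbf v)_{\mathbb R^2}.
\]
Next I would extend this identity to all of $\widehat{\mathbf V}(\mathbb R^2)$ by density: the space of smooth, compactly supported, solenoidal fields is dense in $\widehat{\mathbf V}(\mathbb R^2)$ (a classical full-space fact, obtainable in $d=2$ through a stream-function reduction), and both sides of the identity above are $\mathbf H^1(\mathbb R^2)$-continuous in $\mathbf v$ since $\mathbf u_\lambda\in\mathbf H^1(\mathbb R^2)$.

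Uniqueness follows exactly as in the three-dimensional case: the Korn-type bound
\[
\|\mathbf u\|_{1,\mathbb R^2}^2\le 2\|\boldsymbol\varepsilon(\mathbf u)\|_{\mathbb R^2}^2+\|\mathbf u\|_{\mathbb R^2}^2\qquad\forall\mathbf u\in\boldsymbol{\mathcal D}(\mathbb R^2),
\]
extended by density to $\mathbf H^1(\mathbb R^2)$, together with the standard rotation trick (multiplying the bilinear form by a unit complex number $\theta$ chosen so that $\mathrm{Re}(\theta\nu)>0$ and $\mathrm{Re}(\theta s)>0$, which is possible precisely because $\arg s\in(-\pi,\pi)$ for $s\in\mathbb C_\star$) yields coercivity of $a(\cdot,\cdot)+s(\cdot,\cdot)_{\mathbb R^2}$ on $\widehat{\mathbf V}(\mathbb R^2)$, so Lax--Milgram applies. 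The main obstacle is the first step: one must justify that the ``local'' interpretation of $\jump{\mathbf t(\mathbf u_\lambda,p_\lambda)}$ tested against compactly supported $\mathbf v$ in \eqref{eq:N2.2} still equals $\boldsymbol\lambda$ even when $p_\lambda\notin L^2(\mathbb R^2)$. This is exactly what Proposition \ref{prop:2.2} delivers, by exhibiting $p_\lambda$ as an $L^2$ function plus an explicit smooth field $\sum_\ell(\mathbf p_\infty\cdot\mathbf e_\ell)\jmath_\ell(\boldsymbol\lambda)$ that is harmless both across $\Gamma$ and against compactly supported test functions.
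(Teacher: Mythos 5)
Your proof is correct, and it takes a genuinely different route from the paper's. The paper decomposes $p_\lambda=\sum_{\ell}\jmath_\ell(\boldsymbol\lambda)\,\mathbf p_\infty\cdot\mathbf e_\ell+p_{\mathrm{reg},\lambda}$ with $p_{\mathrm{reg},\lambda}\in L^2(\mathbb R^2)$, recasts the transmission problem for $(\mathbf u_\lambda,p_{\mathrm{reg},\lambda})$ as the global mixed problem \eqref{eq:4.7} with the extra body force $-\sum_\ell\jmath_\ell(\boldsymbol\lambda)\mathbf g_\ell$, restricts to solenoidal test functions, and then shows $(\mathbf g_\ell,\mathbf v)_{\mathbb R^2}=0$ for all $\mathbf v\in\widehat{\mathbf V}(\mathbb R^2)$ by approximating the scalar potential $\mathbf p_\infty\cdot\mathbf e_\ell$ in the weighted space $W(\mathbb R^2)$ by elements of $\mathcal D(\mathbb R^2)$ \cite{AmGiGi:1992}. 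You instead test the strong equations directly against smooth, compactly supported, divergence-free fields, where only $p_\lambda\in L^2_{\mathrm{loc}}$ is needed and the pressure term drops out at once, and then extend to all of $\widehat{\mathbf V}(\mathbb R^2)$ by density; you use Proposition \ref{prop:2.2} for the same purpose as the paper does, namely to see that the smooth tail of the pressure does not contribute to the traction jump. The trade-off is that each route hides the delicate two-dimensional behavior at infinity in one density statement: the paper's is the scalar density of $\mathcal D(\mathbb R^2)$ in $W(\mathbb R^2)$, which it cites; yours is the density of compactly supported smooth solenoidal fields in $\widehat{\mathbf V}(\mathbb R^2)$, which is indeed classical but is precisely where the work lies, so it deserves a reference or a sketch --- note that your stream-function reduction leads back to the weighted-space density results of \cite{AmGiGi:1992} (now for second-order weighted spaces, since you need $\nabla\psi_n\to\nabla\psi$ in $\mathbf H^1(\mathbb R^2)$). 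Your route is more economical (no intermediate mixed problem), while the paper's uses only the scalar density result it already invokes and exhibits the regularized pressure explicitly. Two further remarks: your explicit coercivity argument (rotation by a unimodular $\theta$ plus the whole-space Korn bound) supplies the uniqueness claim, which the paper leaves implicit by appeal to the argument of Proposition \ref{prop:4.3}; and be aware that \eqref{eq:N2.2} as printed has the sign of the body-force term flipped relative to the definitions of $\mathbf t^\pm$ (a literal application would yield $a(\mathbf u_\lambda,\mathbf v)-s(\mathbf u_\lambda,\mathbf v)_{\mathbb R^2}$), so your stated identity $a(\mathbf u_\lambda,\mathbf v)+s(\mathbf u_\lambda,\mathbf v)_{\mathbb R^2}=\langle\boldsymbol\lambda,\gamma\mathbf v\rangle_\Gamma$ is the correct, intended one.
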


\begin{proof} Consider $\mathbf p_\infty$, $\mathbf g_\ell$, and $\jmath_\ell$ as defined in \eqref{eq:2.9} and \eqref{eq:2.10}.
Note that we can write (see Proposition \ref{prop:2.2})
\[
p_\lambda=\sum_{\ell=1}^2  \jmath_\ell(\boldsymbol\lambda)\,\mathbf p_\infty\cdot\mathbf e_\ell+p_{\mathrm{reg},\lambda}, \quad p_{\mathrm{reg},\lambda}\in L^2(\mathbb R^2),
\]
and
\[
\nabla p_\lambda = \sum_{\ell=1}^2 \jmath_\ell(\boldsymbol\lambda)\mathbf g_\ell +\nabla p_{\mathrm{reg},\lambda}, \qquad \mathbf g_\ell\in \mathbf L^2(\mathbb R^2).
\]
Also
\[
\boldsymbol\lambda=\jump{\mathbf t(\mathbf u_\lambda,p_\lambda)}=\jump{\mathbf t(\mathbf u_\lambda,p_{\mathrm{reg},\lambda})}+\sum_{\ell=1}^2\jmath_\ell(\boldsymbol\lambda)\jump{\mathbf t(\mathbf 0,\mathbf p_\infty\cdot\mathbf e_\ell)}=\jump{\mathbf t(\mathbf u_\lambda,p_{\mathrm{reg},\lambda})}.
\]
Therefore $(\mathbf u_\lambda,p_{\mathrm{reg},\lambda})\in \mathbf H^1(\mathbb R^2)\times L^2(\mathbb R^2)$ is a solution of
\begin{subequations}\label{eq:4.6}
\begin{alignat}{4}
 -2\nu \mathrm{div}\,\boldsymbol\varepsilon(\mathbf u_\lambda)+s\mathbf u_\lambda+\nabla p_{\mathrm{reg},\lambda} = -\sum_{\ell=1}^2 \jmath_\ell(\boldsymbol\lambda)\mathbf g_\ell & \qquad & \mbox{in $\mathbb R^2\setminus\Gamma$},\\
 \mathrm{div}\,\mathbf u_\lambda = 0 & & \mbox{in $\mathbb R^2\setminus\Gamma$},\\
 \jump{\gamma\mathbf u_\lambda}=0, & & \\
 \jump{\mathbf t(\mathbf u_\lambda,p_{\mathrm{reg},\lambda})}= \boldsymbol\lambda, & &
\end{alignat}
\end{subequations}
but this problem is equivalent to
\begin{equation}\label{eq:4.7}
\left[ \begin{array}{l}
\mathbf u_\lambda\in \mathbf H^1(\mathbb R^2), p_{\mathrm{reg},\lambda}\in L^2(\mathbb R^2),\\[1.5ex]
\begin{array}{ll} \ds a(\mathbf u_\lambda,\mathbf v)+s(\mathbf u_\lambda,\mathbf v)_{\mathbb R^2}-(p_{\mathrm{reg},\lambda},\mathrm{div}\,\mathbf v)_{\mathbb R^2} \\
\hspace{3cm} =\langle\boldsymbol\lambda,\gamma\mathbf v\rangle_\Gamma-\sum_{\ell=1}^2\jmath_\ell(\boldsymbol\lambda)\, (\mathbf g_\ell,\mathbf v)_{\mathbb R^2} & \forall \mathbf v\in \mathbf H^1(\mathbb R^2),\\[1.5ex]
(\mathrm{div}\,\mathbf u_\lambda,q)_{\mathbb R^2} =0  & \forall q \in L^2(\mathbb R^2).
\end{array}
\end{array} \right.
\end{equation}
Testing with $\mathbf v\in \widehat{\mathbf V}(\mathbb R^2)$ we obtain the problem
\begin{equation}\label{eq:4.8}
\left[\begin{array}{l}\mathbf u_\lambda \in \widehat{\mathbf V}(\mathbb R^2),\\[1.5ex]
 a(\mathbf u_\lambda,\mathbf v)+s(\mathbf u_\lambda,\mathbf v)_{\mathbb R^2}=\langle\boldsymbol\lambda,\gamma\mathbf v\rangle_\Gamma-\sum_{\ell=1}^2 \jmath_\ell(\boldsymbol\lambda) (\mathbf g_\ell,\mathbf v)_{\mathbb R^2}\qquad \forall \mathbf v\in \widehat{\mathbf V}(\mathbb R^2).
\end{array}\right.
\end{equation}
We next notice that
\[
\mathbf p_\infty\cdot\mathbf e_\ell \in W(\mathbb R^2):=\{ u:\mathbb R^2\to\mathbb R\,:\, \rho u\in L^2(\mathbb R^2),
\quad\nabla u\in \mathbf L^2(\mathbb R^2)\}, 
\]
where (see \cite{AmGiGi:1992})
\[
\rho(\mathbf x):=\frac1{1+\smallfrac12\log(1+|\mathbf x|^2)}
\frac1{\sqrt{1+|\mathbf x|^2}}. 
\]
By density of smooth compactly supported functions in $W(\mathbb R^2)$ \cite{AmGiGi:1992}, it follows that there exists $\{\varphi_n\} \subset \mathcal D(\mathbb R^2)$ such that $\nabla \varphi_n \to \mathbf g_\ell=\nabla (\mathbf p_\infty\cdot\mathbf e_\ell)$. Therefore
\[
(\mathbf g_\ell,\mathbf v)_{\mathbb R^2}=\lim_{n\to \infty} (\nabla \varphi_n,\mathbf v)_{\mathbb R^2}=-\lim_{n\to \infty} (\varphi_n,\mathrm{div}\,\mathbf v)_{\mathbb R^2}=0 \qquad \forall \mathbf v\in \widehat{\mathbf V}(\mathbb R^2),
\] 
which shows that problems \eqref{eq:4.5} and \eqref{eq:4.8} are the same.
\end{proof}

\section{Bounds in the Laplace domain}\label{sec:3}

In this section we study properties of the operators $\mathrm S(s):=\mathrm S_u(s)$ (see \eqref{eq:N2.5}), $\mathrm V(s):=\gamma \mathrm S(s)$, $\mathrm V(s)^{-1}$ and $\mathrm S(s)\mathrm V(s)^{-1}(s)$ as functions of $s\in \mathbb C_\star.$ We start with two technical results.

\begin{lemma}\label{lemma:4.1}
Let
\[
\mathbf H^{1/2}_n(\Gamma):=\{\boldsymbol\xi\in \mathbf H^{1/2}(\Gamma)\,:\, \int_\Gamma \boldsymbol\xi\cdot\mathbf n=0\}
\]
and $\widehat{\mathbf V}(\mathbb R^d)$ be as defined in \eqref{eq:N2.1}
Then the trace operator $\gamma :\widehat{\mathbf V}(\mathbb R^d) \to \mathbf H^{1/2}_n(\Gamma)$ is surjective.
\end{lemma}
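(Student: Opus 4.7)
The plan is to construct a divergence-free $\mathbf H^1(\mathbb R^d)$ preimage for any prescribed $\bs\xi\in\mathbf H^{1/2}_n(\Gamma)$ by splitting the construction into an interior and a truncated-exterior lifting and gluing them along $\Gamma$.

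Fix $\bs\xi\in\mathbf H^{1/2}_n(\Gamma)$ and choose an open ball $B$ with $\overline{\Omega_-}\subset B$; write $\Omega_\mathrm{ring}:=B\setminus\overline{\Omega_-}$, which is a bounded Lipschitz domain whose boundary is $\Gamma\cup\partial B$. In each of the two bounded Lipschitz domains $\Omega_-$ and $\Omega_\mathrm{ring}$ I would produce a divergence-free $\mathbf H^1$ field with the appropriate Dirichlet data. Concretely, inside $\Omega_-$ I would first take any continuous right inverse $E:\mathbf H^{1/2}(\Gamma)\to\mathbf H^1(\Omega_-)$ of the trace (Stein-type extension) to obtain $\widetilde{\mathbf u}_-:=E\bs\xi$. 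Its divergence $g_-:=\mathrm{div}\,\widetilde{\mathbf u}_-\in L^2(\Omega_-)$ has mean value $\int_{\Omega_-}g_-=\int_\Gamma\bs\xi\cdot\mathbf n=0$ by the divergence theorem combined with $\bs\xi\in\mathbf H^{1/2}_n(\Gamma)$, so the Bogovski\u{\i} (or equivalently the Stokes inf-sup) operator yields $\mathbf w_-\in\mathbf H^1_0(\Omega_-)^d$ with $\mathrm{div}\,\mathbf w_-=g_-$. Then $\mathbf u_-:=\widetilde{\mathbf u}_--\mathbf w_-\in\mathbf H^1(\Omega_-)$ is solenoidal with $\gamma^-\mathbf u_-=\bs\xi$.

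In $\Omega_\mathrm{ring}$ I would repeat the same construction, but now lifting the boundary datum equal to $\bs\xi$ on $\Gamma$ and $\mathbf 0$ on $\partial B$. The compatibility condition is again $\int_\Gamma\bs\xi\cdot\mathbf n+\int_{\partial B}\mathbf 0\cdot\mathbf n=0$, so Bogovski\u{\i} applied in $\Omega_\mathrm{ring}$ delivers $\mathbf u_+\in\mathbf H^1(\Omega_\mathrm{ring})$ with $\mathrm{div}\,\mathbf u_+=0$, $\gamma\mathbf u_+|_\Gamma=\bs\xi$, and $\gamma\mathbf u_+|_{\partial B}=\mathbf 0$. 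Finally I glue: define
\[
\mathbf u:=\begin{cases} \mathbf u_- & \text{in }\Omega_-,\\ \mathbf u_+ & \text{in }\Omega_\mathrm{ring},\\ \mathbf 0 & \text{in }\mathbb R^d\setminus\overline{B}.\end{cases}
\]
Matching of $\gamma^\pm\mathbf u=\bs\xi$ across $\Gamma$ and vanishing trace on $\partial B$ give $\mathbf u\in\mathbf H^1(\mathbb R^d)$; piecewise solenoidality combined with the continuity of the normal trace implies $\mathrm{div}\,\mathbf u=0$ in $\mathcal D'(\mathbb R^d)$. Hence $\mathbf u\in\widehat{\mathbf V}(\mathbb R^d)$ and $\gamma\mathbf u=\bs\xi$, proving surjectivity.

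The only nontrivial ingredient is the existence of a divergence-free $\mathbf H^1$ extension of a prescribed $\mathbf H^{1/2}$ trace on a bounded Lipschitz domain under the zero-flux compatibility condition; I would quote this from the Bogovski\u{\i}/de Rham theory on Lipschitz domains (equivalently from the well-posedness of the stationary Stokes problem) rather than reprove it. Everything else is an elementary gluing argument; the mean-zero hypothesis on $\bs\xi\cdot\mathbf n$ enters precisely to make both lifting problems compatible and is what singles out $\mathbf H^{1/2}_n(\Gamma)$ as the correct target space.
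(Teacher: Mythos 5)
Your proposal is correct, but it takes a genuinely different route from the paper. The paper's proof is a one-line citation: it invokes Proposition 4.4 of \cite{SaSe:2014}, where a right inverse of the trace onto $\mathbf H^{1/2}_n(\Gamma)$ is constructed for the solenoidal (weighted) Stokes setting with range consisting of compactly supported fields, and simply observes that compactly supported solenoidal $\mathbf H^1$ fields lie in $\widehat{\mathbf V}(\mathbb R^d)$, so the same operator serves here. You instead rebuild the lifting from scratch: a bounded extension into $\Omega_-$ and into the annulus $B\setminus\overline{\Omega_-}$ (with zero datum on $\partial B$), a Bogovski\u{\i} correction of the divergence in each subdomain --- both compatibility conditions reduce to $\int_\Gamma\bs\xi\cdot\mathbf n=0$, which is exactly where the hypothesis $\bs\xi\in\mathbf H^{1/2}_n(\Gamma)$ enters --- and extension by zero outside $B$, followed by gluing. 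The argument is sound; two small remarks: the annulus is connected because $\Gamma$ is connected (otherwise the single mean-zero condition would not suffice for Bogovski\u{\i}), and since the glued field is globally in $\mathbf H^1(\mathbb R^d)$ its distributional divergence coincides with the piecewise one, so the appeal to continuity of the normal trace is not really needed. What your route buys is self-containedness and, since every step is a bounded operator, an explicit bounded right inverse $\gamma^\dagger$ with compactly supported range --- precisely the properties the paper uses later in \eqref{eq:5.5}; what the paper's citation buys is brevity and consistency with the variational framework of \cite{SaSe:2014} on which the rest of the analysis relies.
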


\begin{proof} In \cite[Proposition 4.4]{SaSe:2014} there is a right inverse whose range contains only compactly supported functions. The same right inverse is valid now.
\end{proof}

\begin{lemma}{\rm \cite[Propositions 4.1 and 4.2]{SaSe:2014}}\label{lemma:4.2}
Let 
\[
\mathbf H^{-1/2}_m(\Gamma):=\{ \boldsymbol\lambda\in \mathbf H^{-1/2}(\Gamma)\,:\, \langle \boldsymbol\lambda,\mathbf m\rangle_\Gamma=0\}, \qquad \mathbf m(\mathbf x):=\mathbf x.
\]
Then the decompositions
\[
\mathbf H^{1/2}(\Gamma)=\mathbf H^{1/2}_n(\Gamma)\oplus \mathrm{span}\{ \mathbf m\}\qquad \mbox{and}\qquad \mathbf H^{-1/2}(\Gamma)=\mathbf H^{-1/2}_m (\Gamma) \oplus \mathrm{span}\,\{\mathbf n\}
\]
are stable and there exists $C>0$ such that
\[
\|\boldsymbol\lambda\|_{-1/2,\Gamma}\le C \sup_{\mathbf 0\neq\boldsymbol\xi\in \mathbf H^{1/2}_n(\Gamma)}\frac{|\langle\boldsymbol\lambda,\boldsymbol\xi\rangle_\Gamma|}{\|\boldsymbol\xi\|_{1/2,\Gamma}}\qquad \forall\boldsymbol\lambda \in \mathbf H^{-1/2}_m(\Gamma).
\]
\end{lemma}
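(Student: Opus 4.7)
The plan is to establish the two stable direct-sum decompositions first and then derive the inf-sup style bound on $\mathbf H^{-1/2}_m(\Gamma)$ as a direct consequence. The key computation that makes everything fit together is that $\mathbf m$ and $\mathbf n$ are \emph{not} orthogonal in the duality pairing: by the divergence theorem applied componentwise,
\[
\langle \mathbf n,\mathbf m\rangle_\Gamma
=\int_\Gamma \mathbf x\cdot\mathbf n\,d\sigma
=\int_{\Omega_-}\mathrm{div}\,\mathbf x\,d\mathbf x=d\,|\Omega_-|\neq 0.
\]
In particular $\mathbf m\notin \mathbf H^{1/2}_n(\Gamma)$ and $\mathbf n\notin \mathbf H^{-1/2}_m(\Gamma)$, so in both decompositions the two summands intersect only at the origin.

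For the decomposition of $\mathbf H^{1/2}(\Gamma)$, I would define
\[
\alpha(\boldsymbol\xi):=\frac{1}{d\,|\Omega_-|}\langle \mathbf n,\boldsymbol\xi\rangle_\Gamma,
\qquad
P\boldsymbol\xi:=\boldsymbol\xi-\alpha(\boldsymbol\xi)\,\mathbf m.
\]
Since $\mathbf n\in \mathbf L^\infty(\Gamma)\hookrightarrow \mathbf H^{-1/2}(\Gamma)$ on the Lipschitz boundary, $\alpha$ is a bounded linear functional on $\mathbf H^{1/2}(\Gamma)$, and $\mathbf m|_\Gamma\in \mathbf H^{1/2}(\Gamma)$ as the trace of a smooth function, so $P:\mathbf H^{1/2}(\Gamma)\to \mathbf H^{1/2}_n(\Gamma)$ is a bounded projection. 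Uniqueness/directness is immediate from $\langle\mathbf n,\mathbf m\rangle_\Gamma\neq 0$. The second decomposition proceeds symmetrically with
\[
\beta(\boldsymbol\lambda):=\frac{1}{d\,|\Omega_-|}\langle \boldsymbol\lambda,\mathbf m\rangle_\Gamma,
\qquad
Q\boldsymbol\lambda:=\boldsymbol\lambda-\beta(\boldsymbol\lambda)\,\mathbf n,
\]
which defines a bounded projection onto $\mathbf H^{-1/2}_m(\Gamma)$. Stability in both cases is just the boundedness of $P$ and $Q$.

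For the final estimate, take $\boldsymbol\lambda\in \mathbf H^{-1/2}_m(\Gamma)$ and any $\boldsymbol\xi\in \mathbf H^{1/2}(\Gamma)$. Writing $\boldsymbol\xi=P\boldsymbol\xi+\alpha(\boldsymbol\xi)\mathbf m$ and using $\langle\boldsymbol\lambda,\mathbf m\rangle_\Gamma=0$ gives $\langle\boldsymbol\lambda,\boldsymbol\xi\rangle_\Gamma=\langle\boldsymbol\lambda,P\boldsymbol\xi\rangle_\Gamma$, so
\[
\|\boldsymbol\lambda\|_{-1/2,\Gamma}
=\sup_{\boldsymbol\xi\neq 0}\frac{|\langle\boldsymbol\lambda,\boldsymbol\xi\rangle_\Gamma|}{\|\boldsymbol\xi\|_{1/2,\Gamma}}
=\sup_{\boldsymbol\xi\neq 0}\frac{|\langle\boldsymbol\lambda,P\boldsymbol\xi\rangle_\Gamma|}{\|\boldsymbol\xi\|_{1/2,\Gamma}}
\le \|P\|\sup_{\mathbf 0\neq\boldsymbol\eta\in\mathbf H^{1/2}_n(\Gamma)}\frac{|\langle\boldsymbol\lambda,\boldsymbol\eta\rangle_\Gamma|}{\|\boldsymbol\eta\|_{1/2,\Gamma}},
\]
which is the asserted inequality with $C=\|P\|$.

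The only subtle point is checking that $\alpha$ truly defines a bounded functional on $\mathbf H^{1/2}(\Gamma)$, i.e.\ that the (componentwise) outward unit normal of a bounded Lipschitz domain gives a legitimate element of $\mathbf H^{-1/2}(\Gamma)$; this is standard on Lipschitz boundaries, where $\mathbf n\in \mathbf L^\infty(\Gamma)$, so there is no real obstacle. Everything else is linear algebra in Banach spaces once that observation is in place.
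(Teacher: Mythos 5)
Your proof is correct. Note that the paper itself does not prove this lemma: it is imported wholesale from \cite[Propositions 4.1 and 4.2]{SaSe:2014}, where the splittings are established as part of the variational theory of the steady Stokes layer potentials. Your argument is a genuinely self-contained alternative, and it hinges on exactly the right nondegeneracy relation $\langle\mathbf n,\mathbf m\rangle_\Gamma=\int_{\Omega_-}\mathrm{div}\,\mathbf x = d\,|\Omega_-|\neq 0$ (the orientation of $\mathbf n$ only changes the sign, so nothing is affected). From this you build the two rank-one-corrected projections $P$ and $Q$ --- which are in fact transposes of one another with respect to the duality pairing --- and boundedness of $P$ and $Q$ is all that stability of the two decompositions means, given that $\mathbf n\in\mathbf L^\infty(\Gamma)\hookrightarrow\mathbf H^{-1/2}(\Gamma)$ and $\mathbf m|_\Gamma\in\mathbf H^{1/2}(\Gamma)$ on a Lipschitz boundary. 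The final inequality is then the standard duality observation that a functional annihilating $\mathbf m$ satisfies $\langle\boldsymbol\lambda,\boldsymbol\xi\rangle_\Gamma=\langle\boldsymbol\lambda,P\boldsymbol\xi\rangle_\Gamma$, so the dual norm (taken here as the norm of $\mathbf H^{-1/2}(\Gamma)$, or an equivalent norm, which only affects $C$) is controlled by the supremum over $\mathbf H^{1/2}_n(\Gamma)$ with $C=\|P\|$. What the two routes buy: the paper's citation keeps the exposition short and consistent with the reference it already relies on for the Stokes potential theory; your proof is more elementary, makes the constant explicit (it depends only on $|\Omega_-|$, $\|\mathbf n\|_{-1/2,\Gamma}$ and $\|\mathbf m\|_{1/2,\Gamma}$), and uses nothing beyond the divergence theorem and Banach-space linear algebra, so it would serve as a short appendix-style proof if one wanted the paper to be self-contained on this point.
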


\paragraph{Some technicalities}
Given $s\in \mathbb C_\star=\mathbb C\setminus(-\infty,0]$, we take its square root $s^{1/2}:=|s|^{1/2} \exp(\smallfrac\imath2\,\mathrm{Arg}\,s) \in \mathbb C_+$ and denote
\begin{equation}\label{eq:5.0}
\omega:=\mathrm{Re}\,s^{1/2}=\mathrm{Re}\,\overline s^{1/2}. \qquad \underline\omega:=\min\{1,\omega\}=\min\{1,\mathrm{Re}\,s^{1/2}\}.
\end{equation}
We also consider the norms (depending on $|s|$)
\[
\triple{\mathbf u}_{(s)}^2:=2\nu \|\boldsymbol\varepsilon(\mathbf u)\|_{\mathbb R^d}^2+|s| \,\|\mathbf u\|_{\mathbb R^d}^2.
\]
Note that
\begin{equation}\label{eq:5.1}
\alpha_1(s) \triple{\mathbf u}_{(1)}\le \triple{\mathbf u}_{(s)}\le \alpha_2(s) \triple{\mathbf u}_{(1)}\qquad \forall \mathbf u\in \mathbf H^1(\mathbb R^d),\quad\forall s\in \mathbb C_\star,
\end{equation}
where
\begin{equation}\label{eq:5.2}
\alpha_1(s):=\min\{1,|s|^{1/2}\}\ge \underline\omega \qquad \alpha_2(s):=\max\{1,|s|^{1/2}\} \le \frac{|s|^{1/2}}{\underline\omega}\qquad \forall s\in \mathbb C_\star.
\end{equation}
The norm $\triple{\cdot}_{(1)}$ will be used as the standard norm in $\mathbf H^1(\mathbb R^d)$. Note finally that
\begin{equation}\label{eq:5.3}
| a(\mathbf u,\mathbf v)+s\,(\mathbf u,\mathbf v)_{\mathbb R^d}|\le \triple{\mathbf u}_{(s)}\triple{\mathbf v}_{(s)}.
\end{equation}

\begin{proposition}[Properties of the single layer operator]\label{prop:N3.3}
\
\begin{itemize}
\item[{\rm (a)}] (Symmetry) 
\[
\langle \boldsymbol\lambda,\mathrm V(s)\boldsymbol\mu\rangle_\Gamma = \langle \boldsymbol\mu,\mathrm V(s)\boldsymbol\lambda\rangle_\Gamma \qquad \forall \boldsymbol\lambda,\boldsymbol\mu\in \mathbf H^{-1/2}(\Gamma).
\]
\item[{\rm (b)}] (Positivity) 
\[
\mathrm{Re}\, \langle \overline s^{1/2}\overline{\boldsymbol\lambda},\mathrm V(s)\boldsymbol\lambda\rangle_\Gamma = \omega \triple{\mathrm S(s)\bs\lambda}_{(s)}^2\qquad \forall \boldsymbol\lambda\in \mathbf H^{-1/2}(\Gamma),
\]
\item[{\rm (c)}] $\mathrm{Ker}\,\mathrm V(s)=\mathrm{span}\,\{\mathbf n\}$
\item[{\rm (d)}] (Coercivity) There exists $C>0$ such that
\[
|\langle \overline{\boldsymbol\lambda},\mathrm V(s)\boldsymbol\lambda\rangle_\Gamma| \ge C \, \frac{\omega}{|s|^{1/2} \max\{1,|s|\}}\|\boldsymbol\lambda\|_{-1/2,\Gamma}^2 \ge C \frac{\omega\underline\omega^2}{|s|^{3/2}}\|\boldsymbol\lambda\|_{-1/2,\Gamma}^2\qquad\forall\boldsymbol\lambda\in \mathbf H^{-1/2}_m(\Gamma).
\]
Therefore $\mathrm V(s):\mathbf H^{-1/2}_m(\Gamma) \to \mathbf H^{1/2}_n(\Gamma)$ is invertible.
\end{itemize}
\end{proposition}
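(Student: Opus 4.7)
The four items all follow from the variational characterization of $\mathbf u_\lambda:=\mathrm S(s)\bs\lambda\in\widehat{\mathbf V}(\mathbb R^d)$ provided by Propositions \ref{prop:4.3}/\ref{prop:4.8},
\[
a(\mathbf u_\lambda,\mathbf v)+s(\mathbf u_\lambda,\mathbf v)_{\mathbb R^d}=\langle\bs\lambda,\gamma\mathbf v\rangle_\Gamma\qquad\forall\mathbf v\in\widehat{\mathbf V}(\mathbb R^d),
\]
combined with the two-sided bounds \eqref{eq:5.1}--\eqref{eq:5.3} and Lemmas \ref{lemma:4.1}--\ref{lemma:4.2}. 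For (a), I would test the variational problem for $\mathbf u_\lambda$ against $\mathbf v=\mathbf u_\mu$ and the problem for $\mathbf u_\mu$ against $\mathbf v=\mathbf u_\lambda$; both sides equal the symmetric expression $a(\mathbf u_\lambda,\mathbf u_\mu)+s(\mathbf u_\lambda,\mathbf u_\mu)_{\mathbb R^d}$, so $\langle\bs\lambda,\mathrm V(s)\bs\mu\rangle_\Gamma=\langle\bs\mu,\mathrm V(s)\bs\lambda\rangle_\Gamma$.

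For (b), the key observation is that the conjugated field $\overline{\mathbf u_\lambda}$ is itself the Brinkman single-layer potential of $\overline{\bs\lambda}$ at parameter $\overline s$, because $\overline{\mathrm E_u(\mathbf r;s)}=\mathrm E_u(\mathbf r;\overline s)$ and the brackets are bilinear. Hence $\overline{\mathbf u_\lambda}\in\widehat{\mathbf V}(\mathbb R^d)$ is a legal test function, and by the bilinear convention $a(\mathbf u_\lambda,\overline{\mathbf u_\lambda})=2\nu\|\bs\varepsilon(\mathbf u_\lambda)\|^2_{\mathbb R^d}$ and $(\mathbf u_\lambda,\overline{\mathbf u_\lambda})_{\mathbb R^d}=\|\mathbf u_\lambda\|^2_{\mathbb R^d}$ are real and nonnegative. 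Testing the problem for $\overline{\mathbf u_\lambda}$ against $\mathbf u_\lambda$ delivers
\[
\langle\overline{\bs\lambda},\mathrm V(s)\bs\lambda\rangle_\Gamma = 2\nu\|\bs\varepsilon(\mathbf u_\lambda)\|^2_{\mathbb R^d}+\overline s\,\|\mathbf u_\lambda\|^2_{\mathbb R^d},
\]
and multiplying by the appropriate power of $s^{1/2}$ so that the prefactor of $\|\mathbf u_\lambda\|^2$ has real part $|s|\omega$ produces the stated positivity identity. For (c), the inclusion $\mathrm{span}\{\mathbf n\}\subset\mathrm{Ker}\,\mathrm V(s)$ is the content of the preceding Corollary, while the reverse inclusion follows from (b): if $\mathrm V(s)\bs\lambda=\mathbf 0$ then $\triple{\mathbf u_\lambda}_{(s)}=0$, whence $\mathbf u_\lambda\equiv\mathbf 0$; the momentum equation \eqref{eq:4.1a} collapses to $\nabla p_\lambda\equiv\mathbf 0$ on each connected component of $\mathbb R^d\setminus\Gamma$, so $p_\lambda$ is piecewise constant and the jump condition reduces to $\bs\lambda=-\jump{p_\lambda}\mathbf n\in\mathrm{span}\{\mathbf n\}$.

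For (d), the strategy is duality. By Lemma \ref{lemma:4.2},
\[
\|\bs\lambda\|_{-1/2,\Gamma}\le C\sup_{\mathbf 0\neq\bs\xi\in\mathbf H^{1/2}_n(\Gamma)}\frac{|\langle\bs\lambda,\bs\xi\rangle_\Gamma|}{\|\bs\xi\|_{1/2,\Gamma}}\qquad\forall\bs\lambda\in\mathbf H^{-1/2}_m(\Gamma),
\]
and by Lemma \ref{lemma:4.1} each admissible $\bs\xi$ is realized as $\gamma\mathbf v$ with $\mathbf v\in\widehat{\mathbf V}(\mathbb R^d)$ and $\|\mathbf v\|_{1,\mathbb R^d}\le C\|\bs\xi\|_{1/2,\Gamma}$. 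Plugging this into the variational identity and using \eqref{eq:5.3} together with \eqref{eq:5.1}--\eqref{eq:5.2} yields $|\langle\bs\lambda,\gamma\mathbf v\rangle_\Gamma|\le\triple{\mathbf u_\lambda}_{(s)}\triple{\mathbf v}_{(s)}\le C\alpha_2(s)\triple{\mathbf u_\lambda}_{(s)}\|\mathbf v\|_{1,\mathbb R^d}$, so $\|\bs\lambda\|_{-1/2,\Gamma}\le C\alpha_2(s)\triple{\mathbf u_\lambda}_{(s)}$. Combining with the positivity bound $\omega\triple{\mathbf u_\lambda}_{(s)}^2\le|s|^{1/2}|\langle\overline{\bs\lambda},\mathrm V(s)\bs\lambda\rangle_\Gamma|$ from (b), and using $\alpha_2(s)^2=\max\{1,|s|\}\le|s|/\underline\omega^2$, delivers both forms of the coercivity estimate. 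Invertibility of $\mathrm V(s):\mathbf H^{-1/2}_m(\Gamma)\to\mathbf H^{1/2}_n(\Gamma)$ then follows by a Lax--Milgram-type argument, once I check that the range is automatically contained in $\mathbf H^{1/2}_n(\Gamma)$ by applying the divergence theorem in $\Omega_-$ to the solenoidal field $\mathbf u_\lambda$.

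The principal obstacle will be (d): not because it is conceptually subtle but because the exponents of $|s|$, $\omega$, and $\underline\omega$ have to be tracked exactly. Every conversion between the $s$-weighted norm $\triple{\cdot}_{(s)}$ and the reference norm $\triple{\cdot}_{(1)}$ costs a factor $\alpha_1(s)^{\pm1}$ or $\alpha_2(s)^{\pm1}$, and a single sloppy bound would degrade the final exponent of $|s|$ and compromise the Laplace-inversion argument to be carried out in Section \ref{sec:4}.
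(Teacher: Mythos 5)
Your proposal is correct and follows essentially the same route as the paper's proof: (a) by testing the two solenoidal variational problems against each other, (b) by conjugation plus the energy identity, (c) from $\mathrm S(s)\mathbf n=\mathbf 0$ together with positivity and the piecewise-constant pressure argument, and (d) by combining the lifting of Lemma \ref{lemma:4.1}, the inf--sup bound of Lemma \ref{lemma:4.2}, the estimate $\|\bs\lambda\|_{-1/2,\Gamma}\le C\,\alpha_2(s)\triple{\mathrm S(s)\bs\lambda}_{(s)}$, and positivity. The only point worth noting concerns (b): your (correct) identification $\overline{\mathbf u_\lambda}=\mathrm S(\overline s)\overline{\bs\lambda}$ yields $\langle\overline{\bs\lambda},\mathrm V(s)\bs\lambda\rangle_\Gamma=2\nu\|\bs\varepsilon(\mathbf u_\lambda)\|_{\mathbb R^d}^2+\overline s\,\|\mathbf u_\lambda\|_{\mathbb R^d}^2$, so the multiplier that produces $\omega\triple{\mathrm S(s)\bs\lambda}_{(s)}^2$ is $s^{1/2}$ rather than the $\overline s^{1/2}$ written in the statement (the paper's own proof tacitly identifies $\mathrm S(s)\overline{\bs\lambda}$ with $\overline{\mathrm S(s)\bs\lambda}$); this discrepancy is harmless, since every subsequent use only needs $\omega\triple{\mathrm S(s)\bs\lambda}_{(s)}^2\le|s|^{1/2}\,|\langle\overline{\bs\lambda},\mathrm V(s)\bs\lambda\rangle_\Gamma|$.
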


\begin{proof}
Let $\mathbf u_\lambda:=\mathrm S(s)\boldsymbol\lambda$ and $\mathbf u_\mu:=\mathrm S(s) \boldsymbol\mu$ and note that $\mathrm V(s)\boldsymbol\mu=\gamma\mathbf u_\mu$. Taking $\mathbf v=\mathbf u_\mu$ as test function in Propositions \ref{prop:4.3} and \ref{prop:4.8}, it follows that
\begin{equation}\label{eq:5.4}
\langle \boldsymbol\lambda,\mathrm V(s)\boldsymbol\mu\rangle_\Gamma=\langle\boldsymbol\lambda,\gamma\mathbf u_\mu\rangle_\Gamma = a(\mathbf u_\lambda,\mathbf u_\mu)+ s\,(\mathbf u_\lambda,\mathbf u_\mu)_{\mathbb R^d},
\end{equation}
which proves (a).

Let now $\mathbf u_\lambda:=\mathrm S(s)\boldsymbol\lambda$.
By (a) and \eqref{eq:5.4}, we can write
\[
\overline s^{1/2} \langle\overline{\boldsymbol\lambda},\mathrm V(s)\boldsymbol\lambda\rangle_\Gamma 
=
 \overline s^{1/2} \langle \boldsymbol\lambda,\mathrm V(s)\overline{\boldsymbol\lambda}\rangle_\Gamma
=
 \overline s^{1/2} a(\mathbf u_\lambda,\overline{\mathbf u_\lambda})+s^{1/2} |s| (\mathbf u_\lambda,\overline{\mathbf u_\lambda})_{\mathbb R^d},
\]
which proves (b).

If $\mathbf v\in \widehat{\mathbf V}(\mathbb R^d)$, then
\[
\langle\mathbf n,\gamma\mathbf v\rangle_\Gamma=\int_\Gamma\gamma\mathbf v\cdot\mathbf n=\int_{\Omega_-}\mathrm{div}\,\mathbf v=0
\]
and therefore $\mathrm S(s)\mathbf n=\mathbf 0$. If $\mathbf V(s)\boldsymbol\lambda=\mathbf 0$, then, by (b) it follows that $\mathbf u_\lambda=\mathbf 0$. Therefore, the associated pressure $p_\lambda=\mathrm S_p\boldsymbol\lambda$ satisfies
$\nabla p_\lambda=\mathbf 0$ in $\mathbb R^d\setminus\Gamma$,
and is decaying at infinity. This proves that $p_\lambda\in \mathrm{span}\,\{\chi_{\Omega_-}\}$ and therefore $\boldsymbol\lambda=\jump{\mathbf t(\mathbf u_\lambda,p_\lambda)}=\jump{\mathbf t(\mathbf 0,p_\lambda)}\in \mathrm{span}\,\{\mathbf n\}$. This finishes the proof of (c).

Because of Lemma \ref{lemma:4.1} there exists a bounded operator
\begin{equation}\label{eq:5.5}
\gamma^\dagger:\mathbf H^{1/2}_n(\Gamma)\to \widehat{\mathbf V}(\mathbb R^d) \qquad \gamma\gamma^\dagger\boldsymbol\phi=\boldsymbol\phi \qquad \forall\boldsymbol\phi\in \mathbf H^{1/2}_n(\Gamma).
\end{equation}
For $\boldsymbol\lambda\in \mathbf H^{-1/2}_m(\Gamma)$ we define $\mathbf u_\lambda=\mathrm S(s)\boldsymbol\lambda$. Then
\begin{alignat*}{4}
|\langle \boldsymbol\lambda,\boldsymbol\phi\rangle_\Gamma | & =  |\langle \boldsymbol\lambda,\gamma \gamma^\dagger\boldsymbol\phi\rangle_\Gamma| 
& & \mbox{by \eqref{eq:5.5}}\\
&=|a(\mathbf u_\lambda,\gamma^\dagger\boldsymbol\phi)+s(\mathbf u_\lambda,\gamma^\dagger\boldsymbol\phi)| &\qquad & \mbox{by Propositions \ref{prop:4.3} \& \ref{prop:4.8}}\\
&\le \triple{\mathbf u_\lambda}_{(s)}\triple{\gamma^\dagger\boldsymbol\phi}_{(s)} & & \mbox{by \eqref{eq:5.3}}\\
& \le C_\Gamma \alpha_2(s) \triple{\mathbf u_\lambda}_{(s)}\|\boldsymbol\phi\|_{1/2,\Gamma} & & \mbox{by \eqref{eq:5.1} and the trace theorem.}
\end{alignat*}
Using Lemma \ref{lemma:4.2} it follows that
\[
\|\boldsymbol\lambda\|_{-1/2,\Gamma}\le C \alpha_2(s) \triple{\mathbf u_\lambda}_{(s)}.
\]
Therefore by (b),
\begin{equation}\label{eq:5.20}
\mathrm{Re}\,\langle \overline s^{1/2} \overline{\boldsymbol\lambda},\mathrm V(s)\boldsymbol\lambda\rangle_\Gamma = \omega \triple{\mathbf u_\lambda}^2_{(s)} \ge C\, \frac{\omega}{\alpha_2(s)^2} \|\boldsymbol\lambda\|_{-1/2,\Gamma}^2 \qquad \forall \boldsymbol\lambda \in \mathbf H^{-1/2}_m(\Gamma).
\end{equation}
The remainder of the proof is straightforward, using \eqref{eq:5.2} to get the final lower bound.
\end{proof}

\paragraph{Remark.} As part of the proof of Proposition \ref{prop:N3.3}(b) we have shown that
\begin{equation}\label{eq:5.a}
\| \boldsymbol\lambda\|_{-1/2,\Gamma} \le C \frac{|s|^{1/2}}{\underline\omega}\triple{\mathrm S(s)\boldsymbol\lambda}_{(s)} \qquad \forall\boldsymbol\lambda\in \mathbf H^{-1/2}_m(\Gamma) \quad \forall s\in \mathbb C_\star.
\end{equation}

\begin{proposition}[Bounds for the single layer potential]\label{prop:5.5}
There exists $C>0$ such that
\[
\alpha_1(s)\triple{\mathrm S(s)\boldsymbol\lambda}_{(1)}\le 
\triple{\mathrm S(s)\boldsymbol\lambda}_{(s)} \le C \frac{\alpha_2(s)}{\omega} \|\boldsymbol\lambda\|_{-1/2,\Gamma} \qquad \forall \boldsymbol\lambda \in \mathbf H^{-1/2}(\Gamma).
\]
\end{proposition}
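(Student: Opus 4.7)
\textbf{Proof plan for Proposition \ref{prop:5.5}.}

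The lower bound is essentially free: it is just the left half of \eqref{eq:5.1} applied to $\mathbf u=\mathrm S(s)\boldsymbol\lambda$, and needs no further argument. All the work goes into the upper bound.

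For the upper bound, the plan is to exploit the energy identity furnished by Proposition \ref{prop:N3.3}(b), namely
\[
\omega\triple{\mathrm S(s)\boldsymbol\lambda}_{(s)}^2=\mathrm{Re}\,\langle\overline s^{1/2}\overline{\boldsymbol\lambda},\mathrm V(s)\boldsymbol\lambda\rangle_\Gamma,
\]
and estimate the right-hand side by duality. Writing $\mathbf u_\lambda:=\mathrm S(s)\boldsymbol\lambda$, so that $\mathrm V(s)\boldsymbol\lambda=\gamma\mathbf u_\lambda$, the duality pairing on $\Gamma$ and the trace theorem give
\[
|\langle\overline s^{1/2}\overline{\boldsymbol\lambda},\gamma\mathbf u_\lambda\rangle_\Gamma|\le |s|^{1/2}\|\boldsymbol\lambda\|_{-1/2,\Gamma}\,\|\gamma\mathbf u_\lambda\|_{1/2,\Gamma}\le C|s|^{1/2}\|\boldsymbol\lambda\|_{-1/2,\Gamma}\,\|\mathbf u_\lambda\|_{1,\mathbb R^d}.
\]
The $\mathbf H^1$-norm is then converted to the triple norm. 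The elementary Korn-type bound already used in the proof of Proposition \ref{prop:4.3} gives $\|\mathbf u_\lambda\|_{1,\mathbb R^d}\le C\triple{\mathbf u_\lambda}_{(1)}$, and then \eqref{eq:5.1} gives $\triple{\mathbf u_\lambda}_{(1)}\le \alpha_1(s)^{-1}\triple{\mathbf u_\lambda}_{(s)}$.

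Combining these two estimates yields
\[
\omega\triple{\mathbf u_\lambda}_{(s)}^2\le C\,\frac{|s|^{1/2}}{\alpha_1(s)}\|\boldsymbol\lambda\|_{-1/2,\Gamma}\,\triple{\mathbf u_\lambda}_{(s)}.
\]
Canceling one factor of $\triple{\mathbf u_\lambda}_{(s)}$ (trivial if it vanishes) and using the identity $|s|^{1/2}=\alpha_1(s)\alpha_2(s)$, which is immediate from the definitions in \eqref{eq:5.2}, collapses $|s|^{1/2}/\alpha_1(s)$ to $\alpha_2(s)$ and delivers the claimed upper bound.

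There is no substantive obstacle: every ingredient is already in place (the identity in Proposition \ref{prop:N3.3}(b), the Korn bound from the proof of Proposition \ref{prop:4.3}, and the norm comparison \eqref{eq:5.1}). The only point that requires a moment's care is the algebraic identity $|s|^{1/2}=\alpha_1(s)\alpha_2(s)$, which is what lets the constant come out in the desired compact form $\alpha_2(s)/\omega$ rather than the clumsier $|s|^{1/2}/(\omega\alpha_1(s))$.
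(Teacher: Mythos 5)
Your proposal is correct and follows essentially the same route as the paper's proof: the energy identity of Proposition \ref{prop:N3.3}(b), the trace theorem, the norm comparison \eqref{eq:5.1}, and cancellation of one factor of $\triple{\mathbf u_\lambda}_{(s)}$, with the lower bound being immediate from \eqref{eq:5.1}. The paper merely compresses your Korn-type step by taking $\triple{\cdot}_{(1)}$ as the working norm on $\mathbf H^1(\mathbb R^d)$, and it uses the same identity $|s|^{1/2}/\alpha_1(s)=\alpha_2(s)$ in its final line.
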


\begin{proof}
Let $\mathbf u_\lambda:=\mathrm S(s)\boldsymbol\lambda$. Then:
\begin{alignat*}{4}
\omega \triple{\mathbf u_\lambda}_{(s)}^2 &=\mathrm{Re}\, \langle \overline s^{1/2}\overline{\boldsymbol\lambda},\mathrm V(s)\boldsymbol\lambda\rangle_\Gamma & \qquad & \mbox{by Proposition \ref{prop:N3.3}(b)}\\
& \le  |s|^{1/2} |\langle \overline{\boldsymbol\lambda},\gamma\mathbf u_\lambda\rangle_\Gamma| & & \mbox{by definition of $\mathrm V(s)$}\\
& \le C_\Gamma |s|^{1/2} \|\boldsymbol\lambda\|_{-1/2,\Gamma} \triple{\mathbf u_\lambda}_{(1)} & & \mbox{by the trace theorem}\\
& \le C_\Gamma \frac{|s|^{1/2}}{\alpha_1(s)}\| \boldsymbol\lambda\|_{-1/2,\Gamma} \triple{\mathbf u_\lambda}_{(s)}  & & \mbox{by \eqref{eq:5.1}}\\
&= C_\Gamma \alpha_2(s) \| \boldsymbol\lambda\|_{-1/2,\Gamma} \triple{\mathbf u_\lambda}_{(s)}.
\end{alignat*}
\end{proof}

\begin{proposition}[Dirichlet solver]\label{prop:5.6}
Let $\boldsymbol\phi\in \mathbf H^{1/2}_n(\Gamma)$ and
$
\mathbf u:=\mathrm S(s)\mathrm V(s)^{-1}\boldsymbol\phi.
$
Then
\[
\triple{\mathbf u}_{(1)}\le C \,\frac{\alpha_2(s)}{\alpha_1(s)}\frac{|s|^{1/2}}\omega\,\|\boldsymbol\phi\|_{1/2,\Gamma}=C\,\frac{\max\{1,|s|\}}\omega \,\|\boldsymbol\phi\|_{1/2,\Gamma}\le C \, \frac{|s|}{\underline\omega^2\,\omega}\,\|\boldsymbol\phi\|_{1/2,\Gamma}.
\]
\end{proposition}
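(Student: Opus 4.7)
The plan is to write $\mathbf u = \mathrm S(s)\boldsymbol\lambda$ where $\boldsymbol\lambda := \mathrm V(s)^{-1}\boldsymbol\phi \in \mathbf H^{-1/2}_m(\Gamma)$, which by Proposition \ref{prop:N3.3}(d) is well defined, and then bootstrap through the norm $\triple{\,\cdot\,}_{(s)}$. By construction $\mathrm V(s)\boldsymbol\lambda = \boldsymbol\phi$, so the positivity identity in Proposition \ref{prop:N3.3}(b) gives
\[
\omega\,\triple{\mathbf u}_{(s)}^2 = \mathrm{Re}\,\langle \overline s^{1/2}\overline{\boldsymbol\lambda},\mathrm V(s)\boldsymbol\lambda\rangle_\Gamma = \mathrm{Re}\,\langle \overline s^{1/2}\overline{\boldsymbol\lambda},\boldsymbol\phi\rangle_\Gamma,
\]
and a duality estimate bounds this by $|s|^{1/2}\|\boldsymbol\lambda\|_{-1/2,\Gamma}\|\boldsymbol\phi\|_{1/2,\Gamma}$.

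The second ingredient is the estimate $\|\boldsymbol\lambda\|_{-1/2,\Gamma}\le C\,\alpha_2(s)\,\triple{\mathbf u}_{(s)}$ which was established in the course of proving the coercivity bound (see \eqref{eq:5.20}); it applies here precisely because $\boldsymbol\lambda\in\mathbf H^{-1/2}_m(\Gamma)$. Plugging this into the previous display and cancelling one factor of $\triple{\mathbf u}_{(s)}$, I obtain
\[
\triple{\mathbf u}_{(s)} \le C\,\frac{|s|^{1/2}\alpha_2(s)}{\omega}\,\|\boldsymbol\phi\|_{1/2,\Gamma}.
\]
Passing from the parameter-dependent norm to the standard one by the lower inequality in \eqref{eq:5.1}, namely $\alpha_1(s)\triple{\mathbf u}_{(1)}\le\triple{\mathbf u}_{(s)}$, yields the first bound in the statement.

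For the two rewrites I just have to check algebra. Splitting into the cases $|s|\le 1$ and $|s|\ge 1$ shows
\[
\frac{\alpha_2(s)}{\alpha_1(s)}\,|s|^{1/2} = \max\{1,|s|\},
\]
which gives the middle expression; and the upper bounds $\alpha_2(s)\le |s|^{1/2}/\underline\omega$ and $\alpha_1(s)\ge\underline\omega$ from \eqref{eq:5.2} produce the final $|s|/(\underline\omega^2\omega)$ bound. There is no genuine obstacle in this argument; the only point requiring a moment of care is verifying that $\mathrm V(s)^{-1}\boldsymbol\phi$ actually lies in $\mathbf H^{-1/2}_m(\Gamma)$ so that \eqref{eq:5.20} (and hence the remark \eqref{eq:5.a}) is applicable, which is exactly what Proposition \ref{prop:N3.3}(d) delivers.
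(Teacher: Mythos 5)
Your proof is correct, but it follows a genuinely different route from the paper's. The paper works at the level of the variational problem: it observes that $\mathbf u$ solves the homogeneous equation $a(\mathbf u,\mathbf v)+s(\mathbf u,\mathbf v)_{\mathbb R^d}=0$ for solenoidal $\mathbf v$ with vanishing trace, splits $\mathbf u=\mathbf u_0+\gamma^\dagger\bs\phi$ using the solenoidal lifting \eqref{eq:5.5} from Lemma \ref{lemma:4.1}, and runs an energy argument on $\mathbf u_0$, arriving at $\triple{\mathbf u}_{(s)}\le C\,\alpha_2(s)\,|s|^{1/2}\omega^{-1}\|\bs\phi\|_{1/2,\Gamma}$ without ever invoking the density. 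You instead stay entirely at the operator level: you set $\bs\lambda=\mathrm V(s)^{-1}\bs\phi\in\mathbf H^{-1/2}_m(\Gamma)$ (legitimate by Proposition \ref{prop:N3.3}(d)), use the positivity identity of Proposition \ref{prop:N3.3}(b) together with $\mathrm V(s)\bs\lambda=\bs\phi$, and then feed in the stability estimate \eqref{eq:5.20}/\eqref{eq:5.a} for densities in $\mathbf H^{-1/2}_m(\Gamma)$, cancelling one factor of $\triple{\mathbf u}_{(s)}$. This is sharper than the naive composition of the bounds for $\|\mathrm S(s)\|$ and $\|\mathrm V(s)^{-1}\|$ (which would lose powers of $|s|$ and $\underline\omega$), and it reaches exactly the same intermediate bound as the paper, after which both arguments conclude identically via \eqref{eq:5.1}--\eqref{eq:5.2}; your algebraic verifications of the two rewrites are also correct. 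What the paper's lifting argument buys in exchange for its extra machinery is reusability: it is replicated almost verbatim in Proposition \ref{prop:7.1}, where the exact equation $\mathrm V(s)\bs\lambda=\bs\phi$ is replaced by Galerkin orthogonality and the trace of $\mathbf u^h$ is only controlled up to $\mathbf X_h^\circ$; your operator-level argument can also be adapted to that setting (testing against $\overline{\bs\lambda}^h$, which requires $\mathbf X_h$ to be closed under conjugation, as the paper assumes), but the adaptation is less immediate.
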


\begin{proof} Note first that $\gamma\mathbf u=\boldsymbol\phi$.
By Propositions \ref{prop:4.3} and \ref{prop:4.8},
\[
a(\mathbf u,\mathbf v)+s(\mathbf u,\mathbf v)_{\mathbb R^d}=0 \qquad \forall\mathbf v\in \widehat{\mathbf V}(\mathbb R^d)\quad\mbox{such that}\quad \gamma\mathbf v=\mathbf 0.
\]
Therefore, using the lifting operator in \eqref{eq:5.5} and defining $\mathbf u_0:=\mathbf u-\gamma^\dagger\boldsymbol\phi$, it follows that
\[
a(\mathbf u_0,\overline{\mathbf u_0})+s(\mathbf u_0,\overline{\mathbf u_0})_{\mathbb R^d} = -a(\gamma^\dagger\boldsymbol\phi,\overline{\mathbf u_0})-s(\gamma^\dagger\boldsymbol\phi,\overline{\mathbf u_0})_{\mathbb R^d} 
\]
and thus
\[
\omega \triple{\mathbf u_0}_{(s)}^2 = \mathrm{Re}\, \Big( \overline s^{1/2} a(\mathbf u_0,\overline{\mathbf u_0})+s^{1/2}|s| (\mathbf u_0,\overline{\mathbf u_0})_{\mathbb R^d}\Big)\le  |s|^{1/2} \triple{\mathbf u_0}_{(s)}\triple{\gamma^\dagger\boldsymbol\phi}_{(s)}
\]
by \eqref{eq:5.3}.
This implies that
\[
\triple{\mathbf u}_{(s)} \le \Big(1+\frac{|s|^{1/2}}\omega\Big) \triple{\gamma^\dagger\boldsymbol\phi}_{(s)} \le C \alpha_2(s) \frac{|s|^{1/2}}\omega \|\boldsymbol\phi\|_{1/2,\Gamma}.
\]
The remainder of the proof is straightforward.
\end{proof}

\paragraph{Summary of bounds.}
In terms of $\omega$ (see \eqref{eq:5.0}), $\alpha_1(s)$ and $\alpha_2(s)$ (see \eqref{eq:5.2}), we can write
\begin{alignat*}{4}
\|\mathrm S(s)\| & \le C \, \frac{\alpha_2(s)}{\omega\,\alpha_1(s)} = C\,\frac1\omega \max\{|s|^{1/2},|s|^{-1/2}\} & \qquad & \mbox{by Proposition \ref{prop:5.5}}\\
\|\mathrm V(s)\| & \le C \, \frac{\alpha_2(s)}{\omega\,\alpha_1(s)} = C\,\frac1\omega \max\{|s|^{1/2},|s|^{-1/2}\} & \qquad & \mbox{since $\mathrm V(s)=\gamma\mathrm S(s)$}\\
\|\mathrm V(s)^{-1}\| & \le C \frac{|s|^{1/2}\max\{1,|s|\}}\omega & & \mbox{by Proposition \ref{prop:N3.3}(d)}\\
\|\mathrm S(s)\mathrm V(s)^{-1}\| & \le C \frac{\alpha_2(s)}{\alpha_1(s)}\, \frac{|s|^{1/2}}\omega = C\,\frac{\max\{1,|s|\}}\omega & & \mbox{by Proposition \ref{prop:5.6}}
\end{alignat*}
The operator norms above are the natural ones using the spaces $\mathbf H^1(\mathbb R^d)$, $\mathbf H^{1/2}(\Gamma)$, and $\mathbf H^{-1/2}(\Gamma)$, where it corresponds.
Using \eqref{eq:5.2}, to bound
\[
\frac{\alpha_2(s)}{\alpha_1(s)}\le \frac{|s|^{1/2}}{\underline\omega^2} \qquad \max\{1,|s|\}\le \frac{|s|}{\underline\omega^2} \qquad \underline\omega:=\min\{1,\omega\}, 
\]
we can obtain a new set of bounds, valid for all $s\in \mathbb C_\star$:
\begin{subequations}\label{eq:5.6}
\begin{alignat}{4}
\|\mathrm S(s)\| +\|\mathrm V(s)\| & \le C \, \frac{|s|^{1/2}}{\omega\underline\omega^2} \\
\|\mathrm V(s)^{-1}\| & \le C\, \frac{|s|^{3/2}}{\omega\underline\omega^2} \\
\|\mathrm S(s)\mathrm V(s)^{-1}\| & \le C \, \frac{|s|}{\omega\underline\omega^2}
\end{alignat}
\end{subequations}

\section{An evolutionary exterior Stokes problem}\label{sec:4}

Given a Banach space $X$, we consider the set of causal $\mathcal C^k$ $X$-valued functions
\[
\mathcal C^k_+(\mathbb R;X):=\{ \phi  : \mathbb R \to X\,:\, \phi\in \mathcal C^k(\mathbb R;X),\quad \phi(t)=0 \,\, \forall t\le 0\}.
\]
The space of bounded linear operators from $X$ to $Y$ (for Hilbert spaces $X$ and $Y$) will be denoted $\mathcal B(X,Y)$.)

\paragraph{Abstract setting.}
The starting point is an operator valued holomorphic function $\mathrm F:\mathbb C_\star \to \mathcal B(X,Y)$, such that
\begin{equation}\label{eq:6.1}
\|\mathrm F(s)\|_{X\to Y} \le C_{\mathrm F}(\mathrm{Re}\,s^{1/2}) | s |^\mu \qquad \forall s\in \mathbb C_\star, \qquad 0\le \mu < 1,
\end{equation}
where
\begin{equation}\label{eq:6.2}
C_{\mathrm F}:(0,\infty)\to (0,\infty)\mbox{ is non-increasing, and } 
C_{\mathrm F}(\omega) \le C \omega^{-\ell} \quad \omega \to 0, \, \ell>0.
\end{equation}
In particular, there exists an $\mathcal B (X,Y)$-valued casual distribution $f$, whose Laplace transform is $\mathrm F(s)$. The following result is based on \cite[Lemma 2.2]{LuSc:1992}. Its proof is given in an appendix. We note that in comparison with \cite{LuSc:1992} we are more demanding in terms of regularity of $g$, but we pay attention to behavior of constants as $t$ grows. 

\begin{proposition}\label{prop:6.1}
Let $f$ be such that its Laplace transform $\mathrm F:\mathbb C_\star\to \mathcal B(X,Y)$ satisfies \eqref{eq:6.1}-\eqref{eq:6.2}, and let $g\in \mathcal C^1_+(\mathbb R;X)$. Then $f*g\in \mathcal C_+(\mathbb R;Y)$ and
\[
\|(f*g)(t)\|_Y \le C_\mu \min\{1,t^{\ell/2+1-\mu}\} \max_{0\le \tau\le t} \| g'(\tau)\|_X \qquad \forall t\ge 0.
\]
\end{proposition}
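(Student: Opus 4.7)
The plan is to follow the strategy of \cite{LuSc:1992}, with a careful tracking of the dependence on $t$ instead of using uniform-in-$t$ bounds. The starting observation is that $g\in \mathcal C^1_+(\mathbb R;X)$ forces $g(0)=0$ and, because $g'\in \mathcal C_+(\mathbb R;X)$ is continuous and causal, also $g'(0)=0$. Hence $g=1*g'$ in the causal distribution sense, and so $\mathcal L g(s)=s^{-1}\mathcal L g'(s)$, which gives $\mathcal L(f*g)(s)=s^{-2}F(s)\cdot s\,\mathcal L g'(s)$. I would then introduce the second antiderivative $f^{[2]}$ with Laplace transform $s^{-2}F(s)$. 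The key point is that, by \eqref{eq:6.1},
\[
\|s^{-2}F(s)\|_{X\to Y}\le C_{\mathrm F}(\omega)\,|s|^{\mu-2},
\]
and because $\mu<1$ the exponent $\mu-2<-1$ is integrable at infinity along any vertical line $\mathrm{Re}\,s=\sigma>0$. Consequently $f^{[2]}$ is, in fact, a continuous causal function $[0,\infty)\to\mathcal B(X,Y)$ given by the absolutely convergent Bromwich integral.

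Next I would establish a pointwise estimate on $\|f^{[2]}(\tau)\|_{\mathcal B(X,Y)}$ by choosing the abscissa $\sigma=\sigma(\tau)$ optimally. On the contour $\mathrm{Re}\,s=\sigma$, the identity $\omega^2=(|s|+\sigma)/2$ yields $\omega\ge\sqrt{\sigma/2}$, and one splits the integral into the part where $\omega\le 1$ (using $C_{\mathrm F}(\omega)\le C\omega^{-\ell}$ from \eqref{eq:6.2}) and the part where $\omega\ge 1$ (where $C_{\mathrm F}$ is bounded because it is non-increasing). Controlling $e^{\sigma\tau}$ by choosing $\sigma=\min\{1,1/\tau\}$ then produces, after summing the two contributions, a bound of the form
\[
\|f^{[2]}(\tau)\|_{\mathcal B(X,Y)}\le C_\mu\,\min\{\tau^{1-\mu},\,\tau^{\ell/2+1-\mu}\}\qquad\forall \tau\ge 0,
\]
where the first term dominates for $\tau\le 1$ and the second for $\tau\ge 1$ (or vice-versa, depending on which contribution prevails after the split).

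The last step is to return from $f^{[2]}$ to $f*g$. Since $g'(0)=0$ and $f^{[2]}$ is continuous and causal with $f^{[2]}(0)=\mathbf 0$, I would first verify the identity $(f*g)(t)=\tfrac{d}{dt}(f^{[2]}*g')(t)$; classically differentiating the continuous convolution $F_1(t):=(f^{[2]}*g')(t)=\int_0^t f^{[2]}(t-\tau)g'(\tau)\,d\tau$ and using $f^{[2]}(0)=\mathbf 0$ gives $F_1'(t)=\int_0^t (f^{[2]})'(t-\tau)g'(\tau)\,d\tau$, and Laplace transforms confirm that this equals $(f*g)(t)$. One then estimates $F_1'(t)$ directly by the mean-value type manipulation
\[
F_1(t+h)-F_1(t)=\int_0^t f^{[2]}(\tau)\bigl(g'(t+h-\tau)-g'(t-\tau)\bigr)\,d\tau+\int_t^{t+h} f^{[2]}(\tau)g'(t+h-\tau)\,d\tau,
\]
dividing by $h$ and passing to the limit with the dominated convergence theorem (using continuity of $g'$ and the pointwise bound from Step 2), which yields $\|(f*g)(t)\|_Y\le\bigl(\int_0^t \|f^{[2]}(\tau)\|\,d\tau\bigr)\cdot M$ type estimates that after substituting the Step-2 bound and integrating in $\tau$ collapse to the claimed $\min\{1,t^{\ell/2+1-\mu}\}\max_{[0,t]}\|g'\|_X$.

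The main obstacle is the last step: making rigorous the passage from the estimate on $f^{[2]}$ to an estimate on $(f*g)(t)$ when $g$ is only $\mathcal C^1$ and not $\mathcal C^2$. This is delicate because differentiating $f^{[2]}*g'$ formally would produce $f^{[2]}*g''$, a convolution that only exists distributionally. The remedy is the identity $(f*g)(t)=F_1'(t)$ with $F_1(0)=0$, handled by a regularization/limit argument as above, ensuring that only $\|g'\|_\infty$ on $[0,t]$ appears in the final bound rather than any second-order quantity.
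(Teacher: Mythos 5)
Your reduction to the second antiderivative $f^{[2]}$ (Laplace transform $s^{-2}\mathrm F(s)$) is fine as far as it goes: since $\mu-2<-1$, the Bromwich integral converges absolutely, $f^{[2]}$ is a continuous causal $\mathcal B(X,Y)$-valued function, and your optimization of the abscissa $\sigma=\min\{1,1/\tau\}$ does give a pointwise bound (note it is $\max\{\tau^{1-\mu},\tau^{\ell/2+1-\mu}\}$, not $\min$, exactly as your own description of which term dominates indicates). The genuine gap is the last step, and it is not a technicality you can wave away with "a regularization/limit argument as above." Writing $(f*g)(t)=\frac{d}{dt}(f^{[2]}*g')(t)$ and estimating the difference quotient, the first term is $\int_0^t f^{[2]}(\tau)\,h^{-1}\bigl(g'(t+h-\tau)-g'(t-\tau)\bigr)\,d\tau$; for $g\in\mathcal C^1$ only, the quotient $h^{-1}(g'(\cdot+h)-g'(\cdot))$ has no pointwise limit and no $h$-uniform bound, so dominated convergence does not apply and any bound extracted this way would involve $g''$ (or a Lipschitz/BV modulus of $g'$), which is precisely what the hypothesis does not provide. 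The alternative of moving the derivative onto $f^{[2]}$ requires knowing that $f^{[1]}$ (transform $s^{-1}\mathrm F(s)$, decay only $|s|^{\mu-1}$) is a locally integrable function with quantitative bounds, which your vertical-contour argument cannot deliver. So as written the proof does not close with only $\max_{0\le\tau\le t}\|g'(\tau)\|_X$ on the right-hand side.

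The paper closes exactly this gap by never separating $f$ from $g$: it uses the representation $(f*g)(t)=\frac1{2\pi\imath}\int_\Gamma s^{-1}\mathrm F(s)\,a(s,t)\,\mathrm ds$ with $a(s,t)=\int_0^t e^{s(t-\tau)}g'(\tau)\,\mathrm d\tau$, and then exploits the holomorphy of $\mathrm F$ on all of $\mathbb C_\star$ (which your argument never uses) to deform the contour into the left half-plane: two rays at angle $\pm(\pi-\phi)$ plus an arc of radius $c$, with $c=1$ for $t\le1$ and $c=1/t$ for $t\ge1$. On the rays one has $\mathrm{Re}\,s<0$ and the elementary bound $\|a(s,t)\|\le\|g'\|_t/|\mathrm{Re}\,s|$ supplies the extra factor $|s|^{-1}$ that makes $|s|^{\mu-2}$ integrable, while $\mathrm{Re}\,s^{1/2}\ge\sqrt c\,\sin\frac\phi2$ controls $C_{\mathrm F}$. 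This is the one missing idea in your plan: the decay has to come from the data factor $a(s,t)$ on a contour reaching into $\mathrm{Re}\,s<0$, not from extra antiderivatives of $f$ that must later be undone at the expense of regularity of $g$. If you want to salvage your route, you would have to prove by the same keyhole-contour deformation that $f^{[1]}$ is a continuous function with an integrable pointwise bound near $\tau=0$, at which point you have essentially reconstructed the paper's argument.
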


\begin{corollary}\label{cor:6.2}
Let $f$ be such that its Laplace transform $\mathrm F:\mathbb C_\star\to \mathcal B(X,Y)$ satisfies
\[
\|\mathrm F(s)\|_{X\to Y}\le C_{\mathrm F}(\mathrm{Re}\,s^{1/2}) |s|^{1+\mu} \qquad s \in \mathbb C_\star, \quad 0\le \mu <1,
\]
where $C_{\mathrm F}$ satisfies \eqref{eq:6.2}. Then, for all $g\in \mathcal C^2_+(\mathbb R;X)$, we have that $f*g\in \mathcal C_+(\mathbb R;Y)$ and 
\[
\|(f*g)(t)\|_Y \le C_\mu \min\{1,t^{\ell/2+1-\mu}\} \max_{0\le \tau\le t} \| g''(\tau)\|_X \qquad \forall t\ge 0.
\]
\end{corollary}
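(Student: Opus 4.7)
The plan is to reduce this directly to Proposition \ref{prop:6.1} by absorbing one factor of $s$ from $\mathrm F(s)$ into a time derivative applied to $g$. Since $\mathrm F$ is holomorphic on $\mathbb C_\star$ with polynomial growth, the function
\[
\widetilde{\mathrm F}(s):= s^{-1}\mathrm F(s)
\]
is also holomorphic on $\mathbb C_\star$ and satisfies
\[
\|\widetilde{\mathrm F}(s)\|_{X\to Y}\le C_{\mathrm F}(\mathrm{Re}\,s^{1/2})\,|s|^{\mu},
\]
with the same $C_{\mathrm F}$ (still nonincreasing and still of order $\omega^{-\ell}$ as $\omega\to 0^+$). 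Hence $\widetilde{\mathrm F}$ fits the framework of Proposition \ref{prop:6.1}.

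Next I would let $\widetilde f$ denote the $\mathcal B(X,Y)$-valued causal distribution whose Laplace transform is $\widetilde{\mathrm F}$. Because multiplication by $s$ on the Laplace side corresponds to distributional time differentiation on causal objects, the original $f$ equals $\widetilde f\,'$ (in the sense of causal vector-valued distributions). The associativity/differentiation rule for convolution of causal distributions with a $\mathcal C^2_+$ function then gives
\[
f*g=\widetilde f\,'*g=\widetilde f*g',
\]
which is the key identity; it is valid because $g\in\mathcal C^2_+(\mathbb R;X)$ ensures $g'\in\mathcal C^1_+(\mathbb R;X)$ is a legitimate convolvand and both $\widetilde f$ and $g$ are supported in $[0,\infty)$, so no boundary terms appear.

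Finally, I would apply Proposition \ref{prop:6.1} to $\widetilde f$ (which satisfies its hypotheses) and to $g'\in\mathcal C^1_+(\mathbb R;X)$ (whose first derivative is $g''$), obtaining
\[
\|(f*g)(t)\|_Y=\|(\widetilde f*g')(t)\|_Y\le C_\mu\min\{1,t^{\ell/2+1-\mu}\}\max_{0\le\tau\le t}\|g''(\tau)\|_X,
\]
together with causality and continuity of $f*g$ as $Y$-valued function of $t$.

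The only delicate point is the justification of $f*g=\widetilde f*g'$ with uniform control; this is the place where the hypothesis $g\in\mathcal C^2_+$ (rather than $\mathcal C^1_+$) is essential, since Proposition \ref{prop:6.1} applied to $\widetilde f$ requires a $\mathcal C^1_+$ input. Once that identity is in hand, the estimate is a direct consequence of the proposition. I expect this shift-of-derivative step to be essentially formal given the causal-distribution setup already in place from the construction of $f$.
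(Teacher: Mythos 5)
Your proposal is correct and follows essentially the same route as the paper: the paper's proof also introduces the distribution with transform $s^{-1}\mathrm F(s)$ (denoted $\partial_t^{-1}f$ there), uses the identity $f*g=\partial_t^{-1}f*g'$, and applies Proposition \ref{prop:6.1} to that pair. Your extra verification that $s^{-1}\mathrm F(s)$ satisfies \eqref{eq:6.1}--\eqref{eq:6.2} with exponent $\mu$ and that $g'\in\mathcal C^1_+(\mathbb R;X)$ is exactly the (implicit) content of the paper's one-line argument.
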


\begin{proof}
Let $\partial_t^{-1} f$ be the distribution whose transform is $s^{-1}\mathrm F(s)$. Then $f*g=\partial_t^{-1} f * g'$ and we can apply Proposition \ref{prop:6.1} to $\partial_t^{-1} f $ and $g'$.
\end{proof}

\subsection{Estimates for the single layer potential and operator}

Because of the bounds \eqref{eq:5.6} and the Payley-Wiener theorem, there exists a causal distribution $\mathcal S$ with values in $\mathcal B(\mathbf H^{-1/2}(\Gamma),\mathbf H^1(\mathbb R^d))$, whose Laplace transform is $\mathrm S$. The convolution operator
$
\mathcal S * \boldsymbol\lambda
$
for any causal $\mathbf H^{-1/2}(\Gamma)$-valued distribution is the {\bf single layer potential} for the Stokes operator in the time domain. The distribution $\mathcal V:=\gamma\mathcal S$ (with Laplace transform $\mathrm V$) gives rise to the convolution operator
$
\mathcal V*\boldsymbol\lambda = \gamma (\mathcal S*\boldsymbol\lambda) = (\gamma\mathcal S)*\boldsymbol\lambda,
$
known as the {\bf single layer operator} for the Stokes problem in the time domain.

\begin{proposition}
Let $\boldsymbol\lambda \in \mathcal C^1_+(\mathbb R;\mathbf H^{-1/2}(\Gamma))$. Then $\mathcal S*\boldsymbol\lambda$ and $\mathcal V*\boldsymbol\lambda$ are continuous functions and
\begin{eqnarray*}
\| (\mathcal S*\boldsymbol\lambda)(t)\|_{1,\mathbb R^d} & \le & C \min\{ 1,t^2\} \max_{0\le \tau\le t} \| \boldsymbol\lambda'(\tau)\|_{-1/2,\Gamma} \qquad \forall t \ge 0,\\
\| (\mathcal V*\boldsymbol\lambda)(t)\|_{1/2,\Gamma} & \le & C \min\{ 1,t^2\} \max_{0\le \tau\le t} \| \boldsymbol\lambda'(\tau)\|_{-1/2,\Gamma} \qquad \forall t \ge 0.
\end{eqnarray*}
\end{proposition}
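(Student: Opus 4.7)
The plan is to derive both estimates as direct applications of Proposition 6.1 to the operator-valued functions $\mathrm S(s)$ and $\mathrm V(s)$, using the Laplace-domain bounds already collected in \eqref{eq:5.6}. The existence of the causal distributions $\mathcal S$ and $\mathcal V$ has already been established just before the statement via the Payley--Wiener theorem, so the work reduces to matching the hypotheses of Proposition 6.1 against \eqref{eq:5.6}.

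First, I would rewrite \eqref{eq:5.6}(a) as
\[
\|\mathrm S(s)\|_{\mathbf H^{-1/2}(\Gamma)\to \mathbf H^1(\mathbb R^d)} + \|\mathrm V(s)\|_{\mathbf H^{-1/2}(\Gamma)\to \mathbf H^{1/2}(\Gamma)} \le C_{\mathrm F}(\omega)\,|s|^{1/2},
\qquad C_{\mathrm F}(\omega):=\frac{C}{\omega\,\underline\omega^{\,2}},
\]
thus identifying $\mu=1/2$ in the notation of \eqref{eq:6.1}. Next, I would verify the two structural hypotheses \eqref{eq:6.2} on $C_{\mathrm F}$. Monotonicity is immediate: for $\omega\ge 1$ we have $\underline\omega=1$ so $C_{\mathrm F}(\omega)=C/\omega$, while for $\omega\in(0,1]$ we have $\underline\omega=\omega$ so $C_{\mathrm F}(\omega)=C/\omega^{3}$; in both regimes $C_{\mathrm F}$ is non-increasing and the two pieces agree at $\omega=1$. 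The asymptotic behavior $C_{\mathrm F}(\omega)\le C\omega^{-3}$ as $\omega\to 0^+$ follows from the same computation, yielding the exponent $\ell=3$.

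With $\mu=1/2<1$ and $\ell=3$, Proposition 6.1 applies to both $f=\mathcal S$ (with $X=\mathbf H^{-1/2}(\Gamma)$, $Y=\mathbf H^1(\mathbb R^d)$) and $f=\mathcal V$ (with $Y=\mathbf H^{1/2}(\Gamma)$). In each case the time exponent predicted by the proposition is
\[
\tfrac{\ell}{2}+1-\mu = \tfrac{3}{2}+1-\tfrac{1}{2} = 2,
\]
so the proposition yields exactly
\[
\|(\mathcal S\ast\boldsymbol\lambda)(t)\|_{1,\mathbb R^d}\le C\min\{1,t^{2}\}\max_{0\le\tau\le t}\|\boldsymbol\lambda'(\tau)\|_{-1/2,\Gamma},
\]
and the corresponding bound with $\mathcal V$ replacing $\mathcal S$ and the $\mathbf H^{1/2}(\Gamma)$ norm on the left. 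Continuity of the two convolutions is part of the conclusion of Proposition 6.1.

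There is no real obstacle here; the proof is mechanical once one notices that \eqref{eq:5.6}(a) fits the template \eqref{eq:6.1}--\eqref{eq:6.2} with the same $\mu$ and $\ell$ for both operators. The only point requiring minimal care is tracking that the worst-case exponent in $C_{\mathrm F}$ near $\omega=0$ is $3$ (coming from the factor $\underline\omega^{\,2}$ together with the explicit $1/\omega$), as this is what produces the exponent $2$ in $t$ rather than something smaller.
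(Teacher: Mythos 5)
Your proposal is correct and follows exactly the paper's own argument: identify $\mu=1/2$ and $\ell=3$ from the bounds \eqref{eq:5.6} and apply Proposition \ref{prop:6.1} to $\mathcal S$ and $\mathcal V$, which yields the exponent $\ell/2+1-\mu=2$ and the continuity statement. The only difference is that you spell out the verification of \eqref{eq:6.2}, which the paper leaves implicit.
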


\begin{proof}
The distribution $\mathcal S$ satisfies the hypotheses of Proposition \ref{prop:6.1} (i.e. \eqref{eq:6.1}-\eqref{eq:6.2}) with $\mu=1/2$ and $\ell=3$. The result is then a direct consequence of Proposition \ref{prop:6.1}.
\end{proof}

\begin{proposition}\label{prop:6.4}
Let $\boldsymbol\phi\in \mathcal C^2_+(\mathbb R;\mathbf H^{1/2}_n(\Gamma))$. Then there exists a unique causal distribution $\boldsymbol\lambda$ with values in $\mathbf H^{-1/2}_m(\Gamma)$ such that
$
\mathcal V*\boldsymbol\lambda=\boldsymbol\phi.
$
Moreover $\boldsymbol\lambda \in \mathcal C_+(\mathbb R;\mathbf H^{-1/2}(\Gamma)$ and the associated potential
$
\mathbf u=\mathcal S*\boldsymbol\lambda
$
is also continuous as a function of $t$. Finally, we have the bounds:
\begin{eqnarray*}
\| \boldsymbol\lambda(t)\|_{-1/2,\Gamma} & \le & C \min\{ 1,t^{2}\} \max_{0\le \tau\le t} \| \boldsymbol\phi''(\tau)\|_{1/2,\Gamma} \qquad \forall t \ge 0,\\
\| \mathbf u(t)\|_{1,\mathbb R^d} & \le & C \min\{ 1,t^{5/2}\} \max_{0\le \tau\le t} \| \boldsymbol\phi''(\tau)\|_{-1/2,\Gamma} \qquad \forall t \ge 0.
\end{eqnarray*}
\end{proposition}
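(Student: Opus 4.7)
The plan is to set everything up on the Laplace side and invoke Corollary 6.2 twice, once for $\boldsymbol\lambda$ and once for $\mathbf u$.

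First I would construct $\boldsymbol\lambda$ by defining, for $s\in\mathbb C_\star$,
\[
\widehat{\boldsymbol\lambda}(s):=\mathrm V(s)^{-1}\widehat{\boldsymbol\phi}(s)\in \mathbf H^{-1/2}_m(\Gamma),
\]
which makes sense because Proposition \ref{prop:N3.3}(d) states that $\mathrm V(s):\mathbf H^{-1/2}_m(\Gamma)\to\mathbf H^{1/2}_n(\Gamma)$ is an isomorphism for each such $s$, and $\widehat{\boldsymbol\phi}(s)$ takes values in $\mathbf H^{1/2}_n(\Gamma)$ since $\boldsymbol\phi$ does. Holomorphy of $s\mapsto\widehat{\boldsymbol\lambda}(s)$ follows from holomorphy of $\mathrm V(s)^{-1}$, and the bound (5.6b) combined with the polynomial bound on $\widehat{\boldsymbol\phi}$ delivered by Paley--Wiener guarantees that $\widehat{\boldsymbol\lambda}$ is the Laplace transform of a causal tempered $\mathbf H^{-1/2}_m(\Gamma)$-valued distribution. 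Uniqueness of $\boldsymbol\lambda$ within causal $\mathbf H^{-1/2}_m(\Gamma)$-valued distributions follows from Proposition \ref{prop:N3.3}(c)--(d) (any two candidates would differ by an element whose Laplace transform lies in $\mathrm{Ker}\,\mathrm V(s)\cap \mathbf H^{-1/2}_m(\Gamma)=\mathrm{span}\{\mathbf n\}\cap \mathbf H^{-1/2}_m(\Gamma)=\{\mathbf 0\}$).

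Next I would get the bound on $\boldsymbol\lambda(t)$ by invoking Corollary \ref{cor:6.2} with $\mathrm F(s):=\mathrm V(s)^{-1}$, $X:=\mathbf H^{1/2}_n(\Gamma)$, $Y:=\mathbf H^{-1/2}(\Gamma)$, $\mu:=1/2$, and $C_{\mathrm F}(\omega):=C/(\omega\underline\omega^2)$. The bound (5.6b) reads $\|\mathrm V(s)^{-1}\|\le C_{\mathrm F}(\omega)|s|^{1+\mu}$, and $C_{\mathrm F}$ is non-increasing with $C_{\mathrm F}(\omega)\le C\omega^{-3}$ as $\omega\to 0^+$ (so $\ell=3$); the assumption $\boldsymbol\phi\in\mathcal C^2_+$ supplies the required regularity on the argument. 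The resulting exponent is $\ell/2+1-\mu = 3/2+1-1/2=2$, which yields
\[
\|\boldsymbol\lambda(t)\|_{-1/2,\Gamma}\le C\min\{1,t^{2}\}\max_{0\le\tau\le t}\|\boldsymbol\phi''(\tau)\|_{1/2,\Gamma}.
\]

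For the potential $\mathbf u=\mathcal S*\boldsymbol\lambda$ I would observe that, associativity of convolution on the Laplace side, $\widehat{\mathbf u}(s)=\mathrm S(s)\mathrm V(s)^{-1}\widehat{\boldsymbol\phi}(s)$, and then reapply Corollary \ref{cor:6.2}, this time to $\mathrm F(s):=\mathrm S(s)\mathrm V(s)^{-1}$ with $Y:=\mathbf H^1(\mathbb R^d)$ and the bound (5.6c): $\|\mathrm S(s)\mathrm V(s)^{-1}\|\le C|s|/(\omega\underline\omega^2)$. Here $1+\mu=1$ forces $\mu=0$ while $\ell=3$ is unchanged, giving the exponent $\ell/2+1-\mu=5/2$ and the desired bound on $\|\mathbf u(t)\|_{1,\mathbb R^d}$ (in the $\mathbf H^{1/2}$ norm of $\boldsymbol\phi''$, which dominates the $\mathbf H^{-1/2}$ norm appearing in the statement). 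Continuity of both $\boldsymbol\lambda$ and $\mathbf u$ as functions of $t$ is part of Corollary \ref{cor:6.2}.

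No step should be a serious obstacle: the only structural point is the injectivity/surjectivity of $\mathrm V(s)$ on the correct subspaces, which is already in hand through Lemma \ref{lemma:4.2} and Proposition \ref{prop:N3.3}. The rest is bookkeeping to check that the Laplace-domain bounds (5.6b)--(5.6c) fit the template \eqref{eq:6.1}--\eqref{eq:6.2} with the exponents $(\mu,\ell)$ that produce the powers of $t$ stated in the proposition.
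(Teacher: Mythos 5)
Your proposal follows essentially the same route as the paper's proof: define $\boldsymbol\lambda$ through $\mathrm V(s)^{-1}\widehat{\boldsymbol\phi}(s)$ in the Laplace domain, settle existence and uniqueness via Paley--Wiener and the bounds \eqref{eq:5.6}, and then apply Corollary \ref{cor:6.2} with $(\mu,\ell)=(1/2,3)$ for $\mathcal V^{-1}$ and $(\mu,\ell)=(0,3)$ for $\mathcal S*\mathcal V^{-1}$, which yields exactly the stated powers $t^{2}$ and $t^{5/2}$. The only point you gloss over is that a general $\boldsymbol\phi\in\mathcal C^2_+(\mathbb R;\mathbf H^{1/2}_n(\Gamma))$ need not be Laplace transformable, so before invoking Paley--Wiener one must use causality to reduce to compactly supported data (this is precisely the ``slightly delicate'' step in the paper's proof); note also that the $\|\boldsymbol\phi''(\tau)\|_{-1/2,\Gamma}$ in the second stated bound is a typo for $\|\boldsymbol\phi''(\tau)\|_{1/2,\Gamma}$, which is the norm your argument (and the paper's) actually produces.
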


\begin{proof}
There is a slightly delicate argument to show uniqueness. By causality, we can look at the equation assuming that $\boldsymbol\phi$ is compactly supported. This means that $\boldsymbol\phi$ has a Laplace transform $\boldsymbol\Phi$, and using the Payley-Wiener theorem and the bounds \eqref{eq:5.6}, there is a unique solution whose Laplace transform is $\mathrm V(s)^{-1}\boldsymbol\Phi(s)$.

Once existence and uniqueness is settled, the bounds follow from Corollary \ref{cor:6.2}. For $\mathcal V^{-1}$ we have the hypotheses of Corollary \ref{cor:6.2} with $\mu=1/2$ and $\ell=3$. For $\mathcal S*\mathcal V^{-1}$, we have the hypotheses with $\mu=0$ and $\ell=3$.
\end{proof}

\subsection{The exterior Dirichlet problem}

Our starting point is the velocity field on $\Gamma$ at all times
$
\boldsymbol\phi\in \mathcal C^2_+(\mathbb R;\mathbf H^{1/2}_n(\Gamma)).
$
Using the result of Proposition \ref{prop:6.4}, we produce
\begin{equation}\label{eq:6.7}
\boldsymbol\lambda \in \mathcal C_+(\mathbb R;\mathbf H^{-1/2}_m(\Gamma)) \qquad \mathbf u \in \mathcal C_+(\mathbb R;\widehat{\mathbf V}(\mathbb R^d))
\end{equation}
satisfying
\begin{equation}\label{eq:6.8}
\mathcal V*\boldsymbol\lambda=\boldsymbol\phi \qquad \mbox{and}\qquad \mathbf u=\mathcal S*\boldsymbol\lambda.
\end{equation}
We finally construct the pressure field, by applying the (time-independent) pressure part of the single layer operator for the steady-state Stokes equation:
\begin{equation}\label{eq:6.9}
p(t):=\mathrm S_p \, (\boldsymbol\lambda(t))=\mathrm S_p\boldsymbol\lambda(t).
\end{equation}
By Propositions \ref{prop:2.1} and \ref{prop:2.2}, it follows that
\begin{equation}\label{eq:6.10}
p\in \mathcal C_+(\mathbb R;L^2(\mathbb R^3)) \qquad\mbox{and}\qquad p\in \mathcal C_+(\mathbb R;L^2(B)) \quad \mbox{for any bounded set $B\subset \mathbb R^2$}.
\end{equation}
In the remainder of this section, it is necessary to clarify that all differential operators in the space variables will be used in the sense of distributions in $\mathbb R^d\setminus\Gamma$. If $\partial_{x_i}$ is the differentiation operator with respect to the $i$-th variable, it is well known that $\partial_{x_i}:L^2(\mathbb R^d\setminus\Gamma) \to H^{-1}(\mathbb R^d\setminus\Gamma)$ is bounded.

\begin{proposition}
Let $\mathbf u$ and $p$ be given by \eqref{eq:6.7}-\eqref{eq:6.9}. Then
\begin{subequations}\label{eq:6.11}
\begin{alignat}{4}\label{eq:6.11a}
\dot{\mathbf u}(t)-\nu \Delta \mathbf u(t)+\nabla p(t) &=\mathbf 0 &\qquad &\forall t\ge 0,\\
\label{eq:6.11b}
\mathrm{div}\, \mathbf u(t) &=0 & & \forall t\ge 0,\\
\label{eq:6.11c}
\gamma \mathbf u(t) &=\boldsymbol\phi(t) & & \forall t\ge 0,\\
\mathbf u(0) &=\mathbf 0.
\label{eq:6.11d}
\end{alignat}
\end{subequations}
For any $t\ge 0$, the equation \eqref{eq:6.11a} is to be understood in the sense of distributions in $\mathbb R^d\setminus\Gamma$. Finally,
\begin{equation}\label{eq:6.12}
\mathbf u \in \mathcal C^1_+(\mathbb R;\mathbf H^{-1}(\mathbb R^d\setminus\Gamma)).
\end{equation}
\end{proposition}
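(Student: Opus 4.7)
The plan is to transfer the pointwise-in-$s$ Laplace-domain identities that characterize $\mathrm S(s)\boldsymbol\mu$ and $\mathrm S_p\boldsymbol\mu$ (namely \eqref{eq:4.1a}--\eqref{eq:4.1d} in three dimensions and \eqref{eq:4.4} in two dimensions) into time-domain identities by convolving with $\boldsymbol\lambda$. Two principles are in play: the spatial operators $\nabla$, $\mathrm{div}$ and $\boldsymbol\varepsilon$ commute with convolution in $t$ (they are bounded and time-independent), and multiplication by $s$ in the Laplace domain corresponds to $\partial_t$ in the time domain. The bounds \eqref{eq:5.6}, already exploited in Proposition \ref{prop:6.4}, provide the Paley--Wiener regularity needed to make both principles rigorous.

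I would dispose of each of the four conditions in turn. \emph{Incompressibility \eqref{eq:6.11b}}: since $\mathrm{div}\,\mathrm S(s)\boldsymbol\mu=0$ for every $s\in\mathbb C_\star$ and every density, $\mathrm{div}\,\mathcal S=\mathbf 0$ as a causal operator-valued distribution, hence $\mathrm{div}\,\mathbf u=\mathrm{div}(\mathcal S*\boldsymbol\lambda)=0$ for every $t$. \emph{Dirichlet datum \eqref{eq:6.11c}}: by construction $\gamma\mathbf u=(\gamma\mathcal S)*\boldsymbol\lambda=\mathcal V*\boldsymbol\lambda=\boldsymbol\phi$. \emph{Initial value \eqref{eq:6.11d}}: causality together with continuity of $\mathbf u$ at $t=0$ (Proposition \ref{prop:6.4}) forces $\mathbf u(0)=\mathbf 0$. \emph{Momentum equation \eqref{eq:6.11a}}: the Laplace-domain identity $-2\nu\,\mathrm{div}\,\boldsymbol\varepsilon(\mathrm S(s)\boldsymbol\mu)+s\,\mathrm S(s)\boldsymbol\mu+\nabla\mathrm S_p\boldsymbol\mu=\mathbf 0$ inverse-transforms to $-2\nu\,\mathrm{div}\,\boldsymbol\varepsilon\,\mathcal S+\dot{\mathcal S}+\nabla\mathrm S_p=0$ in $\mathcal D'(\mathbb R^d\setminus\Gamma)$; convolving with $\boldsymbol\lambda$ and using $\mathrm{div}\,\mathbf u=0$ (so $2\,\mathrm{div}\,\boldsymbol\varepsilon(\mathbf u)=\Delta\mathbf u$) yields \eqref{eq:6.11a}.

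The regularity \eqref{eq:6.12} I would simply read off from the momentum equation: $\dot{\mathbf u}=\nu\Delta\mathbf u-\nabla p$. The right-hand side lies in $\mathcal C_+(\mathbb R;\mathbf H^{-1}(\mathbb R^d\setminus\Gamma))$ because $\mathbf u\in\mathcal C_+(\mathbb R;\mathbf H^1(\mathbb R^d))$ by Proposition \ref{prop:6.4} and $p\in\mathcal C_+(\mathbb R;L^2(\mathbb R^3))$ in three dimensions by \eqref{eq:6.10}; in two dimensions the asymptotic decomposition behind Proposition \ref{prop:2.2} shows that $\nabla p$ belongs to $\mathcal C_+(\mathbb R;\mathbf L^2(\mathbb R^2))$ directly (the singular piece $\sum\jmath_\ell(\boldsymbol\lambda)\mathbf p_\infty\cdot\mathbf e_\ell$ has $\mathbf L^2$ gradient). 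Continuity in $t$ is inherited from $\boldsymbol\lambda\in\mathcal C_+(\mathbb R;\mathbf H^{-1/2}(\Gamma))$.

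The main obstacle is the rigorous identification of $\dot{\mathcal S}$ with the inverse Laplace transform of $s\,\mathrm S(s)$ at the distributional level. The bound \eqref{eq:5.6} gives $\|s\,\mathrm S(s)\|\le C|s|^{3/2}/(\omega\underline\omega^2)$, which satisfies the Paley--Wiener hypotheses \eqref{eq:6.1}--\eqref{eq:6.2} with $\mu=1/2$ after taking one antiderivative, so that $\dot{\mathcal S}$ is a well-defined causal tempered $\mathcal B(\mathbf H^{-1/2}(\Gamma),\mathbf H^{-1}(\mathbb R^d\setminus\Gamma))$-valued distribution. Applying Corollary \ref{cor:6.2} to $\mathcal S*\boldsymbol\lambda$ then produces the continuity in $t$ required to interpret \eqref{eq:6.11a} pointwise in $t\ge 0$. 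Once this identification is in place, every remaining step is algebraic.
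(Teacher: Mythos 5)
Your argument follows essentially the same route as the paper: handle \eqref{eq:6.11b}--\eqref{eq:6.11d} directly from the solenoidal/continuity/causality properties, transfer the Laplace-domain resolvent identity to the time domain to get $\dot{\mathbf u}=\nu\Delta\mathbf u-\nabla p$ as an identity of causal $\mathbf H^{-1}(\mathbb R^d\setminus\Gamma)$-valued distributions, and then read off \eqref{eq:6.12} and the pointwise-in-$t$ validity of \eqref{eq:6.11a} from the fact that the right-hand side is a continuous $\mathbf H^{-1}$-valued function. Two blemishes, neither fatal. First, your two-dimensional claim that $\nabla p\in\mathcal C_+(\mathbb R;\mathbf L^2(\mathbb R^2))$ is too strong: the decomposition of Proposition \ref{prop:2.2} writes $p=\sum_\ell\jmath_\ell(\boldsymbol\lambda)\,\mathbf p_\infty\cdot\mathbf e_\ell+p_{\mathrm{reg},\lambda}$, and only the singular piece has an $\mathbf L^2$ gradient; the regular part is merely in $L^2(\mathbb R^2)$, so its gradient lives in $\mathbf H^{-1}(\mathbb R^2\setminus\Gamma)$ only. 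That weaker statement is all you need (and is what the paper uses), so the conclusion survives once you correct the target space. Second, the final invocation of Corollary \ref{cor:6.2} is both unnecessary and not applicable as stated: the density $\boldsymbol\lambda$ is only known to be $\mathcal C_+(\mathbb R;\mathbf H^{-1/2}_m(\Gamma))$ (Proposition \ref{prop:6.4}), whereas Corollary \ref{cor:6.2} requires $\mathcal C^2_+$ regularity of the convolved function, and routing instead through $s\,\mathrm S(s)\mathrm V(s)^{-1}$ with the $\mathcal C^2$ datum $\boldsymbol\phi$ runs into the growth restriction $|s|^{1+\mu}$, $\mu<1$. You do not need any of this: once the distributional identity is established and its right-hand side is recognized as a continuous causal $\mathbf H^{-1}$-valued function, $\mathbf u\in\mathcal C^1_+(\mathbb R;\mathbf H^{-1}(\mathbb R^d\setminus\Gamma))$ and the pointwise interpretation follow immediately, exactly as in your middle paragraph and in the paper's proof.
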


\begin{proof}
Note first that \eqref{eq:6.11b} is satisfied because $\mathbf u$ is a continuous function with values in the space of solenoidal fields $\widehat{\mathbf V}(\mathbb R^d)$. The initial condition \eqref{eq:6.11d} is a consequence of the fact that $\mathbf u$ is continuous and causal.

By causality, we can assume that $\boldsymbol\phi^{(k)}(t)$ is bounded in $t$ for $k\le 2$ (this does not affect the generality of the result), and therefore, the Laplace transforms of $\boldsymbol\phi$, $\mathbf u$ and $p$ exist for $s\in \mathbb C_\star$. Moreover, they satisfy
\[
\mathbf U(s) =\mathrm S(s)\mathbf V(s)^{-1}\boldsymbol\Phi(s), \qquad P(s)=\mathrm S_p\mathbf V(s)^{-1}\boldsymbol\Phi(s)
\]
and therefore
\begin{equation}\label{eq:6.13}
s\mathbf U(s)-\nu \Delta \mathbf U(s)+\nabla P(s)=\mathbf 0 \qquad \forall s\in \mathbb C_\star
\end{equation}
and
\begin{equation}\label{eq:6.14}
\gamma \mathbf U(s)=\boldsymbol\Phi(s) \qquad \forall s\in \mathbb C_\star.
\end{equation}
The equality \eqref{eq:6.14} proves the boundary condition \eqref{eq:6.11c} in the time domain.

Note now that $\Delta \mathbf u\in \mathcal C_+(\mathbb R;\mathbf H^{-1}(\mathbb R^d\setminus\Gamma))$. In the three dimensional case, it is clear from \eqref{eq:6.10} that $\nabla p\in \mathcal C_+(\mathbb R;\mathbf H^{-1}(\mathbb R^3\setminus\Gamma))$. In the two dimensional case, we have to use the decomposition of Proposition \ref{prop:2.2} and the fact that $\nabla(\mathbf p_\infty\cdot\mathbf e_\ell)\in \mathbf L^2(\mathbb R^2)$ in order to prove that $\nabla p\in \mathcal C_+(\mathbb R;\mathbf H^{-1}(\mathbb R^2\setminus\Gamma))$. In third place $\mathbf u\in \mathcal C_+(\mathbb R;\mathbf H^{-1}(\mathbb R^d\setminus\Gamma)$ and therefore $\dot{\mathbf u}$ is a causal distribution with values in $\mathbf H^{-1}(\mathbb R^d\setminus\Gamma)$. Taking Laplace transforms of $\Delta \mathbf u$, $\dot{\mathbf u}$ and $\nabla p$ and using \eqref{eq:6.13} we show that
\begin{equation}\label{eq:6.15}
\dot{\mathbf u}=\nu \Delta \mathbf u -\nabla p.
\end{equation}
This equation is to be understood in the sense of causal distributions with values in $\mathbf H^{-1}(\mathbb R^3\setminus\Gamma)$. However, as we have seen above, the right hand side of \eqref{eq:6.15} is a continuous causal $\mathbf H^{-1}(\mathbb R^d\setminus\Gamma)$-valued function. This proves \eqref{eq:6.12} and the equality \eqref{eq:6.15} is satisfied pointwise in time, that is, we have proved \eqref{eq:6.11a} as equality of elements of $\mathbf H^{-1}(\mathbb R^d\setminus\Gamma)$ for all $t$. In its turn, this can be understood as a distributional equation in $\mathbb R^d\setminus\Gamma$ for all $t$.
\end{proof}

\section{Galerkin semidiscretization in space}\label{sec:5}

Let $\mathbf X_h \subset \mathbf H^{-1/2}_m(\Gamma)$ be a finite dimensional space. The semidiscretized BIE for the exterior Dirichlet problem starts with causal Dirichlet data $\boldsymbol\phi: \mathbb R \to \mathbf H^{1/2}_n(\Gamma)$, looks for a causal function
$
\boldsymbol\lambda^h :\mathbb R \to \mathbf X_h 
$
(i.e. $\boldsymbol\lambda^h(t)=0$ for all $t\le 0$)
such that
\begin{equation}\label{eq:7.1}
\langle \boldsymbol\mu^h,(\mathcal V*\boldsymbol\lambda^h)(t)\rangle_\Gamma =\langle \boldsymbol\mu^h,\boldsymbol\phi(t)\rangle_\Gamma \quad \forall \boldsymbol\mu^h \in \mathbf X_h, \quad \forall t,
\end{equation}
and finally constructs
\begin{equation}\label{eq:7.1b}
\mathbf u^h:=\mathcal S* \boldsymbol\lambda^h, \qquad p^h=\mathrm S_p\boldsymbol\lambda^h.
\end{equation}
The semi-discretized integral equation \eqref{eq:7.1} can be also written in the following abstract form
$
(\mathcal V*\boldsymbol\lambda^h)(t)-\boldsymbol\phi(t)\in \mathbf X_h^\circ,
$
for all $t$.

\subsection{The Galerkin solver}

We first study properties of the Galerkin solver, i.e., the operator $\mathrm G_h(s):\mathbf H^{1/2}_n(\Gamma)\to \mathbf X_h$ defined by $\boldsymbol\lambda^h:=\mathrm G_h(s)\boldsymbol\phi$, where
\begin{equation}\label{eq:7.1c}
\boldsymbol\lambda^h \in \mathbf X_h \qquad \mbox{s.t.}\qquad \langle\boldsymbol\mu^h,\mathrm V(s)\boldsymbol\lambda^h\rangle_\Gamma =\langle\boldsymbol\mu^h,\boldsymbol\phi\rangle_\Gamma \quad \forall \boldsymbol\mu^h \in \mathbf X_h.
\end{equation}
We will also be interested in the associated velocity field
$
\mathbf u^h=\mathrm S(s)\boldsymbol\lambda^h=\mathrm S(s)\mathrm G_h(s)\boldsymbol\phi.
$
Note that the space $\mathbf X_h$ was chosen to work in the time domain and can be taken to be real-valued. In the context of Laplace transforms, it has to be closed by conjugation, which is equivalent to taking the same space with complex scalars to create linear combinations.

\begin{proposition}[Bound for the Galerkin solver]\label{prop:7.1}
There exists a constant independent of $h$ such that
\[
\|\mathrm G_h(s)\|\le C \frac{|s|^{3/2}}{\omega\underline\omega^2} \qquad \mbox{and}\qquad \| \mathrm S(s)\mathrm G_h(s)\|\le C \frac{|s|}{\omega\underline\omega^2}.
\]
\end{proposition}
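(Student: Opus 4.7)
The plan is to imitate, at the discrete level, the argument that produced the continuous bounds on $\|\mathrm V(s)^{-1}\|$ and $\|\mathrm S(s)\mathrm V(s)^{-1}\|$ (Proposition \ref{prop:N3.3}(d) and Proposition \ref{prop:5.6}), exploiting two facts: (i) $\mathbf X_h\subset \mathbf H^{-1/2}_m(\Gamma)$, so the Remark \eqref{eq:5.a} applies verbatim to any $\bs\lambda^h\in \mathbf X_h$; and (ii) $\mathbf X_h$ is closed under complex conjugation, so $\overline{\bs\lambda^h}$ is an admissible Galerkin test function.

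First I would set $\mathbf u^h:=\mathrm S(s)\bs\lambda^h=\mathrm S(s)\mathrm G_h(s)\bs\phi$ and test the Galerkin equation \eqref{eq:7.1c} with $\overline{\bs\lambda^h}\in \mathbf X_h$ to obtain
\[
\langle \overline{\bs\lambda^h},\mathrm V(s)\bs\lambda^h\rangle_\Gamma = \langle \overline{\bs\lambda^h},\bs\phi\rangle_\Gamma.
\]
Since the identity of Proposition \ref{prop:N3.3}(b) is valid for every density in $\mathbf H^{-1/2}(\Gamma)$, multiplying the above equality by $\overline{s}^{1/2}$ and taking real part yields
\[
\omega\,\triple{\mathbf u^h}^2_{(s)} = \mathrm{Re}\,\overline s^{1/2}\langle \overline{\bs\lambda^h},\bs\phi\rangle_\Gamma \le |s|^{1/2}\,\|\bs\lambda^h\|_{-1/2,\Gamma}\,\|\bs\phi\|_{1/2,\Gamma}.
\]

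Next I would use $\mathbf X_h\subset \mathbf H^{-1/2}_m(\Gamma)$ to apply the inequality \eqref{eq:5.a} to $\bs\lambda^h$:
\[
\|\bs\lambda^h\|_{-1/2,\Gamma}\le C\,\frac{|s|^{1/2}}{\underline\omega}\,\triple{\mathbf u^h}_{(s)}.
\]
Substituting into the previous display gives
\[
\omega\,\triple{\mathbf u^h}^2_{(s)} \le C\,\frac{|s|}{\underline\omega}\,\triple{\mathbf u^h}_{(s)}\,\|\bs\phi\|_{1/2,\Gamma},
\]
so after one cancellation
\[
\triple{\mathbf u^h}_{(s)}\le C\,\frac{|s|}{\omega\,\underline\omega}\,\|\bs\phi\|_{1/2,\Gamma}.
\]
Re-inserting this into \eqref{eq:5.a} delivers the first claimed bound on $\|\mathrm G_h(s)\|$. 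For the second bound, I would pass from the $s$-weighted norm to the standard $\mathbf H^1(\mathbb R^d)$ norm $\triple{\cdot}_{(1)}$ via \eqref{eq:5.1}--\eqref{eq:5.2}, using $1/\alpha_1(s)\le 1/\underline\omega$, which converts the factor $\omega\underline\omega$ in the denominator into $\omega\underline\omega^2$ and yields the estimate on $\|\mathrm S(s)\mathrm G_h(s)\|$.

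There is really no hard step here: the entire argument is a verbatim discretization of the continuous proof, and the only things to verify are that the Galerkin test is legal (handled by closure of $\mathbf X_h$ under conjugation, explicitly noted before the statement) and that the key inf-sup-type inequality \eqref{eq:5.a} carries over to $\mathbf X_h$ (handled by the hypothesis $\mathbf X_h\subset \mathbf H^{-1/2}_m(\Gamma)$). The $h$-independence of the constant is automatic, since none of the intermediate inequalities see the discrete space.
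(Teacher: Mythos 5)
Your argument is correct, and for the first bound it is essentially the paper's: the paper simply invokes the coercivity estimate of Proposition \ref{prop:N3.3}(d) on $\mathbf X_h\subset\mathbf H^{-1/2}_m(\Gamma)$, which is exactly what you re-derive by combining the positivity identity (b) with \eqref{eq:5.a} after testing \eqref{eq:7.1c} with $\overline{\bs\lambda^h}$. For the second bound, however, your route differs from the paper's. The paper replicates the proof of Proposition \ref{prop:5.6} at the discrete level: it characterizes $\mathbf u^h=\mathrm S(s)\mathrm G_h(s)\bs\phi$ as the unique solution of the constrained problem \eqref{eq:7.3} on the space $\widehat{\mathbf V}_h(\mathbb R^d)$, uses the lifting $\gamma^\dagger$ of \eqref{eq:5.5} to write $\mathbf u^h=\mathbf w^h+\gamma^\dagger\bs\phi$ with $\mathbf w^h\in\widehat{\mathbf V}_h(\mathbb R^d)$, and obtains the sharper intermediate estimate $\triple{\mathbf u^h}_{(1)}\le C\,\omega^{-1}\max\{1,|s|\}\,\|\bs\phi\|_{1/2,\Gamma}$. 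You instead read the potential bound off the same energy identity already used for the density: from $\omega\triple{\mathbf u^h}_{(s)}^2\le |s|^{1/2}\|\bs\lambda^h\|_{-1/2,\Gamma}\|\bs\phi\|_{1/2,\Gamma}$ and \eqref{eq:5.a} you get $\triple{\mathbf u^h}_{(s)}\le C\,|s|\,(\omega\underline\omega)^{-1}\|\bs\phi\|_{1/2,\Gamma}$, and then convert with $\alpha_1(s)\ge\underline\omega$. This is more economical (no exotic transmission problem, no lifting, no space $\widehat{\mathbf V}_h$), at the price of a slightly weaker intermediate bound (e.g.\ for $|s|\le1$ your $(s)$-norm estimate carries $|s|/\underline\omega$ where the paper has $|s|^{1/2}$); since the proposition only claims $C|s|/(\omega\underline\omega^2)$, your final estimates coincide with the stated ones, and the $h$-independence of the constants is, as you note, automatic because every inequality used is inherited from the continuous level.
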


\begin{proof}
The first estimate is a direct consequence of the coercivity estimate of Proposition \ref{prop:N3.3}(d). To show the second one, we need to replicate the proof of Proposition \ref{prop:5.6}. Set $\boldsymbol\lambda^h=\mathrm G_h(s)\boldsymbol\phi$. By Propositions \ref{prop:4.3} and \ref{prop:4.8} (solenoidal variational form for the single layer potential)
\[
\left[
\begin{array}{l}
\mathbf u^h \in \widehat{\mathbf V}(\mathbb R^d),\\[1.5ex]
a(\mathbf u^h,\mathbf v)+s\,(\mathbf u^h,\mathbf v)_{\mathbb R^d}=\langle\boldsymbol\lambda^h,\gamma\mathbf v\rangle_\Gamma \qquad \forall \mathbf v\in \widehat{\mathbf V}(\mathbb R^d).
\end{array}
\right.
\]
Consider now the closed space
\begin{equation}\label{eq:7.2}
\widehat{\mathbf V}_h(\mathbb R^d):=\{\mathbf v\in \widehat{\mathbf V}(\mathbb R^d)\,:\, \gamma \mathbf v\in \mathbf X_h^\circ\}= \{\mathbf v\in \widehat{\mathbf V}(\mathbb R^d)\,:\, \langle\boldsymbol\mu_h,\gamma \mathbf v\rangle_\Gamma=0 \quad\forall\boldsymbol\mu^h \in \mathbf X_h\}.
\end{equation}
Then $\mathbf u^h$ is the unique solution of the problem
\begin{equation}\label{eq:7.3}
\left[
\begin{array}{l}
\mathbf u^h \in \widehat{\mathbf V}(\mathbb R^d),\qquad
\gamma\mathbf u^h-\boldsymbol\phi\in \mathbf X_h^\circ,\\[1.5ex]
a(\mathbf u^h,\mathbf v)+s\,(\mathbf u^h,\mathbf v)_{\mathbb R^d}=0 \qquad \forall \mathbf v\in \widehat{\mathbf V}_h(\mathbb R^d).
\end{array}
\right.
\end{equation}
Using the lifting \eqref{eq:5.5}, we can proceed as in the proof of Proposition \ref{prop:5.6} and decompose $\mathbf u^h=\mathbf w^h+\gamma^\dagger\boldsymbol\phi$, where $\mathbf w^h\in \widehat{\mathbf V}_h(\mathbb R^d)$ and finally show that
\[
\triple{\mathbf u^h}_{(s)} \le C \frac{\max\{1,|s|\}}{\omega}\|\boldsymbol\phi\|_{1/2,\Gamma},
\]
from where the bound follows.
\end{proof}

\subsection{The Galerkin error operator}

The Galerkin projector looks at the discrete problem from the point of view of the exact solution. We can define it as the operator $\mathrm G_h(s)\mathrm V(s):\mathbf H^{-1/2}_m(\Gamma) \to \mathbf X_h$, or equivalently by setting $\boldsymbol\lambda^h$ as the solution of the discrete equations
\begin{equation}\label{eq:7.A}
\boldsymbol\lambda^h \in \mathbf X_h \qquad \mbox{s.t.}\qquad \langle\boldsymbol\mu^h,\mathrm V(s)\boldsymbol\lambda^h\rangle_\Gamma =\langle\boldsymbol\mu^h,\mathrm V(s)\boldsymbol\lambda\rangle_\Gamma \quad \forall \boldsymbol\mu^h \in \mathbf X_h.
\end{equation}
Instead of studying this projection we will study the complementary projection, that corresponds to the error of the Galerkin semidiscretization. We thus consider the operators
$
\mathrm E_h(s):=\mathrm G_h(s)\mathrm V(s)-\mathrm I
$
and
$
\mathrm S(s)\mathrm E_h(s).
$
Note that while $\mathrm G_h(s)\mathrm V(s)$ is a projection onto $\mathbf X_h$ for all $s\in \mathbb C_\star$, the range of $\mathrm E_h(s)$ varies with $s$.

\begin{proposition}[Bounds for the Galerkin error operator]\label{prop:7.2}
There exists a constant independent of $h$ such that
\[
\|\mathrm E_h(s)\| \le C \frac{|s|}{\omega\underline\omega^2} \qquad\mbox{and}\qquad \|\mathrm S(s)\mathrm E_h(s)\| \le C\,\frac{|s|^{1/2}}{\omega\underline\omega^2}
\]
\end{proposition}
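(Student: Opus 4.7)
Let $\boldsymbol\lambda \in \mathbf{H}^{-1/2}_m(\Gamma)$, set $\boldsymbol\lambda^h := \mathrm{G}_h(s)\mathrm{V}(s)\boldsymbol\lambda$, and write $\mathbf{e} := \mathrm{S}(s)\mathrm{E}_h(s)\boldsymbol\lambda = \mathrm{S}(s)\boldsymbol\lambda^h - \mathrm{S}(s)\boldsymbol\lambda$. The plan is to rerun the energy arguments behind Propositions \ref{prop:N3.3}(d) and \ref{prop:5.6}, but now on a discrete solenoidal subspace. The key starting observation is that the Galerkin identity \eqref{eq:7.A} rewrites as $\langle\boldsymbol\mu^h,\gamma\mathbf{e}\rangle_\Gamma = 0$ for every $\boldsymbol\mu^h\in\mathbf{X}_h$, so $\gamma\mathbf{e}\in\mathbf{X}_h^\circ$ and hence $\mathbf{e}\in\widehat{\mathbf{V}}_h(\mathbb{R}^d)$ in the sense of \eqref{eq:7.2}.

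Subtracting the solenoidal variational form of Propositions \ref{prop:4.3}--\ref{prop:4.8} for $\mathrm{S}(s)\boldsymbol\lambda$ from the characterization \eqref{eq:7.3} of $\mathrm{S}(s)\boldsymbol\lambda^h$ produces the error equation
\begin{equation*}
a(\mathbf{e},\mathbf{v}^h) + s(\mathbf{e},\mathbf{v}^h)_{\mathbb{R}^d} = -\langle\boldsymbol\lambda,\gamma\mathbf{v}^h\rangle_\Gamma \qquad \forall\,\mathbf{v}^h\in\widehat{\mathbf{V}}_h(\mathbb{R}^d).
\end{equation*}
Because $\mathbf{X}_h$, and hence $\widehat{\mathbf{V}}_h(\mathbb{R}^d)$, is closed under conjugation, I can test with $\mathbf{v}^h = \overline{\mathbf{e}}$ and take the real part of $\overline{s}^{1/2}$ times the identity, exactly as in the proof of Proposition \ref{prop:N3.3}(b). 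The left side yields $\omega\triple{\mathbf{e}}_{(s)}^2$; on the right the trace theorem combined with \eqref{eq:5.1}--\eqref{eq:5.2} absorbs a factor $|s|^{1/2}/\alpha_1(s)$, producing after cancellation the intermediate estimate $\triple{\mathbf{e}}_{(s)} \le C\,|s|^{1/2}/(\omega\underline\omega)\,\|\boldsymbol\lambda\|_{-1/2,\Gamma}$. One additional application of \eqref{eq:5.1} converts this into the target bound $\|\mathrm{S}(s)\mathrm{E}_h(s)\| \le C\,|s|^{1/2}/(\omega\underline\omega^2)$.

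For the first bound I note that $\boldsymbol\lambda^h - \boldsymbol\lambda$ still lives in $\mathbf{H}^{-1/2}_m(\Gamma)$, so the duality inequality \eqref{eq:5.a} applies and gives $\|\mathrm{E}_h(s)\boldsymbol\lambda\|_{-1/2,\Gamma} \le C\,|s|^{1/2}/\underline\omega\,\triple{\mathbf{e}}_{(s)}$; feeding in the intermediate bound for $\triple{\mathbf{e}}_{(s)}$ from the previous paragraph yields the desired $\|\mathrm{E}_h(s)\| \le C\,|s|/(\omega\underline\omega^2)$. The sole conceptually nontrivial step is the reformulation of the Galerkin orthogonality as membership of $\mathbf{e}$ in $\widehat{\mathbf{V}}_h(\mathbb{R}^d)$; after that, the proof is Propositions \ref{prop:N3.3}(d) and \ref{prop:5.6} transcribed onto the discrete subspace with the modified load $-\langle\boldsymbol\lambda,\gamma\,\cdot\,\rangle_\Gamma$, and the only real care required lies in tracking the $\alpha_1,\alpha_2,\underline\omega$ factors --- no new technical difficulty is expected, and $C$ is manifestly $h$-independent since $\mathbf{X}_h$ enters only through the choice of test subspace.
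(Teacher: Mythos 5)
Your proof is correct and follows essentially the same route as the paper's: the positivity identity of Proposition \ref{prop:N3.3}(b) (which you recover by testing the error equation with $\overline{\mathbf e}$ and taking $\mathrm{Re}$ of $\overline s^{1/2}$ times it), Galerkin orthogonality to eliminate the discrete contribution, the trace theorem with \eqref{eq:5.1}--\eqref{eq:5.2} to get $\triple{\mathbf e}_{(s)}\le C|s|^{1/2}/(\omega\underline\omega)\|\boldsymbol\lambda\|_{-1/2,\Gamma}$, and \eqref{eq:5.a} for the density bound. The only cosmetic difference is that you route the orthogonality through membership of $\mathbf e$ (and $\overline{\mathbf e}$) in the discrete solenoidal space $\widehat{\mathbf V}_h(\mathbb R^d)$ of \eqref{eq:7.2}, whereas the paper cancels the term $\langle\overline s^{1/2}\overline{\boldsymbol\lambda}^h,\mathrm V(s)(\boldsymbol\lambda^h-\boldsymbol\lambda)\rangle_\Gamma$ directly in the boundary bilinear form; the intermediate and final estimates coincide.
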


\begin{proof}
Let $\boldsymbol\lambda\in \mathbf H^{-1/2}_m(\Gamma)$ and
$
\mathbf w^h:=\mathrm S(s) \mathrm E_h(s)\boldsymbol\lambda=\mathrm S(s)(\boldsymbol\lambda^h-\boldsymbol\lambda),
$
where $\boldsymbol\lambda^h$ is the solution of \eqref{eq:7.A}.
Proceeding as in the proof of Proposition \ref{prop:5.5} we can show that
\begin{alignat*}{4}
\omega \triple{\mathbf w^h}_{(s)}^2 &= \mathrm{Re}\,\langle\overline s^{1/2} (\overline{\boldsymbol\lambda}^h-\overline{\boldsymbol\lambda}),\mathrm V(s)(\boldsymbol\lambda^h-\boldsymbol\lambda)\rangle_\Gamma &\qquad &\mbox{by Proposition \ref{prop:N3.3}(b)}\\
&= -\mathrm{Re}\,\langle\overline s^{1/2} \overline{\boldsymbol\lambda},\mathrm V(s)(\boldsymbol\lambda^h-\boldsymbol\lambda)\rangle_\Gamma &\qquad &\mbox{by Galerkin orthogonality}\\
& \le C_\Gamma |s|^{1/2} \|\boldsymbol\lambda\|_{-1/2,\Gamma} \triple{\mathbf w^h}_{(1)} & & \mbox{by the trace theorem}.
\end{alignat*}
Therefore, by \eqref{eq:5.1} it follows that 
\[
 \qquad \triple{\mathbf w^h}_{(s)}\le C \frac{|s|^{1/2}}{\omega\,\underline\omega}\|\boldsymbol\lambda\|_{-1/2,\Gamma} \qquad \mbox{and}\qquad \triple{\mathbf w^h}_{(1)}\le C\frac{|s|^{1/2}}{\omega\underline\omega^2}\|\boldsymbol\lambda\|_{-1/2,\Gamma}.
\]
The second bound gives the estimate for $\|\mathrm S(s)\mathrm E_h(s)\|$.
Using now \eqref{eq:5.a}, we can bound
\[
\| \mathrm E_h(s)\boldsymbol\lambda\|_{-1/2,\Gamma} =\|\boldsymbol\lambda^h-\boldsymbol\lambda\|_{-1/2,\Gamma} \le C \frac{|s|^{1/2}}{\underline\omega} \triple{\mathbf w^h}_{(s)}\le  C \frac{|s|}{\omega\underline\omega^2}\|\boldsymbol\lambda\|_{-1/2,\Gamma},
\]
which finishes the proof.
\end{proof}

Since $\mathrm I-\mathrm E_h(s)+\mathrm I=\mathrm G_h(s)\mathrm V(s)$ is a projection onto $\mathbf X_h$, if $\boldsymbol\Pi_h:\mathbf H^{-1/2}_m(\Gamma) \to \mathbf X_h$ is any projection onto the discrete space we can write
\begin{equation}\label{eq:7.5}
\mathrm E_h(s)=\mathrm E_h(s)(\mathrm I-\boldsymbol\Pi_h).
\end{equation}
This decomposition will be used to derive error estimates.

\subsection{Bounds in the time domain}

\begin{proposition}[Stability; bounds with respect to data]
Let $\boldsymbol\phi \in \mathcal C^2_+(\mathbb R;\mathbf H^{1/2}_n(\Gamma))$ and let $\boldsymbol\lambda^h$ be the solution of \eqref{eq:7.1} and $\mathbf u^h$ be given by \eqref{eq:7.1b}. Then
\[
\boldsymbol\lambda^h \in \mathcal C_+(\mathbb R;\mathbf X_h) \qquad \mathbf u^h \in \mathcal C_+(\mathbb R;\mathbf H^1(\mathbb R^d))
\]
and
\begin{eqnarray*}
\|\boldsymbol\lambda^h(t)\|_{-1/2,\Gamma} &\le & C \max\{ 1,t^2\}\max_{0\le \tau\le t} \|\boldsymbol\phi''(\tau)\|_{1/2,\Gamma},\\
\|\mathbf u^h(t)\|_{1,\mathbb R^d} &\le & C \max\{ 1,t^{5/2}\}\max_{0\le \tau\le t} \|\boldsymbol\phi''(\tau)\|_{1/2,\Gamma}.
\end{eqnarray*}
\end{proposition}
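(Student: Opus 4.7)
The plan is to follow closely the template of the proof of Proposition \ref{prop:6.4}, with the operators $\mathrm G_h(s)$ and $\mathrm S(s)\mathrm G_h(s)$ from Section \ref{sec:5} playing the roles that $\mathrm V(s)^{-1}$ and $\mathrm S(s)\mathrm V(s)^{-1}$ played there.

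First I would settle existence, uniqueness, and causality of $\boldsymbol\lambda^h$ via a Paley--Wiener argument. By causality it is enough to consider data $\boldsymbol\phi$ with support bounded above in $t$, so that $\boldsymbol\phi$ admits a Laplace transform $\boldsymbol\Phi$. For each $s\in\mathbb C_\star$, the coercivity estimate of Proposition \ref{prop:N3.3}(d), which transfers to the subspace $\mathbf X_h\subset \mathbf H^{-1/2}_m(\Gamma)$, makes the Galerkin equation \eqref{eq:7.1c} uniquely solvable, so in the Laplace domain
\[
\boldsymbol\Lambda^h(s)=\mathrm G_h(s)\boldsymbol\Phi(s),\qquad \mathbf U^h(s)=\mathrm S(s)\mathrm G_h(s)\boldsymbol\Phi(s).
\]
The polynomial-in-$|s|$ bounds of Proposition \ref{prop:7.1} combined with Paley--Wiener then produce unique causal time-domain originals, which must coincide with the $\boldsymbol\lambda^h$ and $\mathbf u^h$ defined by \eqref{eq:7.1}--\eqref{eq:7.1b}.

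Next I would recast the bounds of Proposition \ref{prop:7.1} as the hypothesis of Corollary \ref{cor:6.2}. Setting $C_{\mathrm F}(\omega):=C/(\omega\,\underline\omega^2)$ with $\underline\omega=\min\{1,\omega\}$, this function is non-increasing on $(0,\infty)$ and satisfies $C_{\mathrm F}(\omega)\le C\omega^{-3}$ as $\omega\to 0^+$, so condition \eqref{eq:6.2} holds with $\ell=3$. Proposition \ref{prop:7.1} then reads $\|\mathrm G_h(s)\|\le C_{\mathrm F}(\omega)\,|s|^{1+1/2}$ and $\|\mathrm S(s)\mathrm G_h(s)\|\le C_{\mathrm F}(\omega)\,|s|^{1+0}$, placing us in Corollary \ref{cor:6.2} with $(\mu,\ell)=(1/2,3)$ and $(0,3)$ respectively. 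Since $\ell/2+1-\mu$ equals $2$ in the first case and $5/2$ in the second, applying Corollary \ref{cor:6.2} with $g=\boldsymbol\phi$ yields the stated bounds, with $\boldsymbol\phi''$ appearing on the right-hand side.

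There is no serious analytical obstacle: the estimates of Proposition \ref{prop:7.1} have, uniformly in $h$, exactly the same $|s|$-growth and the same $\omega$-blow-up as the ones used in the proof of Proposition \ref{prop:6.4}, so the Paley--Wiener plus Corollary \ref{cor:6.2} machinery applies almost verbatim. The only verification worth performing explicitly is the monotonicity and polynomial blow-up of $C_{\mathrm F}(\omega)$, both of which follow at once from the definition of $\underline\omega$ and the elementary observation that $\omega\mapsto \omega\,\underline\omega^2$ is non-decreasing.
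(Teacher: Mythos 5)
Your proof is correct and follows essentially the same route as the paper: the paper's own argument is precisely to feed the Laplace-domain bounds for $\mathrm G_h(s)$ and $\mathrm S(s)\mathrm G_h(s)$ of Proposition \ref{prop:7.1} into Corollary \ref{cor:6.2}, with $(\mu,\ell)=(1/2,3)$ and $(0,3)$ giving the exponents $t^2$ and $t^{5/2}$. Your additional Paley--Wiener verification of existence, uniqueness, and causality of $\boldsymbol\lambda^h$ (mirroring the proof of Proposition \ref{prop:6.4}) and the check of condition \eqref{eq:6.2} for $C_{\mathrm F}(\omega)=C/(\omega\,\underline\omega^2)$ are details the paper leaves implicit, and both are handled correctly.
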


\begin{proof} It is a direct consequence of Corollary \ref{cor:6.2} (abstract result in the time domain) and Proposition \ref{prop:7.1} (Laplace domain bounds for the Galerkin solver).
\end{proof}

For the error estimates we use the orthogonal projection operator $\boldsymbol\Pi_h:\mathbf H^{-1/2}_m(\Gamma) \to\mathbf X_h$. We first give an estimate of the velocity field, which requires much less regularity in time. We will next give an estimate for the density, which will in  turn give an estimate for the pressure field.

\begin{proposition}[Error estimate for the velocity field]
Assume that $\boldsymbol\lambda \in \mathcal C^1_+(\mathbb R;\mathbf H^{-1/2}_m(\Gamma))$. Then
\[
\|\mathbf u^h(t)-\mathbf u(t)\|_{1,\mathbb R^d}\le C \max\{1,t^2\} \max_{0\le \tau\le t}\| \boldsymbol\lambda'(\tau)-\boldsymbol\Pi_h\boldsymbol\lambda'(\tau)\|_{-1/2,\Gamma}.
\]
\end{proposition}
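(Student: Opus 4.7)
The starting point is the identity $\mathbf u^h-\mathbf u=\mathcal S*(\boldsymbol\lambda^h-\boldsymbol\lambda)$, which is immediate from $\mathbf u=\mathcal S*\boldsymbol\lambda$ and $\mathbf u^h=\mathcal S*\boldsymbol\lambda^h$. Since $\mathcal V*\boldsymbol\lambda=\boldsymbol\phi$ (the continuous equation) and $\boldsymbol\lambda^h$ is obtained by applying the Galerkin solver $\mathrm G_h(s)$ to the same data in the Laplace domain, we have $\boldsymbol\lambda^h-\boldsymbol\lambda=(\mathrm G_h(s)\mathrm V(s)-\mathrm I)\boldsymbol\lambda=\mathrm E_h(s)\boldsymbol\lambda$. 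Consequently, $\mathbf u^h-\mathbf u$ is the time-domain convolution operator whose Laplace transform is $\mathrm S(s)\mathrm E_h(s)$, applied to the causal input $\boldsymbol\lambda$.

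The first key step is to insert the projection. By the identity \eqref{eq:7.5}, $\mathrm S(s)\mathrm E_h(s)=\mathrm S(s)\mathrm E_h(s)(\mathrm I-\boldsymbol\Pi_h)$. Since $\boldsymbol\Pi_h$ is bounded and time-independent, it commutes with time differentiation and with the convolution, so
\[
\mathbf u^h-\mathbf u=\bigl(\mathcal S\,*\,\mathcal E_h\bigr)*(\mathrm I-\boldsymbol\Pi_h)\boldsymbol\lambda,
\]
where $\mathcal S*\mathcal E_h$ denotes the causal distribution with Laplace transform $\mathrm S(s)\mathrm E_h(s)$. This is the standard trick that allows the best-approximation error $\boldsymbol\lambda-\boldsymbol\Pi_h\boldsymbol\lambda$ to appear on the right-hand side.

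The second key step is to apply the abstract inversion result of Proposition \ref{prop:6.1}. The Laplace-domain bound from Proposition \ref{prop:7.2} reads
\[
\|\mathrm S(s)\mathrm E_h(s)\|_{\mathbf H^{-1/2}_m(\Gamma)\to\mathbf H^1(\mathbb R^d)} \le C\,\frac{|s|^{1/2}}{\omega\,\underline\omega^2},
\]
and since $\underline\omega=\min\{1,\omega\}\le 1$, this matches hypothesis \eqref{eq:6.1}-\eqref{eq:6.2} with $\mu=1/2$ and $\ell=3$. Applying Proposition \ref{prop:6.1} to the $\mathcal C^1_+$ input $(\mathrm I-\boldsymbol\Pi_h)\boldsymbol\lambda$ gives the time-domain bound with factor $\min\{1,t^{\ell/2+1-\mu}\}=\min\{1,t^2\}$, and the maximum on the right becomes $\max_{0\le\tau\le t}\|((\mathrm I-\boldsymbol\Pi_h)\boldsymbol\lambda)'(\tau)\|_{-1/2,\Gamma}=\max_{0\le\tau\le t}\|\boldsymbol\lambda'(\tau)-\boldsymbol\Pi_h\boldsymbol\lambda'(\tau)\|_{-1/2,\Gamma}$ by linearity. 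Since $\min\{1,t^2\}\le \max\{1,t^2\}$, the stated estimate follows.

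The proof is essentially bookkeeping and I do not anticipate a real obstacle: the substantive work has already been done in Proposition \ref{prop:7.2} (the Laplace-domain estimate) and in Proposition \ref{prop:6.1} (the inverse-transform lemma). The only minor point to verify is that the hypotheses of Proposition \ref{prop:6.1} are genuinely met with the indicated $\mu$ and $\ell$, i.e., that $C_{\mathrm F}(\omega)=C/(\omega\underline\omega^2)$ is non-increasing and satisfies $C_{\mathrm F}(\omega)\le C\omega^{-3}$ as $\omega\to 0$, which is immediate from the definition of $\underline\omega$.
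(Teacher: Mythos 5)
Your argument is correct and is essentially the paper's own proof: introduce the orthogonal projector through the identity \eqref{eq:7.5}, use the second bound of Proposition \ref{prop:7.2} as the Laplace-domain estimate for $\mathrm S(s)\mathrm E_h(s)$, and transfer to the time domain via Proposition \ref{prop:6.1}. Your parameters $\mu=1/2$, $\ell=3$ are the ones consistent with the bound $C|s|^{1/2}/(\omega\,\underline\omega^2)$ and with the stated $t^2$ factor (the paper's proof sketch cites $\ell=2$, which appears to be a slip), so no gap remains.
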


\begin{proof}
This is the time-domain version of the second bound of Proposition \ref{prop:7.2}, using Proposition \ref{prop:6.1} (with $\mu=1/2$ and $\ell=2$) and the identity \eqref{eq:7.5} in order to introduce the orthogonal projector.
\end{proof}

\begin{proposition}[Error estimate for density and pressure field]
Assume that $\boldsymbol\lambda \in \mathcal C^2_+(\mathbb R;\mathbf H^{-1/2}_m(\Gamma))$. Then
\begin{eqnarray*}
\|\boldsymbol\lambda^h(t)-\boldsymbol\lambda(t)\|_{-1/2,\Gamma} &\le& C \max\{1,t^{5/2}\} \max_{0\le \tau\le t}\| \boldsymbol\lambda''(\tau)-\boldsymbol\Pi_h\boldsymbol\lambda''(\tau)\|_{-1/2,\Gamma}\\
\|p^h(t)-p(t)\|_{B} &\le& C \max\{1,t^{5/2}\} \max_{0\le \tau\le t}\| \boldsymbol\lambda''(\tau)-\boldsymbol\Pi_h\boldsymbol\lambda''(\tau)\|_{-1/2,\Gamma},
\end{eqnarray*}
where $B=\mathbb R^3$ or $B$ is any bounded open set in $\mathbb R^2$.
\end{proposition}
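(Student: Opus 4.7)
The plan is to derive both estimates by combining the Laplace-domain bound of Proposition \ref{prop:7.2} on the error operator $\mathrm E_h(s)$ with the time-domain transfer provided by Corollary \ref{cor:6.2}, and then to recover the pressure bound from the density bound by applying the (time-independent) steady-state pressure operator $\mathrm S_p$.

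For the density estimate, the first observation is that $\boldsymbol\lambda^h-\boldsymbol\lambda=\mathcal E_h*\boldsymbol\lambda$, where $\mathcal E_h$ denotes the causal operator-valued distribution whose Laplace transform is $\mathrm E_h(s)$. The bound $\|\mathrm E_h(s)\|\le C\,|s|/(\omega\underline\omega^2)$ from Proposition \ref{prop:7.2} has exactly the form required by Corollary \ref{cor:6.2} with $\mu=0$ and, since $\omega\underline\omega^2\sim\omega^3$ as $\omega\to 0^+$, with $\ell=3$. The corollary then produces the $\max\{1,t^{5/2}\}$ growth appearing in the statement (the exponent $\ell/2+1-\mu=5/2$ matching the pattern of the other time-domain bounds already established), under the $\mathcal C^2$ regularity hypothesis on $\boldsymbol\lambda$ that is assumed. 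To turn the $\boldsymbol\lambda''$ that appears on the right-hand side into a best-approximation error, I would invoke the factorization \eqref{eq:7.5}, namely $\mathrm E_h(s)=\mathrm E_h(s)(\mathrm I-\boldsymbol\Pi_h)$, which, since $\boldsymbol\Pi_h$ is time-independent and therefore commutes with differentiation in $t$, yields $\mathcal E_h*\boldsymbol\lambda=\mathcal E_h*(\boldsymbol\lambda-\boldsymbol\Pi_h\boldsymbol\lambda)$ and hence the stated bound with $\boldsymbol\lambda''-\boldsymbol\Pi_h\boldsymbol\lambda''$ on the right.

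For the pressure estimate, I use that $\mathrm S_p$ is time-independent and linear, so
\[
p^h(t)-p(t)=\mathrm S_p\bigl(\boldsymbol\lambda^h(t)-\boldsymbol\lambda(t)\bigr)
\]
pointwise in $t$. In three dimensions, Proposition \ref{prop:2.1}(a) yields $\|\mathrm S_p\boldsymbol\mu\|_{L^2(\mathbb R^3)}\le C\,\|\boldsymbol\mu\|_{-1/2,\Gamma}$, so the bound on $B=\mathbb R^3$ follows directly from the density estimate. In two dimensions the growth of $\mathbf p_\infty\cdot\mathbf e_\ell$ at infinity rules out a global $L^2$ estimate, which is exactly why the statement restricts $B$ to a bounded open subset of $\mathbb R^2$; on such a $B$, I use the decomposition of Proposition \ref{prop:2.2} in the form $\mathrm S_p\boldsymbol\mu=\sum_\ell \jmath_\ell(\boldsymbol\mu)\,\mathbf p_\infty\cdot\mathbf e_\ell+\mathrm S_p^{\mathrm{reg}}\boldsymbol\mu$, where $\mathrm S_p^{\mathrm{reg}}:\mathbf H^{-1/2}(\Gamma)\to \mathbf L^2(\mathbb R^2)$ is bounded and $\jmath_\ell$ is a bounded functional on $\mathbf H^{-1/2}(\Gamma)$; since $\mathbf p_\infty\cdot\mathbf e_\ell\in L^2(B)$ for any bounded $B$, this gives $\|\mathrm S_p\boldsymbol\mu\|_{L^2(B)}\le C_B\,\|\boldsymbol\mu\|_{-1/2,\Gamma}$. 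Chaining with the density estimate yields the second inequality.

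The main obstacle, such as it is, is the two-dimensional pressure bound: one has to carefully extract a $B$-dependent constant by isolating the non-$L^2$ asymptotic tail of $\mathrm S_p$ through Proposition \ref{prop:2.2}, rather than appealing to a single global estimate. Everything else is a straightforward chaining of Proposition \ref{prop:7.2}, the identity \eqref{eq:7.5}, and Corollary \ref{cor:6.2}, and requires no new Laplace-domain estimates beyond those already established.
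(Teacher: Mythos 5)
Your argument is correct and is exactly the paper's proof, only written out in detail: the paper simply says to apply Corollary \ref{cor:6.2} to the bounds of Proposition \ref{prop:7.2} together with the identity \eqref{eq:7.5} for the density, and Propositions \ref{prop:2.1} and \ref{prop:2.2} for the pressure, which is precisely your chaining (including the correct parameters $\mu=0$, $\ell=3$ giving $t^{5/2}$, and the local $L^2(B)$ bound in 2D via the decomposition of Proposition \ref{prop:2.2}).
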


\begin{proof}
Apply Corollary \ref{cor:6.2} to the conclusions of Proposition \ref{prop:7.2} and use the identity \eqref{eq:7.5}. For the bound on the pressure, use Propositions \ref{prop:2.1} and \ref{prop:2.2}.
\end{proof}

\section{Full discretization and numerical experiments}\label{sec:6}

We finally do a full discretization of equations \eqref{eq:7.1} and \eqref{eq:7.1b} using Lubich's multistep-based Convolution Quadrature \cite{Lubich:1988}. We next give a short introduction to this black-box technology applied to our particular problem. More implementation details can be found in \cite{BaSc:2012} and \cite{HaSa:2014} (although for wave propagation problems). Before we introduce the method, let us also mention that there is a faster version (which changes the implementation, but not the method itself) called the fast and oblivious CQ method \cite{ScLoLu:2006} that we will not deal with in this paper.

Let us choose a basis $\{\bs\mu_j\,:\,j=1,\ldots,N\}$ for $\mathbf X_h$, a time-step $\kappa>0$, and let us consider the uniform grid in time $t_n:=n\kappa$, for $n\ge 0$. The data are sampled in time and tested to define vectors
\[
\bs\phi_n\in \mathbb R^N, \qquad \phi_{n,j}:=\langle\bs\mu_j,\bs\phi(t_n)\rangle_\Gamma.
\]	
The transfer operator corresponding to the convolution with $\mathcal V$ is defined as a matrix-valued function of $s\in \mathbb C_\star$:
\[
\mathbf V(s)\in \mathbb C^{N\times N}, \qquad \mathbf V_{ij}(s)=\langle\bs\mu_i,\mathrm V(s)\bs\mu_j\rangle_\Gamma.
\]
The CQ discretization of \eqref{eq:7.1} starts with a Taylor expansion
\[
\mathbf V\left(\smallfrac1\kappa\delta(\zeta)\right)=\sum_{n=0}^\infty \mathbf V_n(\kappa) \zeta^n, \qquad
\mbox{where}\qquad \delta(\zeta):=\sum_{\ell=1}^p \frac1\ell(1-\zeta)^\ell.
\]
The function $\delta(\zeta)$ is the characteristic function of the BDF method of order $p$. Lubich's theoretical results hold for $p\le 6$. Note that most of the matrices $\mathbf V_n(\kappa)\in \mathbb R^{N\times N}$ do not have to be computed in the practical implementation of the method. The discretization of \eqref{eq:7.1} looks for the sequence of vectors $\bs\lambda_n \in \mathbb R^N$ given by the recurrence:
\begin{equation}\label{eq:66.1}
\mathbf V_0(\kappa)\bs\lambda_n =\bs\phi_n-\sum_{m=1}^n \mathbf V_m(\kappa)\bs\lambda_{n-m}, \qquad n\ge 0.
\end{equation}
If $\bs\lambda_n=(\lambda_{n,1},\ldots,\lambda_{n,N})$, we then reconstruct the discrete function $\bs\lambda^h_n:=\sum_{j=1}^N \lambda_{j,n} \bs\mu_j\in \mathbf X_h$. The discrete densities provide the discrete pressure field
\begin{equation}\label{eq:66.2}
p^h_n:=\mathrm S_p\bs\lambda^h_n.
\end{equation}
To compute the discrete velocity field we use another postprocessing of the discrete densities
\begin{equation}\label{eq:66.3}
\mathbf u^h_n:=\sum_{m=0}^n \mathrm S_m(\kappa)\bs\lambda_{n-m},
\qquad
\mbox{where}
\qquad
\mathrm S(\smallfrac1\kappa\delta(\zeta))=\sum_{n=0}^\infty \mathrm S_n(\kappa)\zeta^n.
\end{equation}
The convergence result follows from \cite[Theorem 5.1]{Lubich:1988} by using Proposition \ref{prop:7.1}.

\begin{proposition}
Let $p$ be the order of the BDF method used for the CQ discretization. Assume that $\bs\phi\in \mathcal C^{p+1}_+(\mathbb R;\mathbf H^{1/2}(\Gamma))\cap \mathcal C^{p+2}([0,\infty);\mathbf H^{1/2}(\Gamma))$. Then
\begin{eqnarray}
\| \mathbf u^h(t_n)-\mathbf u^h_n\|_{1,\mathbb R^d}
	&\le & C_1 \kappa^p \max_{0\le \tau\le t} \|\bs\phi^{(p+2)}(\tau)\|_{1/2,\Gamma},\\
\| p^h(t_n)-p^h_n\|_B
	&\le & C_2 \kappa^p \max_{0\le \tau\le t} \|\bs\phi^{(p+2)}(\tau)\|_{1/2,\Gamma},
\end{eqnarray}
where $B=\mathbb R^3$ or $B$ is any bounded open set in $\mathbb R^2$. The constants $C_1$ and $C_2$ depend on $t$, and $C_2$ depends on $B$ in the two-dimensional case. For small $t$, $C_1\le C t$ and $C_2\le C t^{1/2}$.
\end{proposition}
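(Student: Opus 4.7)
The plan is to view this statement as a black-box application of Lubich's multistep CQ convergence theorem (Theorem~5.1 of \cite{Lubich:1988}) to the two operator-valued transfer functions that represent the semidiscrete solution. First I would recognize that the continuous-time semidiscrete solutions satisfy $\mathbf{u}^h = \mathcal{T}_u * \boldsymbol\phi$ and $p^h = \mathcal{T}_p * \boldsymbol\phi$, where the causal distributions $\mathcal{T}_u$ and $\mathcal{T}_p$ have Laplace transforms $\mathrm S(s)\mathrm G_h(s)$ and $\mathrm S_p \mathrm G_h(s)$ respectively (here $\mathrm G_h(s)$ is the Galerkin solver of \eqref{eq:7.1c}). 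The recursion \eqref{eq:66.1} together with the postprocessings \eqref{eq:66.2}--\eqref{eq:66.3} is, by construction, precisely the multistep CQ discretization (of order $p$, based on $\delta(\zeta)$) of these two convolutions. Hence $\mathbf{u}^h(t_n)-\mathbf{u}^h_n$ and $p^h(t_n)-p^h_n$ are the CQ consistency errors for the transfer functions $\mathrm S(s)\mathrm G_h(s)$ and $\mathrm S_p \mathrm G_h(s)$ applied to the datum $\boldsymbol\phi$.

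Second, I would invoke Lubich's theorem. It asserts that if an operator-valued Laplace transform $\mathrm F(s)$ is holomorphic and satisfies $\|\mathrm F(s)\|\le M|s|^\mu$ in a right half-plane $\mathrm{Re}\, s\ge \sigma>0$, then the order-$p$ BDF-based CQ scheme produces an $O(\kappa^p)$ error in terms of a norm of $\boldsymbol\phi^{(\lceil\mu\rceil+p+1)}$ (roughly), with the constant proportional to $M$ and depending on $t$ but independent of any further parameters that are hidden in $\mathrm F$. Applying this with $\mathrm F(s)=\mathrm S(s)\mathrm G_h(s)$, Proposition \ref{prop:7.1} gives $\|\mathrm F(s)\|\le C|s|/(\omega\underline\omega^2)$, which on $\mathrm{Re}\,s\ge\sigma$ becomes $O(|s|^1)$ with a constant \emph{independent of $h$}; this is the $\mu=1$ case and accounts for the regularity requirement $\boldsymbol\phi\in \mathcal C^{p+2}$. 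For the pressure, $\mathrm F(s)=\mathrm S_p\mathrm G_h(s)$ enjoys the companion bound $O(|s|^{3/2})$ after invoking Propositions \ref{prop:2.1}--\ref{prop:2.2}, and this same regularity suffices by the same theorem.

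The two facts that actually require care are (i) the uniformity in $h$ of the Laplace-domain bound, which is precisely what Proposition \ref{prop:7.1} was designed to deliver, and (ii) the small-time scaling $C_1\le Ct$ and $C_2\le Ct^{1/2}$. I expect (ii) to be the main technical point: it does not follow from the statement of Theorem~5.1 in its crudest form, but it is contained in Lubich's proof via the contour-integral representation of the CQ error, which converts the $|s|^\mu$ bound on $\mathrm F(s)$ into the factor $\min\{1,t^{\ell/2+1-\mu}\}$ of the same flavour that already appeared in Proposition \ref{prop:6.1} and Corollary \ref{cor:6.2}. For $\mu=1$ this yields a $Ct$ prefactor and for $\mu=3/2$ a $Ct^{1/2}$ prefactor, matching the claim. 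Everything else is bookkeeping: identifying the transfer functions, quoting the uniform Laplace-domain bounds from Proposition \ref{prop:7.1}, and applying Lubich's theorem componentwise.
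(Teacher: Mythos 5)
Your proposal is correct and follows essentially the same route as the paper, whose proof is precisely the one-line appeal to Lubich's Theorem 5.1 combined with the $h$-uniform Laplace-domain bounds of Proposition \ref{prop:7.1} (with the $s$-independent boundedness of $\mathrm S_p$ from Propositions \ref{prop:2.1}--\ref{prop:2.2} handling the pressure postprocessing). You simply spell out the details the paper leaves implicit, namely the identification of the transfer functions $\mathrm S(s)\mathrm G_h(s)$ and $\mathrm S_p\mathrm G_h(s)$ and the origin of the small-$t$ behavior of the constants inside Lubich's contour-integral argument.
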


\paragraph{A first numerical experiment.} In order to be able to compare our method with an exact solution we will solve problem \eqref{eq:6.11} in the domain $\Omega_-=(-1,1)^2$. We choose the data so that the exact solution is
\[
\mathbf u(t)(x,y)=\sin^9(t) H(t) \left[\begin{array}{c} 2x \\ -2y \end{array}\right],
\qquad
p(t)(x,y)=-9\sin^8(t)\cos(t)H(t) (x^2-y^2),
\]
where $H$ is the Heaviside function. The exact density $\bs\lambda(t)$ is not known. Note that even if the exact solution is smooth, there is no guarantee that $\bs\lambda(t)$ will be a smooth function in the space variable. We integrate from $t=0$ to $t=1$. For discretization in space we choose a uniform partition of $\Gamma=\partial\Omega_-$ in $N$ equally sized elements $\{e_1,\ldots,e_N\}$ where $N$ is a multiple of four. We then consider the spaces
\[
\mathbf X_h^+:=\{ \bs\lambda^h:\Gamma \to \mathbb R^2\,:\, \bs\lambda^h|_{e_j} \in \mathcal P^1(e_j)^2\quad \forall j\},
\qquad
\mathbf X_h:=\mathbf X_h^+\cap \mathbf H^{-1/2}_m(\Gamma), 
\]
where $\mathcal P^1$ is the space of polynomials of degree less than or equal to one. Instead of building a basis for $\mathbf X_h$, we will enforce densities to be in $\mathbf X_h$ using two Lagrange multipliers. This only affects the matrix $\mathbf V_0(\kappa)$ in \eqref{eq:66.1}. Time discretization is carried out with CQ using BDF(3) as ODE solver in the background, using $M$ time-steps to reach $t=1$. We then compute errors for the pressure and the velocity
\[
\max_j |\mathbf u(1)(x_j,y_j)-\mathbf u^h_M(x_j,y_j)| \qquad
\max_j |p(1)(x_j,y_j)-p^h_M(x_j,y_j)|,
\]
where
\[
(x_1,y_1):=(-0.5,-0.5), \quad 
(x_2,y_2):=(0.3,0.7),\quad
(x_3,y_3):=(0.6,0.2).
\]
If $\bs\lambda$ were smooth as a function of the space variable (which we do not know), the expected convergence order predicted by the theory would be $h^{2.5}+\kappa^3$, where $h=1/N$ and $\kappa=1/M$. This does not take into account the possible regularization effects of the potentials. We note that, to the best of our knowledge, there is no theory of time-domain integral equations that is able to predict higher order convergence in weaker norms. The results are shown in Table \ref{table:1}

\begin{table}[h]
\begin{center}
\begin{tabular}{c|c|c|c|c|c}
$N$	&	$M$ 	&	\text{errU}		& 	\text{e.c.r. }	& 	\text{errP}		& 	\text{e.c.r} 	\\
\hline
4 	& 	10	& 	1.6448e-02	& 		-		&	6.9116e-02	&	-			\\
\hline
8	&	20 	&	9.5414e-03	& 	0.79 	& 	6.3904e-02	& 	0.11	\\
\hline
16 	& 	40 	&	1.2200e-03	& 	2.97	& 	2.4554e-03	& 	4.70	\\
\hline
32 	& 	80	& 	5.8683e-05	& 	4.38	&	8.4062e-04	&	1.55	\\
\hline
64 	&    160 	&  	1.7639e-05	& 	1.73	& 	1.3247e-04	&	2.67	\\
\hline
128 	&     320	&	2.2716e-06	&	2.96	& 	1.0263e-05	&	3.69	\\
\hline
256 	&     640 	&	1.9787e-07	&	3.52	& 	2.9564e-07	& 	5.12	\\
\end{tabular}
\end{center}
\caption{Results at time $t=1$ measured on three points interior to a square. Time-stepping is carried out with BDF(3)-based CQ and discontinuous piecewise linear functions are used for space discretization.}\label{table:1}
\end{table}

\paragraph{A second experiment.} We deal with the same exact solution but now use the unit circle as the domain.  We measure the same errors, based now on three observation points placed at $(0,0)$, $(1/2,1/2)$ and $(-.6,.1)$. We used BDF(3) as the time stepping method, taking $M$ time steps to reach $t=1$. For space discretization we use piecewise constant functions on a uniform grid (in parameter space), with $N$ elements, and reduced integration. The fully discrete method that we obtain is equivalent to a Nystr\"om method of the class given in \cite{DoLuSa:2014}. Because of the smoothness of the domain, the density is a smooth function of the space variables and it is to be expected that order three convergence can be observed for potential postprocessings, although this has never been proved for problems in the time domain. The results are reported in Table \ref{table:2}.
 
\begin{table}[h]
\begin{center}
\begin{tabular}{c|c|c|c|c|c}
$N$    &    $M$     &    \text{errU}        & \text{e.c.r. }    &     \text{errP}        & \text{e.c.r}     \\
\hline
20     &     20    &     1.2285e-03    &         -     &    3.9793e-03    &    -            \\
\hline
40    &    40     &    1.3750e-04    &     3.16     &     4.0498e-04    &    3.30    \\
\hline
80    &     80     &    1.7287e-05    &     2.99     &     4.9458e-05    &    3.04    \\
\hline
160     &     160    &     2.1636e-06    &     2.99         &    6.1078e-06    &    3.02    \\
\hline
320     &     320     &      2.7053e-07    &     3.00         &     7.5887e-07    &    3.01    \\
\hline
640     &     640    &    3.3819e-08    &    3.00     &     9.4578e-08    &    3.00    \\
\end{tabular}
\end{center}
\caption{Results at time $t=1$ measured on three points interior to the unit circle. Time-stepping is carried out with BDF(3)-based CQ. Piecewise constant functions with reduced integration are used in the space variable.}\label{table:2}
\end{table}

\paragraph{An illustration.} We finally show some snapshots of a time simulation for an exterior problem. The Dirichlet data is of the form $\bs\phi(\mathbf x,t)=f(t) (1/\sqrt2,1/\sqrt2)$, where $f$ is a smooth causal function whos shape can be seen in the third column of Figure \ref{fig:2}. The domain is a smooth six sided start. In figure \ref{fig:2} we show vorticity and pressure at different times.

\newpage

\begin{figure}[H]
\begin{center}

\includegraphics[scale=.25]{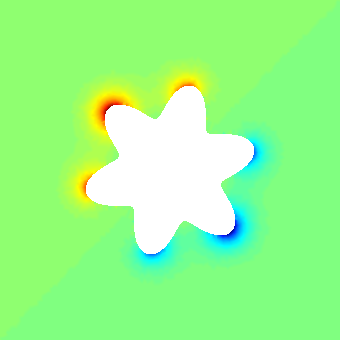}\hspace{3ex}
\includegraphics[scale=.25]{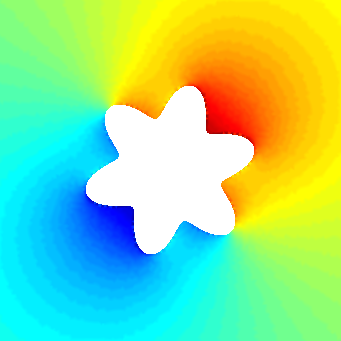}\hspace{3ex}
\includegraphics[scale=.25]{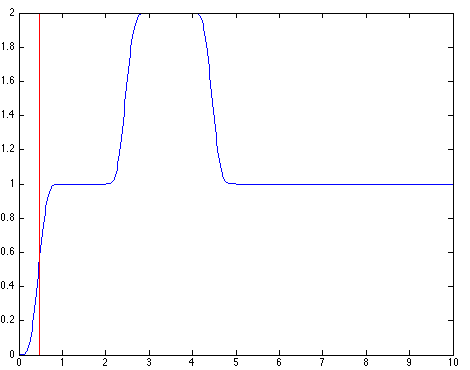}

\includegraphics[scale=.25]{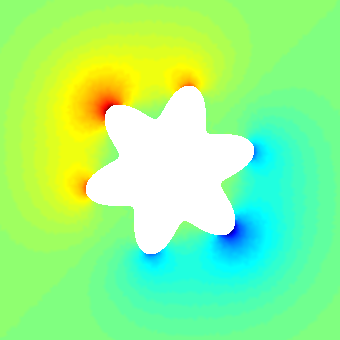}\hspace{3ex}
\includegraphics[scale=.25]{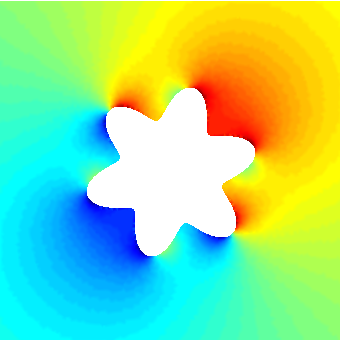}\hspace{3ex}
\includegraphics[scale=.25]{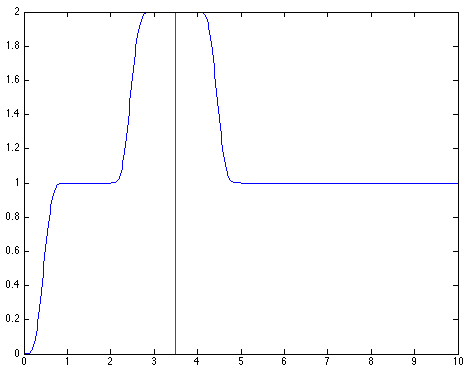}

\includegraphics[scale=.25]{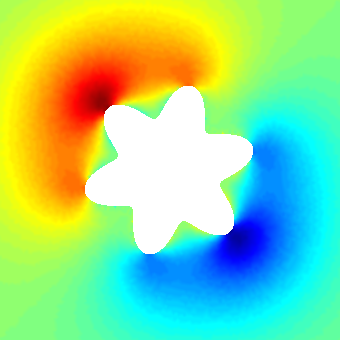}\hspace{3ex}
\includegraphics[scale=.25]{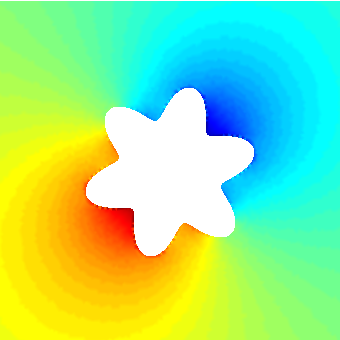}\hspace{3ex}
\includegraphics[scale=.25]{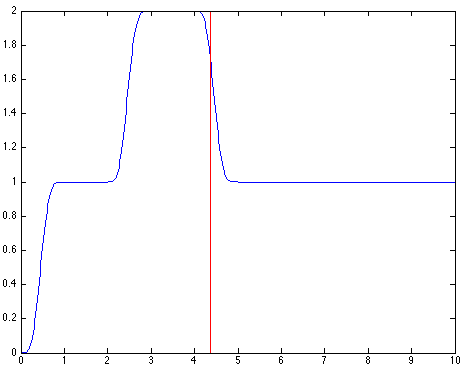}

\includegraphics[scale=.25]{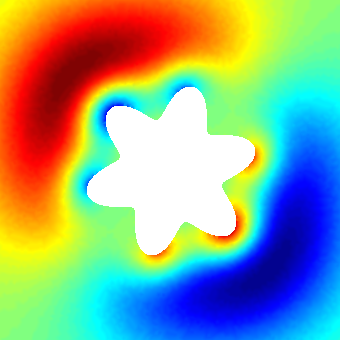}\hspace{3ex}
\includegraphics[scale=.25]{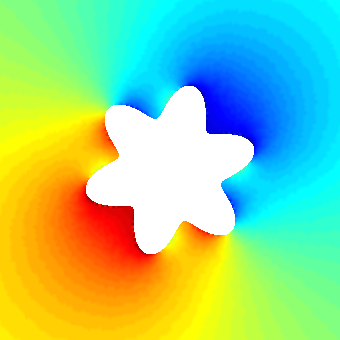}\hspace{3ex}
\includegraphics[scale=.25]{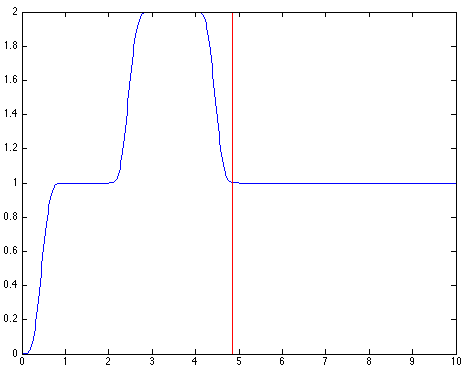}

\includegraphics[scale=.25]{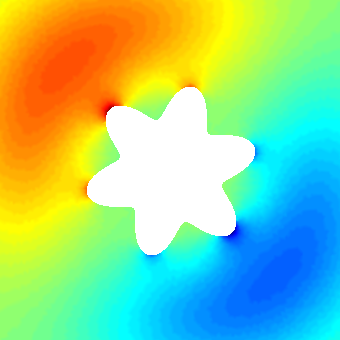}\hspace{3ex}
\includegraphics[scale=.25]{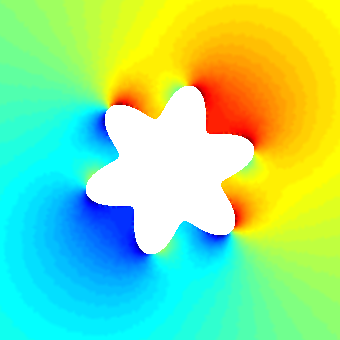}\hspace{3ex}
\includegraphics[scale=.25]{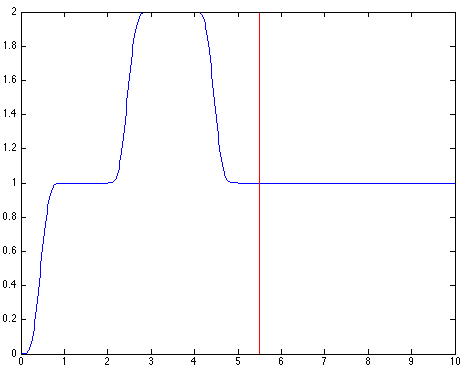}

\includegraphics[scale=.25]{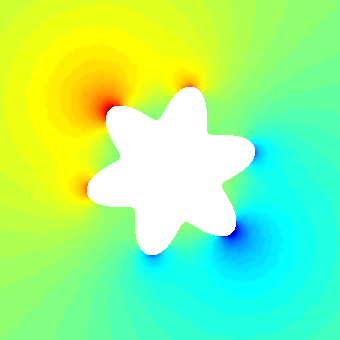}\hspace{3ex}
\includegraphics[scale=.25]{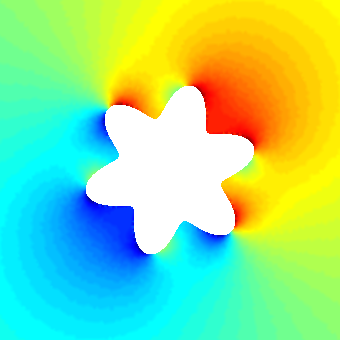}\hspace{3ex}
\includegraphics[scale=.25]{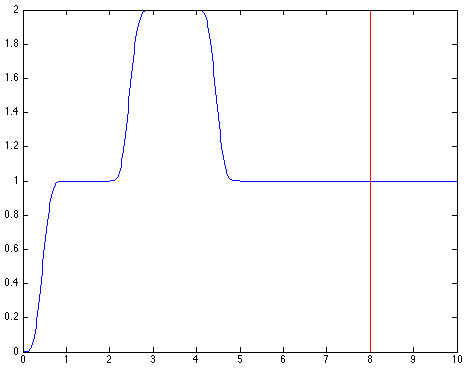}

\end{center}
\caption{Six stages of the Stokes flow produced by a fixed direction non-uniform flow. The right column shows the time as a verticle bar running along the graph of the time-variable function that marks the speed of the flow. The left column shows vorticity and the middle column, pressure.}\label{fig:2}
\end{figure}

\bibliography{referencesStokesBEM}

\appendix

\section{Proof of Proposition \ref{prop:6.1}}\label{app:A}

Since the result gives estimates of the convolution $f*g$, when $g\in\mathcal C^1_+(\mathbb R;X)$, and the convolution with $f$ is a causal operator, we can assume (without loss of generality) that $g$ and $g'$ are uniformly bounded. The following function
\[
a(s,t):=\frac{\mathrm d}{\mathrm dt}\int_0^t e^{s(t-\tau)} g(\tau)\mathrm d\tau=\int_0^t e^{s(t-\tau)}g'(\tau)\mathrm d\tau
\]
is well defined for all $t\in [0,\infty)$ and $s\in \mathbb C$. It is then possible to show (see \cite[Lemma 2.2]{LuSc:1992}) that
\begin{equation}\label{eq:6.3}
(f*g)(t)=\frac1{2\pi \imath} \int_\Gamma s^{-1} \mathrm F(s) a(s,t)\mathrm d s
\end{equation}
for a variety of integration contours. (This is shown by proving that the Laplace transform of the function in the right-hand side of  \eqref{eq:6.3} is $\mathrm F\,\mathrm G$.) Here we choose a two-parameter family of contours (see Figure \ref{fig:1}), formed by three pieces:
\begin{eqnarray*}
(-\infty,-c] \ni \rho & \longmapsto & z_-(\rho):=-\rho \,e^{-\imath(\pi-\phi)},\\
{}[-(\pi-\phi),\pi-\phi]\ni \rho &\longmapsto & z_0(\rho):=c e^{\imath\rho},\\
{}[c,\infty) \ni \rho &\longmapsto & z_+(\rho) := \rho \,e^{\imath (\pi-\phi)}.
\end{eqnarray*}
The parameter $c>0$ will play a decisive role in the estimates below, while $\phi\in (0,\pi/2)$ does not seem to be relevant for the following bounds.
\begin{figure}[H]
\begin{center}
\includegraphics[width=8cm]{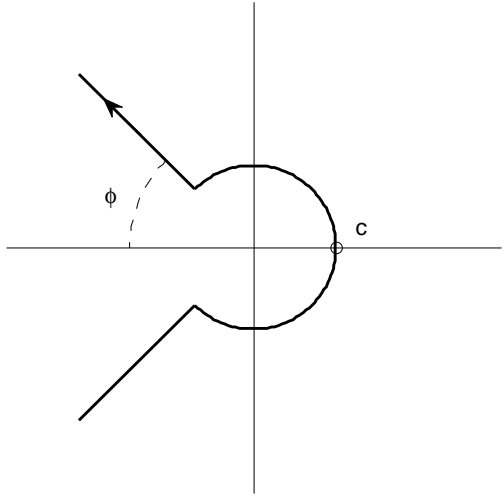}
\end{center}\caption{The contours in the proof of Proposition \ref{prop:6.1}}\label{fig:1}
\end{figure}
We first note that for all $t\ge 0$ and $s\in \mathbb C$,
\begin{equation}\label{eq:6.4}
\| a(s,t)\|\le \| g'\|_t 
\left\{ 
\begin{array}{ll} 
t\, e^{t\,\mathrm{Re}\,s} & \mathrm{Re}\,s\ge 0,\\
t, & \mathrm{Re}\,s\le 0,\\
\frac1{|\mathrm{Re}\,s|}, &\mathrm{Re}\,s<0,
\end{array}
\right.\qquad\mbox{where}\qquad  \| g'\|_t:= \max_{0\le \tau\le t} \| g'(\tau)\|.
\end{equation}
We start by bounding the part of the contour integral \eqref{eq:6.3} that arises from the central path $\Gamma_0=\{ z_0(\rho):|\rho|\le \pi-\phi\}$. Using
\[
|z_0(\rho)|= |z_0'(\rho)|=c, \quad \mathrm{Re}\, z_0(\rho)^{1/2}=\sqrt{c}\cos\smallfrac\theta2\ge \sqrt{c}\cos\smallfrac{\pi-\phi}2=\sqrt{c}\sin\smallfrac\phi2, \quad \mathrm{Re}\,z_0(\rho)\le c,	
\]
and \eqref{eq:6.4}, we can bound
\[
\| s^{-1} \mathrm F(s) \| \le C_{\mathrm F}(\sqrt c\,\sin\smallfrac\phi2)\, c^{\mu-1} \qquad \| a(s,t)\|\le t e^{c\,t} \| g'\|_t, \qquad s\in \Gamma_0
\]
and therefore
\begin{equation}\label{eq:6.5}
\Big\| \int_{\Gamma_0} s^{-1}\mathrm F(s) a(s,t)\mathrm d s\Big\|\le 2(\pi-\phi) C_{\mathrm F}(\sqrt c \sin\smallfrac\phi2)\,c^\mu\,e^{ct}\,t\,\|g'\|_t.
\end{equation}
In $\Gamma_+:=\{ z_+ (\rho): \rho\ge c\}$, we have
\[
|z_+(\rho)|=\rho, \quad |z_+'(\rho)|=1, \quad \mathrm{Re}\,z_+(\rho)^{1/2} =\sqrt\rho \sin\smallfrac\phi2 \ge \sqrt{c}\sin\smallfrac\phi2,\quad |\mathrm{Re}\,z_+(\rho)|=\rho \cos\phi,
\]
and therefore (the bound in $\Gamma_-$ can be done simultaneously)
\begin{eqnarray}\nonumber
\Big\| \int_{\Gamma_\pm} s^{-1}\mathrm F(s) a(s,t)\mathrm d s\Big\| &\le& C_{\mathrm F}(\sqrt c\sin\smallfrac\phi2) \| g'\|_t \frac1{\cos\phi} \int_c^\infty \theta^{\mu-2}\mathrm d \theta\\
&=& C_{\mathrm F}(\sqrt c\sin\smallfrac\phi2) \| g'\|_t \frac1{\cos\phi}\,\frac{c^{\mu-1}}{1-\mu}.\label{eq:6.6}
\end{eqnarray}
When $t\le 1$, we can take $c=1$ in \eqref{eq:6.5} and \eqref{eq:6.6} to bound
\[
\| (f*g)(t)\|\le 2 \Big(  (\pi-\phi) t+\frac1{(1-\mu)\cos\phi}\Big)\,C_{\mathrm F}(\sin\smallfrac\phi2)\,\|g'\|_t\qquad t \le 1.
\]
When $t\ge 1$, we take $c=1/t$ and obtain
\[
\| (f*g)\| \le 2 \Big(  (\pi-\phi) t+\frac1{(1-\mu)\cos\phi}\Big)t^{1-\mu}\,C_{\mathrm F}(t^{-1/2}\sin\smallfrac\phi2)\,\|g'\|_t\qquad t \ge 1.
\]
Using \eqref{eq:6.2} bound of the statement is established. Continuity of $f*g$ follows from the representation \eqref{eq:6.3} and the Dominated Convergence Theorem.

\section{An equivalent integral equation}\label{sec:AppB}

We start by describing the formulation (at the continuous and semidiscrete level) for the Brinkman equation. The aim of this formulation is to incorportate the restrictions for test and trial functions to be in $\mathbf H^{-1/2}_m(\Gamma)$ as part of the integral operator. In order to do this, we define the operator
\[
\widetilde{\mathrm V}(s):=\mathrm V(s)+\langle\punto,\mathbf m\rangle_\Gamma \mathbf m: \mathbf H^{-1/2}(\Gamma) \to \mathbf H^{1/2}(\Gamma). 
\]
This is the operator associated to the bilinear form
$
\langle\boldsymbol\mu,\mathrm V(s)\boldsymbol\lambda\rangle_\Gamma + \langle\boldsymbol\mu,\mathbf m\rangle_\Gamma \,\langle\boldsymbol\lambda,\mathbf m\rangle_\Gamma.
$

\begin{proposition}\label{prop:8.1}
Let $\boldsymbol\phi\in \mathbf H^{1/2}_n(\Gamma)$. Then
\[
\left.
\begin{array}{r}
	\mathrm V(s)\boldsymbol\lambda =\boldsymbol\phi \\
	\langle\boldsymbol\lambda,\mathbf m\rangle_\Gamma=0
\end{array}\right\} \quad \Longleftrightarrow \quad \widetilde{\mathrm V}(s)\boldsymbol\lambda=\boldsymbol\phi.
\]
Moreover $\widetilde{\mathrm V}(s): \mathbf H^{-1/2}(\Gamma) \to \mathbf H^{1/2}(\Gamma)$ is invertible for all $s\in \mathbb C_\star$ and
\[
\|\widetilde{\mathrm V}(s)^{-1}\|\le C\frac{|s|^{3/2}}{\omega\underline\omega^2}.
\]
\end{proposition}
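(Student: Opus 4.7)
My plan is to prove the equivalence first (which is essentially algebraic) and then obtain invertibility and the bound by reducing $\widetilde{\mathrm V}(s)$ to $\mathrm V(s):\mathbf H^{-1/2}_m(\Gamma)\to\mathbf H^{1/2}_n(\Gamma)$ through the stable decompositions of Lemma \ref{lemma:4.2}.

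For the equivalence, the implication $(\Rightarrow)$ is immediate: the rank-one perturbation $\langle\cdot,\mathbf m\rangle_\Gamma\mathbf m$ vanishes on $\mathbf H^{-1/2}_m(\Gamma)$. For $(\Leftarrow)$, I would test the identity $\widetilde{\mathrm V}(s)\boldsymbol\lambda=\boldsymbol\phi$ against $\mathbf n$ in the duality pairing. Using the symmetry of $\mathrm V(s)$ (Proposition \ref{prop:N3.3}(a)) and the fact that $\mathbf n\in\mathrm{Ker}\,\mathrm V(s)$ (Proposition \ref{prop:N3.3}(c)) gives $\langle\mathbf n,\mathrm V(s)\boldsymbol\lambda\rangle_\Gamma=\langle\boldsymbol\lambda,\mathrm V(s)\mathbf n\rangle_\Gamma=0$, while $\langle\mathbf n,\boldsymbol\phi\rangle_\Gamma=0$ by the hypothesis $\boldsymbol\phi\in\mathbf H^{1/2}_n(\Gamma)$. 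Hence $\langle\boldsymbol\lambda,\mathbf m\rangle_\Gamma\,\langle\mathbf n,\mathbf m\rangle_\Gamma=0$. The scalar $\langle\mathbf n,\mathbf m\rangle_\Gamma=\int_\Gamma\mathbf n\cdot\mathbf x=\int_{\Omega_-}\mathrm{div}\,\mathbf x=d\,|\Omega_-|$ is non-zero by the divergence theorem, so $\langle\boldsymbol\lambda,\mathbf m\rangle_\Gamma=0$ and consequently $\mathrm V(s)\boldsymbol\lambda=\boldsymbol\phi$.

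For invertibility, I would use Lemma \ref{lemma:4.2} to split both sides: $\boldsymbol\phi=\boldsymbol\phi_n+\beta\,\mathbf m$ with $\boldsymbol\phi_n\in\mathbf H^{1/2}_n(\Gamma)$, and search for $\boldsymbol\lambda=\boldsymbol\lambda_m+\alpha\,\mathbf n$ with $\boldsymbol\lambda_m\in\mathbf H^{-1/2}_m(\Gamma)$. The key preliminary observation is that $\mathrm V(s)\boldsymbol\lambda_m\in\mathbf H^{1/2}_n(\Gamma)$: indeed, by symmetry and Proposition \ref{prop:N3.3}(c), $\langle\mathbf n,\mathrm V(s)\boldsymbol\lambda_m\rangle_\Gamma=\langle\boldsymbol\lambda_m,\mathrm V(s)\mathbf n\rangle_\Gamma=0$. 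Since $\mathrm V(s)\mathbf n=\mathbf 0$ and $\langle\boldsymbol\lambda_m,\mathbf m\rangle_\Gamma=0$, the augmented operator decomposes as
\[
\widetilde{\mathrm V}(s)\boldsymbol\lambda = \mathrm V(s)\boldsymbol\lambda_m + \alpha\,\langle\mathbf n,\mathbf m\rangle_\Gamma\,\mathbf m,
\]
so the equation $\widetilde{\mathrm V}(s)\boldsymbol\lambda=\boldsymbol\phi$ decouples into $\mathrm V(s)\boldsymbol\lambda_m=\boldsymbol\phi_n$ (in $\mathbf H^{1/2}_n(\Gamma)$) and $\alpha\,\langle\mathbf n,\mathbf m\rangle_\Gamma=\beta$. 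The first is uniquely solvable by Proposition \ref{prop:N3.3}(d), and the second is a scalar equation with non-zero coefficient.

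For the norm bound, the stability of both decompositions from Lemma \ref{lemma:4.2} yields $\|\boldsymbol\phi_n\|_{1/2,\Gamma}+|\beta|\le C\|\boldsymbol\phi\|_{1/2,\Gamma}$ and $\|\boldsymbol\lambda\|_{-1/2,\Gamma}\le C(\|\boldsymbol\lambda_m\|_{-1/2,\Gamma}+|\alpha|)$. Combining with the bound $\|\mathrm V(s)^{-1}\|\le C|s|^{3/2}/(\omega\underline\omega^2)$ from the summary in \eqref{eq:5.6} and the scalar estimate $|\alpha|\le |\beta|/|\langle\mathbf n,\mathbf m\rangle_\Gamma|$ gives the desired bound on $\|\widetilde{\mathrm V}(s)^{-1}\|$, noting that $|s|^{3/2}/(\omega\underline\omega^2)\ge 1$ on $\mathbb C_\star$ so that the zeroth-order contribution from the $\mathbf m$-component is absorbed into the stated estimate.

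There is no serious obstacle: the only non-trivial ingredient beyond the already-established structural results is verifying $\langle\mathbf n,\mathbf m\rangle_\Gamma\neq 0$, which is immediate from the divergence theorem.
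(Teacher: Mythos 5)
Your argument is correct, but it takes a different route from the paper for the invertibility part. For the equivalence you test the augmented equation against $\mathbf n$ and use the symmetry of $\mathrm V(s)$ together with $\mathrm V(s)\mathbf n=\mathbf 0$ and $\langle\mathbf n,\mathbf m\rangle_\Gamma=d\,|\Omega_-|\neq 0$; the paper instead observes that $\mathrm V(s)$ maps into $\mathbf H^{1/2}_n(\Gamma)$ and invokes the directness of the splitting $\mathbf H^{1/2}(\Gamma)=\mathbf H^{1/2}_n(\Gamma)\oplus\mathrm{span}\{\mathbf m\}$ — these are essentially the same observation. The real difference is in the second part: you block-decouple $\widetilde{\mathrm V}(s)\boldsymbol\lambda=\boldsymbol\phi$ along the stable splittings of Lemma \ref{lemma:4.2} into the already-solved problem $\mathrm V(s)\boldsymbol\lambda_m=\boldsymbol\phi_n$ on $\mathbf H^{-1/2}_m(\Gamma)\to\mathbf H^{1/2}_n(\Gamma)$ plus a scalar equation with coefficient $\langle\mathbf n,\mathbf m\rangle_\Gamma$, and then assemble the bound using \eqref{eq:5.6} and the (correct) observation that $\omega\underline\omega^2\le\omega^3\le|s|^{3/2}$, so the zeroth-order scalar contribution is absorbed. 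The paper instead proves a \emph{coercivity} estimate for the augmented bilinear form, writing $\boldsymbol\lambda=\boldsymbol\lambda_0+c(\boldsymbol\lambda)\mathbf n$ with $c(\boldsymbol\lambda)=\langle\boldsymbol\lambda,\mathbf m\rangle_\Gamma/\langle\mathbf n,\mathbf m\rangle_\Gamma$ and adding the lower bound \eqref{eq:5.20} on $\mathbf H^{-1/2}_m(\Gamma)$ to the rank-one term $|c(\boldsymbol\lambda)|^2\langle\mathbf n,\mathbf m\rangle_\Gamma^2$, which yields $|\langle\overline{\boldsymbol\lambda},\widetilde{\mathrm V}(s)\boldsymbol\lambda\rangle_\Gamma|\ge C\,\omega\underline\omega^2|s|^{-3/2}\|\boldsymbol\lambda\|_{-1/2,\Gamma}^2$ and hence the inverse bound by a Lax--Milgram argument. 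Your decoupling is arguably more elementary and suffices for the proposition as stated, but the paper's coercivity buys more: it transfers verbatim to Galerkin discretizations on an arbitrary subspace $\mathbf X_h^+\subset\mathbf H^{-1/2}(\Gamma)$ (which is how the appendix justifies that "all the preceding bounds for the semidiscrete case can be easily translated"), whereas your splitting-based argument would only yield discrete stability when the discrete space respects the decomposition, e.g.\ when $\mathbf n\in\mathbf X_h^+$.
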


\begin{proof}
The first assertion is straightforward, given the fact that $\mathrm V(s)\boldsymbol\lambda\in \mathbf H^{1/2}_n(\Gamma)$ for all $\boldsymbol\lambda$. To prove invertibility we derive a coercivity estimate. The decomposition of Lemma \ref{lemma:4.2} can be done in the following way
\[
\boldsymbol\lambda=\boldsymbol\lambda_0+ c(\boldsymbol\lambda)\,\mathbf n \qquad c(\boldsymbol\lambda):=\frac{\langle\boldsymbol\lambda,\mathbf m\rangle_\Gamma}{\langle \mathbf n,\mathbf m\rangle_\Gamma}, \qquad \boldsymbol\lambda_0 \in \mathbf H^{-1/2}_m(\Gamma),
\]
and $|c(\boldsymbol\lambda)|+\|\boldsymbol\lambda_0\|_{-1/2,\Gamma}\equiv \|\boldsymbol\lambda\|_{-1/2,\Gamma}.$ It is then easy to show that
\[
\langle\overline{\boldsymbol\lambda},\mathrm V(s)\boldsymbol\lambda\rangle_\Gamma =\langle \overline{\boldsymbol\lambda_0},\mathrm V(s)\boldsymbol\lambda_0\rangle_\Gamma + |c(\boldsymbol\lambda)|^2 \langle\mathbf n,\mathbf m\rangle_\Gamma^2.
\]
By \eqref{eq:5.20}
\[
|s|^{1/2}|\langle\overline{\boldsymbol\lambda},\mathrm V(s)\boldsymbol\lambda\rangle_\Gamma| \ge \mathrm{Re}\,\langle \overline s^{1/2}\overline{\boldsymbol\lambda},\mathrm V(s)\boldsymbol\lambda \ge C \frac{\omega}{\alpha_2(s)^2}\|\boldsymbol\lambda_0\|_{-1/2,\Gamma}^2+ C \omega |c(\boldsymbol\lambda)|^2 
\]
and therefore (using that $\underline\omega\le |s|^{1/2}$ and the bounds \eqref{eq:5.2}),
\[
|\langle\overline{\boldsymbol\lambda},\mathrm V(s)\boldsymbol\lambda\rangle_\Gamma| \ge C\,\frac{\omega\underline\omega^2}{|s|^{3/2}}\Big( \|\boldsymbol\lambda_0\|_{-1/2,\Gamma}^2 + |c(\boldsymbol\lambda)|^2\Big),
\]
which finishes the proof.
\end{proof} 

For semidiscretization in space, we choose a finite dimensional space $\mathbf X_h^+ \subset \mathbf H^{-1/2}(\Gamma)$ such that $\mathbf n \in \mathbf X_h^+$. (In the case of polyhedral boundaries, this is easily verified if piecewise constant functions are elements of the space.) If we define the space $\mathbf X_h:=\mathbf X_h^+ \cap\mathbf H^{-1/2}_m(\Gamma)$, we have a stable decomposition $\mathbf X_h^+=\mathbf X_h\oplus \mathrm{span}\,\{\mathbf n\}$. The semidiscrete equations in the Laplace domain \eqref{eq:7.1c} are equivalent to
\[
\boldsymbol\lambda_h \in \mathbf X_h^+ \qquad \mbox{s.t.}\qquad \langle\boldsymbol\mu_h,\widetilde{\mathrm V}(s)\boldsymbol\lambda_h\rangle_\Gamma =\langle\boldsymbol\mu_h,\boldsymbol\phi\rangle_\Gamma \quad \forall \boldsymbol\mu_h \in \mathbf X_h^+.
\]
In the time domain, they correspond to looking for a causal function
$
\boldsymbol\lambda_h :\mathbb R \to \mathbf X_h^+ 
$
such that
\[
\langle \boldsymbol\mu_h,(\mathcal V*\lambda_h)(t)\rangle_\Gamma +\langle\boldsymbol\mu_h,\mathbf m\rangle_\Gamma
\langle\boldsymbol\lambda_h(t),\mathbf m\rangle_\Gamma=\langle \boldsymbol\mu_h,\boldsymbol\phi(t)\rangle_\Gamma \quad \forall \boldsymbol\mu_h \in \mathbf X_h^+, \quad \forall t.
\]
Because of Proposition \ref{prop:8.1}, all the preceding bounds for the semidiscrete case can be easily translated to this new formulation.

\end{document}